\pgfplotsset{compat=1.15}
\numberwithin{equation}{section}
\def\MR#1{}
\newtheorem{theorem}{Theorem}[section]
\newtheorem{lemma}[theorem]{Lemma}
\newtheorem{proposition}[theorem]{Proposition}
\newtheorem{corollary}[theorem]{Corollary}
\newtheorem{conjecture}[theorem]{Conjecture}
\theoremstyle{definition}
\newtheorem{definition}[theorem]{Definition}
\newtheorem{example}[theorem]{Example}
\newtheorem{remark}[theorem]{Remark}
\renewcommand{\vec}[1]{\bm{#1}}
\newcommand{\ed}[1]{e^*_{#1}}
\DeclareMathOperator{\res}{res}
\newcommand{\Ec}{\mathcal{E}}
\newcommand{\PP}{\mathbb{P}}
\newcommand{\ZZ}{\mathbb{Z}}
\newcommand{\QQ}{\mathbb{Q}}
\newcommand{\NQ}{N_{\QQ}}
\newcommand{\MQ}{M_{\QQ}}
\newcommand{\set}[1]{\left\{ #1 \right\}}
\newcommand{\setcond}[2]{\set{#1 \ \colon \ #2}}
\newcommand{\verts}{\mathcal{V}}
\newcommand{\Hc}{\mathcal{H}}
\newcommand{\Bc}{\mathcal{B}}
\newcommand{\Fc}{\mathcal{F}}
\newcommand{\dirbasket}{\overrightarrow{\Bc}}
\newcommand{\Rc}{\mathcal{R}}
\newcommand{\boundary}{\partial}
\newcommand{\Cc}{\mathcal{C}}
\newcommand{\hmin}{h_{\min}}
\newcommand{\hmax}{h_{\max}}
\newcommand{\elltilde}{\widetilde{\ell}}
\DeclareMathOperator{\conv}{conv}
\DeclareMathOperator{\cone}{cone}
\DeclareMathOperator{\Hom}{Hom}
\DeclareMathOperator{\strint}{int}
\DeclareMathOperator{\GL}{GL}
\DeclareMathOperator{\SL}{SL}
\DeclareMathOperator{\mut}{mut}
\DeclareMathOperator{\Vol}{Vol}
\DeclareMathOperator{\Ehr}{Ehr}
\DeclareMathOperator{\SC}{SC}
\DeclareMathOperator{\Aut}{Aut}
\newcommand{\isom}{\cong}
\newcommand{\congr}{\equiv}
\newcommand{\KE}{K{\"a}hler\nobreakdash--Einstein}
\begin{document}
\author[T.\,Hall]{Thomas~Hall}
\address{School of Mathematical Sciences\\University of Nottingham\\Nottingham\\NG7 2RD\\UK}
\email{pmyth1@nottingham.ac.uk}
\makeatletter
\@namedef{subjclassname@2020}{%
 \textup{2020} Mathematics Subject Classification}
\makeatother
\keywords{Combinatorial mutation, \KE\ polygons}
\subjclass[2020]{52B99 (Primary); 14J45 (Secondary)} 
\title{On the Uniqueness of K{\"a}hler--Einstein Polygons in Mutation-Equivalence Classes}
\begin{abstract}
We study a subclass of \KE\ Fano polygons and how they behave under mutation. The polygons of interest are \KE\ Fano triangles and symmetric Fano polygons. In particular, we find an explicit bound for the number of these polygons in an arbitrary mutation-equivalence class.
 
An important mutation-invariant of a Fano polygon is its singularity content. We extend the notion of singularity content and prove that it is still a mutation-invariant. We use this to show that if two symmetric Fano polygons are mutation-equivalent, then they are isomorphic. We further show that if two \KE\ triangles are mutation-equivalent, then they are isomorphic. Finally, we show that if a symmetric Fano polygon is mutation-equivalent to a \KE\ triangle, then they are isomorphic. Thus, each mutation-equivalence class has at most one Fano polygon which is either a \KE\ triangle or symmetric.

A recent conjecture states that all \KE\ Fano polygons are either triangles or are symmetric. We provide a counterexample~$P$ to this conjecture and discuss several of its properties. For instance, we compute iterated barycentric transformations of~$P$ and find that (a)~the \KE\ property is not preserved by the barycentric transformation, and (b)~$P$ is of strict type~$B_2$. Finally, we find examples of \KE\ Fano polygons which are not minimal.
\end{abstract}
\maketitle
\section{Introduction}
A new approach to the classification of Fano manifolds was recently initiated by Coates--Corti--Galkin--Golyshev--Kasprzyk~\cite{mirror_symmetric_fano_manifolds}. This has become known as the~\emph{Fanosearch programme}. Following this programme, a Laurent polynomial $f$ is said to be \emph{mirror dual} to a Fano manifold~$X$ if the classical period~$\pi_f$ of~$f$ agrees with the regularized quantum period~$\widehat{G}_X$ of~$X$; see~\cite{mirror_symmetric_fano_manifolds} for details. This correspondence is not unique: a Fano manifold can have (infinitely) many different mirror dual Laurent polynomials, and these Laurent polynomials are expected to be related via a combinatorial process called~\emph{mutation}~\cite{minkowski_mutation}. It is conjectured that Fano manifolds, up to~$\QQ$-Gorenstein\nobreakdash-deformation, are in bijective correspondence with certain Laurent polynomials, up to mutation.

Crucial to this approach is the Newton polytope~$P$ of the Laurent polynomial~$f$. This polytope~$P$ is~\emph{Fano}, i.e.\ it is a convex lattice polytope containing the origin in its strict interior, and has primitive vertices~\cite{FanoPoly}. We thus enter the world of toric geometry. The polytope~$P$ corresponds to a (possibily singular) toric Fano variety~$X_P$, which is expected to be a toric degeneration of the original Fano manifold~$X$. The notion of mutation can be extended from Laurent polynomials to Fano polytopes. Ilten showed that if two Fano polytopes~$P$ and~$Q$ are related by mutation, then the corresponding toric varieties~$X_P$ and~$X_Q$ are deformation-equivalent~\cite{mutation_deformations}.

In~\cite{mirror_symmetry_orbifolds}, the Fanosearch programme was specialised to orbifold del~Pezzo surfaces; that is, del~Pezzo surfaces with at worst cyclic quotient singularities. There are infinitely many of these surfaces, even up to $\QQ$-Gorenstein\nobreakdash-deformation; however, bounding the possible basket of singularities results in a finite classification. For example, an empty basket recovers the classical~$10$ smooth del Pezzo surfaces, up to $\QQ$-Gorenstein\nobreakdash-deformation.

Kasprzyk--Nill--Prince~\cite{minimality} introduced the notion of~\emph{minimality} for Fano polygons. A Fano polygon is called~\emph{minimal} if, amongst all polygons related to it by a single mutation, its volume is minimal. That is, minimality is a local property, and there can be multiple minimal polygons in a single mutation-equivalence class. Using this purely combinatorial definition, they found exactly~$26$ mutation-equivalence classes of Fano polygons whose baskets contain only~$\frac13(1,1)$ singularities; these agree with~$26$ of the del~Pezzo surfaces with~$\frac13(1,1)$ singularities classified by Corti--Heuberger~\cite{13sings}.

One of the main results of~\cite{minimality} is the following:
\begin{theorem}[\!{\cite[Theorem~6.3]{minimality}}]
There are finitely many minimal Fano polygons, up to isomorphism, with a given basket of singularities.
\end{theorem}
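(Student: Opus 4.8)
The plan is to reduce the statement to a bound on the volume. It suffices to produce a constant $C(\Bc)$, depending only on the basket $\Bc$, such that every minimal Fano polygon $P$ with basket $\Bc$ has $\Vol(P)\le C(\Bc)$; granting this, the theorem follows from the standard fact that, up to isomorphism, there are only finitely many Fano polygons of bounded volume.

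For the volume bound, let $\sigma_E$ be the cone spanned by an edge $E$ of $P$, i.e.\ the cone over $E$ with apex the origin, so that $\Vol(P)=\sum_E\det\sigma_E$ (normalised volume). Peeling the primitive $T$-cones off $\sigma_E$ leaves a residual cone $\sigma_E^{\mathrm{res}}$ --- equivalently, the part of the cyclic quotient singularity at the corresponding torus-fixed point of $X_P$ that does not admit a $\QQ$-Gorenstein smoothing --- and each such $\sigma_E^{\mathrm{res}}$ is one of the singularities recorded in $\Bc$. Hence $f(\Bc):=\sum_E\det\sigma_E^{\mathrm{res}}$ depends only on $\Bc$, and writing $g(P):=\sum_E\bigl(\det\sigma_E-\det\sigma_E^{\mathrm{res}}\bigr)\ge 0$ for the remaining ``$T$-volume'' we get $\Vol(P)=f(\Bc)+g(P)$. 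It therefore remains to bound $g(P)$ in terms of $\Bc$.

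Minimality is exploited through the behaviour of the volume under mutation. If $\mu=\mut_w(P,\Fc)$ for a primitive $w\in M$ and an admissible factor $\Fc$ --- a lattice segment on which $w$ is constant that divides the negative-height slices of $P$ appropriately --- then, setting $\hmin=\min_P w\le 0\le\hmax=\max_P w$, slicing $P$ along $w$ yields $\Vol(\mu)=\Vol(P)+\ell(\Fc)\bigl(\hmax^2-\hmin^2\bigr)$. Thus a minimal $P$ must satisfy $\hmax^2\ge\hmin^2$ for every $w$ admitting an admissible factor. Such directions are furnished by the $T$-structure itself: if the cone over an edge $E$ still carries a primitive $T$-cone, there is an edge-local mutation --- take $w$ to be the primitive inner normal of $E$ and $\Fc$ a length-one segment parallel to $E$ cut out by a primitive $T$-cone of $\sigma_E$ --- and for this $w$ one has $\hmin=-h_E$, attained along $E$, where $h_E$ is the lattice height of $E$ over the origin, while $\hmax=c_E$ is the lattice distance from the line supporting $E$ to the farthest vertex of $P$. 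Minimality then forces $h_E\le c_E$ at every edge carrying non-trivial $T$-content, whereas the residual-bearing edges have $h_E$ bounded in terms of $\Bc$. Substituting the resulting constraints back into $\Vol(P)=\sum_E h_E\ell_E$, with the lattice length $\ell_E$ governed by $h_E$ together with the number of primitive $T$-cones of $\sigma_E$, is meant to produce the bound $g(P)\le C'(\Bc)$.

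This last step is the crux. The inequalities $h_E\le c_E$ are almost circular, because $c_E$ is itself controlled only by the overall size of $P$, so the constraints at all edges must be combined; and $g(P)$ may be large not because a single edge is long but because many edges each carry a small amount of $T$-structure, so a volume-decreasing mutation has to be located in that case as well. The natural device is a minimal-counterexample argument: given $P$ with $g(P)$ exceeding the putative bound, exhibit an admissible mutation that strictly decreases $\Vol(P)$, contradicting minimality. Carrying this out requires the precise dictionary --- the combinatorial mechanism underlying mutation of singularity content --- between the decomposition of a cone $\sigma_E$ into primitive $T$-cones and a residual cone on the one hand, and the existence of admissible factors in the direction of the inner normal of $E$ on the other, in order to certify that the witnessing mutation really is admissible; and the latter is a condition on the shape of $P$ near $E$, not merely on $\sigma_E$ in isolation.
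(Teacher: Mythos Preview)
The paper does not contain a proof of this theorem: it is quoted in the introduction as \cite[Theorem~6.3]{minimality}, with no argument supplied here, so there is nothing in the present paper to compare your proposal against.

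On the proposal itself: the reduction to a volume bound and the decomposition $\Vol(P)=f(\Bc)+g(P)$ are sound, and the minimality criterion you invoke (that each long edge $E$ satisfies $h_E\le\hmax$ with respect to the inner normal of $E$) is exactly the characterisation recorded here as Lemma~\ref{lemma:min_poly_condition}. But you have explicitly left the crux unresolved: your final paragraph names the circularity in the inequalities $h_E\le c_E$ and proposes a minimal-counterexample device without executing it. Note that this device is close to tautological as stated---``if $P$ is too big, find a volume-decreasing mutation''---since the existence of a volume-decreasing mutation is precisely the negation of minimality, so invoking it yields no new leverage unless you can point to a \emph{specific} mutation forced by the hypothesis $g(P)>C'(\Bc)$. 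As written, then, this is a plan with a missing step rather than a proof. Closing it requires some global input beyond the per-edge inequalities: either the mutation-invariance of $\Vol(P^*)$, which ties all the $h_E$ together at once, or structural constraints on the configuration of long edges of a minimal polygon of the kind developed in \S\ref{subsec:constrain_cs}--\S\ref{subsec:constrain_3sym} of the present paper for the symmetric case.
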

\noindent
As a consequence, the problem of classifying Fano polygons (up to mutation) with a given basket can be reduced to classifying finitely many minimal Fano polygons; this has been implemented algorithmically by Cavey--Kutas~\cite{CaveyKutas}. In this paper, we seek to further understand the behaviour of some of these minimal Fano polygons.

Another concept in algebraic geometry which has seen much recent interest is K-stability. It was initially formulated by Tian~\cite{kstab_tian} as a way to characterise the existence of \KE\ metrics on Fano manifolds. K-stability has since been expanded by Donaldson~\cite{kstab_donaldson} to polarised varieties. The Yau--Tian--Donaldson conjecture predicts that a Fano variety~$X$ is K-polystable if and only if it admits a \KE\ metric. This was proven in the smooth case by Chen--Donaldson--Sun~\cite{YTD_conj1, YTD_conj2} and Tian~\cite{YTD_conj3}; the singular case has seen recent progress~\cite{YTD_recent1, YTD_recent2}, but remains open. Its main application is the construction of well-behaved moduli spaces, called K-moduli spaces, of Fano varieties~\cite{K-moduli1, K-moduli2, K-moduli3}.

We study toric \KE\ Fano varieties. Following Hwang--Kim~\cite{barycentric_transformations}, a Fano polygon~$P$ is called~\emph{\KE} if its corresponding toric variety~$X_P$ admits a \KE\ metric. A related class, introduced by Batyrev--Selivanova~\cite{bat_sym}, are~\emph{symmetric polygons}: these are polygons whose automorphism group fixes only the origin.
\begin{theorem}[\!{\cite[Theorem~1.1]{bat_sym},~\cite[Theorem~1.3]{on_KEs}}]\label{thm:syms_are_KE}
All symmetric polytopes are \KE.
\end{theorem}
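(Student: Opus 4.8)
The plan is to reduce the statement to the toric Kähler--Einstein criterion of Wang--Zhu, in its extension to singular (log Fano) toric varieties, and then to observe that the symmetry hypothesis forces the relevant barycentre to vanish. Recall that for a Fano polytope $P \subset \NQ$ the polar dual $P^{*} = \setcond{u \in \MQ}{\langle u, v\rangle \geq -1 \text{ for all } v \in P}$ is the moment polytope of the anticanonical polarisation of $X_{P}$, and that $X_{P}$ admits a \KE\ metric precisely when the barycentre $\operatorname{bc}(P^{*})$ is the origin. In the smooth case this is the theorem of Wang--Zhu; for the $\QQ$-Gorenstein toric Fano varieties attached to Fano polytopes one invokes the corresponding statement in the log/orbifold category, which is exactly the analytic input supplied by \cite{bat_sym} and \cite{on_KEs}.

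First I would record that the automorphism group $G \coloneqq \Aut(P) \leq \GL(N)$ is finite and, being a group of lattice automorphisms with $g(P) = P$ for all $g \in G$, acts on $M$ via the contragredient representation and preserves $P^{*}$: if $g(P) = P$ and $\langle u, v\rangle \geq -1$ for every $v \in P$, then $\langle (g^{-1})^{\top} u, v\rangle = \langle u, g^{-1} v\rangle \geq -1$ for every $v \in P$, since $g^{-1}(P) = P$. Because taking barycentres commutes with linear maps, $\operatorname{bc}(P^{*})$ is fixed by this action of $G$; that is, $\operatorname{bc}(P^{*})$ is a $G$-invariant vector of $\MQ$.

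Next I would unwind the hypothesis that $P$ is symmetric, namely that the only point of $\NQ$ fixed by all of $G$ is the origin, i.e.\ $(\NQ)^{G} = 0$. Over $\QQ$ a finite group and its contragredient have invariant subspaces of the same dimension, so $(\MQ)^{G} = 0$ as well; combined with the previous paragraph this forces $\operatorname{bc}(P^{*}) = 0$, whence $X_{P}$ is \KE.

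The barycentre-equivariance and the representation-theoretic bookkeeping are routine; the one point genuinely requiring care is invoking the Kähler--Einstein criterion in the correct generality, since the polytopes considered here yield honestly singular varieties rather than manifolds, so one must use the log Fano version of Wang--Zhu (as in \cite{on_KEs}) rather than the classical one. A more self-contained alternative would follow \cite{bat_sym} directly: average a reference Kähler metric over $G$ and run the continuity method, using that $G$-invariance annihilates the relevant obstruction along the path; the barycentre argument above is simply quicker to state.
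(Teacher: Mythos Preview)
The paper does not prove this theorem at all; it is quoted from the literature (\cite{bat_sym},~\cite{on_KEs}) as background, so there is no ``paper's own proof'' to compare against. That said, your argument is essentially correct, and in fact cleaner than you realise in the present context.

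Note that in this paper \KE\ is taken as a purely combinatorial condition (Definition~\ref{def:KE}): a Fano polygon $P$ is called \KE\ precisely when the barycentre of $P^{*}$ is the origin. With that definition in force, your middle two paragraphs already constitute a complete proof: $\Aut(P)$ acts on $M_{\QQ}$ via the contragredient representation and preserves $P^{*}$, hence fixes $\operatorname{bc}(P^{*})$; since $(\NQ)^{G}=0$ forces $(\MQ)^{G}=0$ for a finite group $G$, the barycentre must vanish. No appeal to Wang--Zhu, log Fano continuity methods, or any analytic input is needed here. The discussion of the singular Wang--Zhu criterion is the right thing to worry about in the original setting of \cite{bat_sym} and \cite{on_KEs}, where \KE\ means an honest metric, but in this paper that equivalence has been absorbed into the definition.
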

The converse to Theorem~\ref{thm:syms_are_KE} holds for smooth Fano polytopes in dimension less than seven. Nill--Paffenholz~\cite{nill_nonsym_KE} found that exactly one smooth Fano polytope in dimension seven, and exactly two smooth Fano polytopes in dimension eight, are \KE\ but not symmetric.

In this paper, we study symmetric Fano polygons and \KE\ Fano triangles. These were conjectured by Hwang--Kim~\cite{on_KEs} to constitute all \KE\ Fano polygons.
\begin{conjecture}[\!{\cite[Conjecture~1.6]{on_KEs}}]\label{conj:hk}
All \KE\ polygons are either symmetric or triangles.
\end{conjecture}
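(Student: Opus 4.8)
Since the displayed statement is Hwang--Kim's Conjecture~\ref{conj:hk}, and the abstract of this paper announces a counterexample, the ``proof'' I would propose is really a disproof: produce one Fano polygon that is \KE, is not a triangle, and is not symmetric. The plan is first to assemble the three combinatorial criteria involved. A Fano polygon $P$ is \KE\ exactly when $X_P$ carries a \KE\ metric, which by the standard criterion for toric \KE\ Fano surfaces holds if and only if the barycentre of the dual polygon $P^*$ (the polytope of $-K_{X_P}$) is the origin. Being a triangle just means $\#\verts(P)=3$. Being symmetric means that the fixed locus of $\Aut(P)$ in $\NQ$ equals $\{0\}$; in particular, as soon as $\Aut(P)$ is trivial the fixed locus is all of $\NQ$, so $P$ is \emph{not} symmetric. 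Thus the target reduces to: a Fano quadrilateral (or pentagon, etc.) with barycentre of $P^*$ at the origin whose automorphism group is trivial, or at worst generated by a single reflection.

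The most systematic route is a finite search supported by the finiteness results available here. By Theorem~6.3 of~\cite{minimality}, together with the extension of singularity content proved later in this paper, there are only finitely many minimal Fano polygons with a prescribed basket; I would enumerate minimal polygons of small singularity content, apply a bounded number of mutations to each, and for every polygon so obtained compute the barycentre of the dual. The \KE\ ones are those with dual barycentre $0$; among them I would discard the triangles and the symmetric polygons, and anything left is a counterexample. Moreover, by the uniqueness theorems established earlier in the paper, such a $P$ is mutation-equivalent neither to a symmetric polygon nor to a \KE\ triangle, so a single new mutation-equivalence class already does the job.

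A more hands-on alternative, which also explains why one should expect the conjecture to fail, starts from the observation that the \KE\ condition is essentially one barycentric equation: it carves out a large family of polygons, while the symmetric ones form a very thin subfamily, so non-symmetric \KE\ polygons ought to be abundant and the only real question is how small the smallest one is. Concretely I would fix a quadrilateral that is invariant under a single reflection whose mirror is a line through the origin --- hence automatically non-symmetric --- parametrise its vertices subject to the Fano conditions (primitive vertices, origin in the strict interior), use the reflection to force the dual barycentre onto the mirror, and then impose ``dual barycentre $=0$'' as essentially one scalar equation. Solving for a lattice solution, normalising modulo $\GL_2(\ZZ)$, and checking the Fano conditions yields a candidate $P$, which I would then analyse exactly as the paper does: compute its (extended) singularity content, iterate the barycentric transformation to test whether the \KE\ property persists, and pin down its strict type.

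The hard part will be the rigidity of the \KE\ condition together with excluding hidden isomorphisms. The barycentre-of-dual equation is genuinely restrictive, so the search has to be pushed far enough, or the hand construction has to use the reflection trick to make the equation solvable over $\ZZ$; and in the latter case one must still verify that the chosen shape does not degenerate (three vertices becoming collinear, say) and is not secretly $\GL_2(\ZZ)$-equivalent to a triangle or to a symmetric polygon. The remaining, softer obstacle is to certify --- via the mutation-invariance of extended singularity content --- that $P$ lies in a mutation class with no symmetric polygon and no \KE\ triangle, so that the counterexample is robust rather than an artefact of an unlucky representative.
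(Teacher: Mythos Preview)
Your diagnosis is right: the statement is a conjecture and what is wanted is a disproof by explicit counterexample. But what you have written is a search strategy, not a proof. You never actually produce a polygon and verify the three conditions, so as it stands there is a genuine gap: the counterexample is missing.

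Compare this with what the paper does. It simply writes down
\[
P=\conv\set{(-9,-190),\ (19,27),\ (15,113),\ (-13,112)},
\]
checks that the vertices are primitive and the origin is interior (so $P$ is Fano), computes with Magma that $\Aut(P)$ is trivial (hence $P$ is not symmetric), and then computes $P^*$ explicitly, splits it into two triangles, and checks by hand that the weighted average of their barycentres is the origin. That is the entire argument. The systematic backing is not your minimal-polygon enumeration but a description of the weight matrix of $P^*$ for a \KE\ quadrilateral (Proposition~\ref{prop:KE_quad_weights} and the parametrisation in Remark~\ref{remark:KE_quad}); the example above is then read off from a particular choice of parameters.

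Two of your proposed routes would run into trouble in practice. The enumeration via minimal polygons with fixed basket plus a bounded number of mutations is hopeless here: the edges of the actual counterexample have heights $3149,\,1739,\,871,\,481$ and lengths $1,2,2,7$, so $P$ has no long edges at all, sits alone in its mutation class, and would never surface in a small-basket search. Your reflection-symmetric ansatz is a reasonable heuristic, but the paper's example has \emph{trivial} automorphism group, and the paper's systematic method goes through weight systems of the dual rather than imposing a reflection on $P$. Finally, the last paragraph of your plan --- certifying that $P$ is not mutation-equivalent to a symmetric polygon or a \KE\ triangle --- is not needed to disprove Conjecture~\ref{conj:hk}; the conjecture concerns $P$ itself, not its mutation class.
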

We prove that symmetric Fano polygons and \KE\ Fano triangles are minimal (see, respectively, Lemmas~\ref{lemma:P_minimal} and~\ref{lemma:KE_triangle_minimal}). So, in each mutation-equivalence class, there is a finite number of these polygons. The main result of this paper is the following:
\begin{theorem}\label{thm:main}
In each mutation-equivalence class, there is at most one Fano polygon which is either symmetric or a \KE\ triangle.
\end{theorem}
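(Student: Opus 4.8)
The plan is to let the extended singularity content carry the whole argument. Recall that for a Fano polygon $P$ one attaches to each edge $E$ the cone spanned by $E$ and the origin, and this cone subdivides canonically into a string of primitive $T$-cones and a residual $R$-cone; the classical singularity content records only the total number $n$ of $T$-cones together with the multiset $\Bc$ of residual cyclic quotient singularities coming from the $R$-cones. The extended version introduced earlier additionally remembers the edgewise decompositions, recorded coarsely enough to stay a mutation invariant but finely enough for what follows; write $\SC(P)$ for this extended invariant. Thus if $P$ and $Q$ are mutation-equivalent then $\SC(P)=\SC(Q)$, and the theorem reduces to three reconstruction statements: (i)~two mutation-equivalent symmetric polygons are isomorphic; (ii)~two mutation-equivalent \KE\ triangles are isomorphic; and (iii)~a symmetric polygon that is mutation-equivalent to a \KE\ triangle is isomorphic to it. Any two polygons in a fixed mutation-equivalence class that are each symmetric or a \KE\ triangle then fall, after possibly interchanging them, under one of (i)--(iii), so all such polygons in the class are mutually isomorphic --- which is the claim. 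Note that, since $\Bc$ is a mutation invariant and symmetric polygons and \KE\ triangles are minimal (Lemmas~\ref{lemma:P_minimal} and~\ref{lemma:KE_triangle_minimal}), the finiteness theorem of~\cite{minimality} already shows there are only finitely many such polygons per class; the real work is to pass from finiteness to uniqueness, which is exactly where the refinement from the classical invariant to its extension is used.

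For~(i), the key input is that a symmetric polygon $P$ has $\Aut(P)$ fixing only the origin, so $\Aut(P)$ contains a rotation of order $m\in\set{2,3,4,6}$ acting freely on $\partial P$; thus the number of edges of $P$ is a multiple of $m$, and $P$ is recovered from $m$ together with a single fundamental edge-path consisting of a connected run of $1/m$ of its edges. I would show that $\SC(P)$ detects $m$ --- its edgewise data breaks into $m$ congruent blocks, and $m$ is recovered as the largest such splitting --- and that one block, together with primitivity of the vertices, reconstructs the fundamental edge-path; applying the rotation then rebuilds $P$ up to $\GL_2(\ZZ)$-equivalence. Minimality of $P$ (Lemma~\ref{lemma:P_minimal}) is what prevents the fundamental edge-path from being further subdivisible in a way that would make this reconstruction ambiguous.

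For~(ii), I would use the explicit description of \KE\ triangles. A Fano triangle $T$ determines a fake weighted projective plane $X_T$, and $X_T$ is \KE\ precisely when the barycentre of the polygon dual to $T$ is the origin; expressed in the weights of $X_T$ and the order of the quotient, this is an explicit arithmetic constraint. Substituting it into the formula for the edgewise singularity content of $T$, one checks that the assignment from normalised \KE\ triangle data to $\SC(T)$ is injective up to isomorphism, with Lemma~\ref{lemma:KE_triangle_minimal} supplying the normalisation; in many instances the classical pair $(n,\Bc)$ already pins $T$ down. When a \KE\ triangle is also symmetric --- for instance the triangles of $\PP^2$ and of $\PP^2/(\ZZ/3)$ --- the reconstruction procedures of~(i) and~(ii) return the same polygon, consistent with the theorem.

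Case~(iii) is the one I expect to be the main obstacle: there is no single parametrisation covering both a symmetric polygon and a \KE\ triangle, and one must preclude a symmetric polygon with many short edges from imitating the singularity content of a triangle with three long ones. The plan is to exploit that the edgewise part of $\SC(T)$ is supported on three edges, so any symmetric $P$ realising it has its boundary carved into at most three $\Aut(P)$-orbits' worth of combinatorial pieces; together with the rotation order forced by $\SC(P)$, this confines $P$ to the same fake weighted projective plane as $T$. Minimality of both $P$ and $T$ bounds how finely their edges subdivide, so only finitely many configurations need to be compared, and it is precisely the passage from the classical to the extended singularity content that separates the ones which would otherwise collide.
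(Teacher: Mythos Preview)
Your high-level decomposition into the three cases (i)--(iii) matches the paper exactly, and the final assembly is fine. The problem is in the execution of case~(i), where there is a genuine gap.

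You propose to reconstruct a symmetric polygon $P$ from $\SC(P)$ alone, arguing that one ``block'' of the extended singularity content determines a fundamental edge-path. But the directed singularity content, as defined in the paper, records only the residual $R$-cones in cyclic order --- it does \emph{not} record how many primitive $T$-singularities sit on each edge, nor the heights of the pure $T$-edges. Two symmetric polygons with the same directed basket could in principle distribute their $T$-singularities across edges of different heights, so your claim that ``one block, together with primitivity, reconstructs the fundamental edge-path'' is not justified. The quantity that \emph{does} determine $P$ is the full edge data $\Ec(P)$ (Hermite normal forms of all edges, long and short), but this is \emph{not} a mutation invariant.

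The paper closes this gap with substantial extra work that your proposal omits entirely. It brings in a second mutation invariant, the Ehrhart series $\Ehr_{P^*}(t)$, and shows (Propositions~\ref{prop:heights_fixed_cs} and~\ref{prop:heights_fixed_3_sym}) that for a symmetric polygon with non-empty basket the first few coefficients of this series read off the heights of the long edges and force there to be at most two pairs (centrally symmetric case) or one triple ($3$-symmetric case) of them. Only then can one prove the delicate inductive statement (Proposition~\ref{prop:ESC_same}) that mutation-equivalent symmetric polygons have equal edge data, after which reconstruction (Proposition~\ref{prop:one_cs_per_extended_basket}) goes through. Your sketch for~(iii) has a similar vagueness: the paper's arguments there are a parity count (centrally symmetric forces an even number of $R$-singularities, incompatible with a triangle unless a divisibility contradiction via Lemma~\ref{lemma:KE_odd} arises) and, in the $3$-symmetric case, the observation that all three edges of the triangle are forced to the same height, whence Lemma~\ref{lemma:same_height_KE} applies. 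None of this is visible from $\SC$ alone.
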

\noindent
Both behaviours are exhibited: there are examples of mutation-equivalence classes with no symmetric Fano polygons or \KE\ triangles (Example~\ref{eg:no_KE}) and examples with exactly one (Example~\ref{eg:one_KE}).

In~\S\ref{sec:proof_of_countereg} we provide a counterexample to Conjecture~\ref{conj:hk}:
\begin{proposition}\label{prop:countereg}
The Fano polygon
\[
P \coloneqq \conv\set{(-9, -190), (19,27), (15,113), (-13,112)} \subset \NQ
\]
is a \KE\ Fano quadrilateral. In particular, Conjecture~\ref{conj:hk} does not hold.
\end{proposition}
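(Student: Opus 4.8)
The plan is to verify the two claims separately: first that $P$ is a Fano quadrilateral, and then that it is \KE. The first claim is essentially a finite check: I would confirm that the four listed points are the vertices of their convex hull (no three collinear, none redundant), that each is a primitive lattice vector, and that the origin lies in the strict interior. The origin being interior can be checked by writing $\vec{0}$ as a strictly positive convex combination of the four vertices, or equivalently by computing the inner normals of the four edges and checking each edge's defining inequality is strict at the origin; this also certifies that $P$ is a quadrilateral rather than a triangle.

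For the \KE\ claim, the natural tool is the toric criterion: a toric Fano variety $X_P$ admits a \KE\ metric if and only if the barycenter of the dual (moment) polytope $P^*$ is the origin, equivalently if and only if the barycenter of the spanning polygon — in the normalization used by Hwang--Kim~\cite{barycentric_transformations} — coincides with the origin. Concretely, I would compute $P^* = \setcond{u \in \MQ}{\langle u, v\rangle \ge -1 \text{ for all } v \in P} \subset \MQ$, which is again a quadrilateral whose vertices are the solutions of the four pairs of edge equations of $P$; then I would compute its barycenter by triangulating from one vertex (or from the origin), summing the signed contributions $\tfrac13(\text{area})(\text{centroid of triangle})$, and check that the result is $\vec{0}$. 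Since all the data are rational, this is an exact computation.

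The step I expect to be the real content — not hard, but the crux — is the barycenter-of-the-dual calculation: the vertices of $P$ were presumably engineered precisely so that $P^*$ has vanishing barycenter while $P$ itself is neither symmetric nor a triangle, so there is no conceptual shortcut and one simply has to carry out the rational arithmetic carefully. I would double-check by an independent route, e.g.\ computing the barycenter of $P^*$ via the formula expressing it in terms of the Ehrhart data or via the boundary-integral (divergence-theorem) formula over the edges of $P^*$, to guard against arithmetic slips. Finally, having shown $P$ is \KE, the "in particular" clause is immediate: one checks that $P$ has a trivial (or at least origin-fixing-only-in-the-trivial-way) automorphism group so that $P$ is not symmetric — e.g.\ by noting the four edge lattice lengths, or the four vertex "local indices," are pairwise distinct, so $\Aut(P)$ cannot act transitively enough to be symmetric — and it is visibly a quadrilateral, not a triangle, contradicting Conjecture~\ref{conj:hk}.
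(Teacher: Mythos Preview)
Your proposal is correct and essentially mirrors the paper's own proof: verify primitivity of the vertices and that $\vec 0\in\strint(P)$, compute $P^*$ explicitly, triangulate it into two triangles and check that the area-weighted sum of their centroids is $\vec 0$, and show $P$ is not symmetric by checking $\Aut(P)$ is trivial. One small caveat on your last step: the four edge lattice lengths of $P$ are $1,2,7,2$, so they are \emph{not} pairwise distinct; you should instead use the edge heights $3149,871,481,1739$ (or compute $\Aut(P)$ directly, as the paper does) to rule out any nontrivial automorphism.
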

\noindent
All Fano quadrilaterals with barycentre zero are described in Proposition~\ref{prop:KE_quad_weights}. This allows us to present a method for systematically producing non-symmetric \KE\ quadrilaterals; see Remark~\ref{remark:KE_quad}.

As noted above, both symmetric Fano polygons and \KE\ Fano triangles are minimal. Furthermore, all the non-symmetric \KE\ Fano quadrilateral examples we have found are minimal. It is natural to ask whether this holds more generally for all \KE\ Fano polygons, however this is not the case:
\begin{proposition}[see Proposition~\ref{prop:KE_nonsym_from_3sym}]\label{prop:non_minimal_KE}
There exist \KE\ Fano polygons which are not minimal.
\end{proposition}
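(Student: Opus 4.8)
The plan is to exhibit an explicit \KE\ Fano polygon that fails to be minimal; by the definition of minimality, it suffices to produce one such polygon together with a single mutation that strictly decreases its volume. The reference to Proposition~\ref{prop:KE_nonsym_from_3sym} in the statement suggests the right source of examples: take a symmetric Fano polygon $Q$ whose automorphism group has order~$3$ (a ``$3$-symmetric'' polygon), and mutate it once to obtain a polygon $P$. By Theorem~\ref{thm:syms_are_KE}, $Q$ is \KE, and since the \KE\ property is a mutation-invariant for toric varieties (mutation-equivalent Fano polygons give deformation-equivalent, hence simultaneously \KE\ or non-\KE, toric del~Pezzo surfaces), $P$ is also \KE. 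The point of choosing a $3$-symmetric rather than fully symmetric $Q$ is that its threefold symmetry need not be compatible with the chosen mutation direction, so $P$ will generically not be symmetric; more importantly, $P$ need not inherit the local volume-minimality of $Q$.

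First I would fix a concrete $3$-symmetric Fano polygon $Q$ — for instance one whose singularity content consists of a controlled combination of cyclic quotient singularities permuted by the order-$3$ automorphism — and record $\Vol(Q)$. Second I would choose a primitive vector $\vec{w} \in M$ and a suitable width function (equivalently, a factor $F$ on the hyperplane $\{\vec{w} = -1\}$) defining a mutation $\mu = \mut(\vec{w}, F)$ applicable to $Q$, and compute $P \coloneqq \mu(Q)$ explicitly as the convex hull of finitely many lattice points, verifying it is again Fano. Third, and this is the crux, I would show that $P$ is \emph{not} minimal: I would exhibit a single further mutation $\nu$ such that $\Vol(\nu(P)) < \Vol(P)$. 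The natural candidate is essentially the inverse mutation $\mu^{-1}$ (which returns $Q$, so that $\Vol(\mu^{-1}(P)) = \Vol(Q)$), so the whole computation reduces to checking the single numerical inequality $\Vol(Q) < \Vol(P)$. This will hold precisely when the mutation $\mu$ from $Q$ to $P$ increases volume, which one arranges by choosing the factor $F$ so that the ``added'' part of the polygon (in the positive-$\vec{w}$ region) has larger area than the ``removed'' part.

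The main obstacle is not conceptual but bookkeeping: one must pin down an explicit $Q$, $\vec{w}$ and $F$ for which all three conditions hold simultaneously — $Q$ genuinely $3$-symmetric (so Theorem~\ref{thm:syms_are_KE} applies and $P$ is \KE), the mutation $\mu$ well-defined on $Q$ (the width-zero constraint on the relevant edge of $Q$ must admit the chosen $F$ as a Minkowski summand), and the volume strictly increasing under $\mu$. I expect a small search among low-index $3$-symmetric triangles or hexagons, together with mutations along a vertex direction, to produce such a pair quickly; the verification that the resulting $P$ is Fano and that $\Vol(P) > \Vol(Q)$ is then a direct finite computation, after which minimality of $P$ fails by the witnessing mutation $\mu^{-1}$. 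I would close by recording $P$ explicitly (as in Proposition~\ref{prop:countereg}) and noting, as a sanity check, that $P$ is necessarily non-symmetric — otherwise Lemma~\ref{lemma:P_minimal} would force it to be minimal, a contradiction.
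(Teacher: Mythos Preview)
Your proposal has a fatal gap: the \KE\ property is \emph{not} a mutation-invariant, so the step ``$Q$ is \KE, hence $P = \mu(Q)$ is \KE'' fails. The justification you give --- that mutation-equivalent Fano polygons yield deformation-equivalent toric varieties, and hence are simultaneously \KE\ or not --- conflates the \KE\ property of a fixed (singular) toric variety $X_P$ with properties of a $\QQ$-Gorenstein family it sits in. By Definition~\ref{def:KE}, $P$ is \KE\ precisely when the barycentre of $P^*$ is the origin; under mutation, $P^*$ is transformed by a piecewise-linear map which preserves the Ehrhart series but not the barycentre. Concretely, take $Q$ the triangle for $\PP^2$: it is \KE\ with weights $(1,1,1)$, but any non-trivial mutation of it is a Markov triangle with weights $(a^2,b^2,c^2) \neq (1,1,1)$, hence not \KE\ by Lemma~\ref{lemma:KE_triangle_weights}. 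So your construction produces polygons $P$ that are typically \emph{not} \KE, and the argument collapses. (Indeed, if your argument worked, then \emph{every} mutation of a symmetric polygon that increases volume would be a non-minimal \KE\ polygon, which would make the whole question trivial.)

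The paper's approach is quite different and avoids this trap: instead of mutating, it refines the lattice. Starting from a $3$-symmetric hexagon $P_{h,1}$, one passes to a finer lattice $N + \tfrac{1}{k}(1,0)\ZZ$ to obtain $P_{h,k}$. The point is that lattice refinement preserves the weight system of $P$ (and of $P^*$), and hence preserves the barycentre of $P^*$; so $P_{h,k}$ remains \KE. At the same time, the refinement destroys the order-$3$ symmetry (the edge lengths no longer repeat three times) and, for $k \ge 2h-1$, it makes the top edge long with $|\hmin| = 2h-1 > h = \hmax$, violating the criterion of Lemma~\ref{lemma:min_poly_condition}. Thus the operation you need is a change of lattice, not a mutation.
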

We now summarise the structure of this paper. In~\S\ref{section:prelims}, we fix our notation for the objects of study: polygons and mutation. We then extend the notion of basket of singularities to suit our needs. The next two sections are dedicated to proving Theorem~\ref{thm:main}. In~\S\ref{section:most_one_cs_fano}, we prove Theorem~\ref{thm:sym_unique}, which states that there is at most one symmetric Fano polygon in each mutation-equivalence class. In~\S\ref{section:KE-triangles}, we prove the rest of Theorem~\ref{thm:main}. In particular, we prove that there is at most one \KE\ Fano triangle in each mutation-equivalence class and that if a symmetric Fano polygon is mutation-equivalent to a \KE\ triangle, then they are isomorphic. We conclude with~\S\ref{section:countereg}, where we provide a counterexample to Conjecture~\ref{conj:hk}. We compute iterated barycentric transformations of~$P$, and show that it has strict type~$B_2$. Finally, we describe an alternative method for constructing non-symmetric \KE\ Fano polygons and use it to show that there are \KE\ Fano polygons which are not minimal (Proposition~\ref{prop:KE_nonsym_from_3sym}).
\section{Preliminaries}\label{section:prelims}
\subsection{Lattice polygons and basic invariants}
Throughout, we refer to the lattice~$N \isom \ZZ^2$ and its dual lattice~$M \coloneqq \Hom(N, \ZZ)$. We also refer to their~$\QQ$-extensions as~$\NQ \coloneqq N \otimes \QQ$ and~$\MQ \coloneqq M \otimes \QQ$. For elements~$u \in \MQ$ and~$\vec{x} \in \NQ$, we denote their natural pairing as~$u(\vec{x})$.

We often consider polygons up to~$\GL_2(\ZZ)$-equivalence. Let~$P, Q \subset \NQ$ be lattice polytopes. If there exists some~$U \in \GL_2(\ZZ)$ such that~$Q = UP$, then we say that~$P$ is~\emph{isomorphic} to~$Q$. A subtle point used throughout this paper is to also consider polygons only up to~$\SL_2(\ZZ)$-equivalence, since several properties we consider are only invariant under~$\SL_2(\ZZ)$-transformations and not~$\GL_2(\ZZ)$-transformations.

We first recall the definition of a Fano polygon. Each Fano polygon~$P \subset \NQ$ (up to~$\GL_2(\ZZ)$-equivalence) corresponds one-to-one with a normal toric Fano variety~$X_P$ (up to isomorphism).
\begin{definition}
Let~$P \subset \NQ$ be a lattice polygon, i.e.\ a polygon whose vertices are in the lattice~$N$. It is called~\emph{Fano} if the origin~$\vec{0}$ is in the strict interior~$\strint(P)$ of~$P$ and all its vertices~$\vec{v} = (x,y)$ are~\emph{primitive}, i.e.\ $\gcd(x,y) = 1$.
\end{definition}
It will be important later, when considering mutations, to have the notion of length and height of an edge of a Fano polygon.
\begin{definition}
Let~$E$ be an edge of a Fano polygon and let~$u \in M$ be the unique primitive inner normal vector to~$E$. Then we say that~$E$ has~\emph{(lattice) length}~$\ell_E \coloneqq |E \cap N| - 1$ and~\emph{height}~$h_E \coloneqq |u(\vec{v})|$, where~$\vec{v}$ is a vertex of~$E$. Note that the length and the height will be strictly positive integers.
\end{definition}
A classical way to describe a cone or an edge of a Fano polygon is by its singularity type~$\frac{1}{r}(1,a)$, which is equivalently a matrix in Hermite normal form~$\left(\begin{smallmatrix}
 1 & -a \\
 0 & r
\end{smallmatrix}\right)$.
It is invariant under~$\GL_2(\ZZ)$-transformations. So, it throws away information; in particular, the orientation of the edge. But knowing how the edges of our polygons are oriented will be vital in several proofs in this paper. Thus, we modify the notion of Hermite normal form so that it keeps the orientation information.
\begin{definition}
Let~$\sigma \subset \NQ$ be a (pointed, full-dimensional) cone. Denote the primitive ray generators of~$\sigma$ by~$\vec{v}_0$ and~$\vec{v}_1$, going anticlockwise. Let~$A_{\sigma}$ be the matrix with left column~$\vec{v}_0$ and right column~$\vec{v}_1$. Let~$H_{\sigma}$ be the (row-style) Hermite normal form of~$A_{\sigma}$. We call~$H_{\sigma}$ the~\emph{Hermite normal form} of the cone~$\sigma$. Equivalently, we can speak of the~\emph{Hermite normal form}~$H_E$ of an edge~$E$ of a Fano polygon by passing to the cone over~$E$.
\end{definition}
Note that the Hermite normal form of~$\sigma$ will be of the form~$H_{\sigma} =
\left(\begin{smallmatrix}
 1 & a \\
 0 & r
\end{smallmatrix}\right)$, for some~$r,a \in \ZZ$ with~$0 \le a < r$. We know that the first column of~$H_\sigma$ must be~$(1,0)$, since the ray generators of~$\sigma$ are primitive. For the same reason,~$a$ and~$r$ must be coprime.

We now describe the exact behaviour of our orientation-preserving Hermite normal form; in particular, we describe how it behaves when we apply unimodular transformations to the cone (or edge).
\begin{lemma}
Let~$\sigma \subset \NQ$ be a pointed, full-dimensional cone with Hermite normal form~$H_\sigma = \left(\begin{smallmatrix}
 1 & a \\
 0 & r
 \end{smallmatrix}\right)$. Let~$G$ be a unimodular matrix. If~$G$ has determinant~$1$, then~$G\sigma$ has Hermite normal form~$H_{G\sigma} = H_{\sigma}$. Instead suppose that~$G$ has determinant~$-1$. Then~$G\sigma$ has Hermite normal form~$H_{G\sigma} = \left(\begin{smallmatrix}
 1 & a^* \\
 0 & r
\end{smallmatrix}\right)$, where~$a a^* \congr 1 \mod r$.
\end{lemma}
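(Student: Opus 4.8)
The plan is to reduce everything to an explicit computation with $2\times 2$ integer matrices. Write $\sigma$ with anticlockwise primitive ray generators $\vec{v}_0, \vec{v}_1$, so $A_\sigma = (\vec{v}_0 \mid \vec{v}_1)$ and $H_\sigma = A_\sigma R$ for some $R \in \GL_2(\ZZ)$ arising from the Hermite reduction (acting by column operations on the right). Here $H_\sigma = \left(\begin{smallmatrix} 1 & a \\ 0 & r \end{smallmatrix}\right)$ with $0 \le a < r$ and $\gcd(a,r)=1$. The key structural point is that $\det A_\sigma = r > 0$ because the generators are listed anticlockwise, so $R$ must have $\det R = +1$.

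First I would treat the case $\det G = 1$. Then $G$ preserves the anticlockwise ordering, so the ordered ray generators of $G\sigma$ are $G\vec{v}_0, G\vec{v}_1$, giving $A_{G\sigma} = G A_\sigma$. Since $G \in \SL_2(\ZZ)$ acts by left multiplication (row operations) and Hermite normal form is precisely the canonical form under left multiplication by $\SL_2(\ZZ)$ — once we fix the positive-determinant normalisation $r>0$, which is preserved since $\det(GA_\sigma) = \det A_\sigma = r$ — we get $H_{G\sigma} = H_\sigma$. Here one should be slightly careful to confirm that the version of Hermite normal form in use (lower-left zero, $0 \le a < r$, $r > 0$) is genuinely a complete invariant for the $\SL_2(\ZZ)$-action on integer matrices of positive determinant; this is standard but worth a sentence.

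Now the case $\det G = -1$. Such a $G$ reverses orientation, so the anticlockwise-ordered generators of $G\sigma$ are $G\vec{v}_1, G\vec{v}_0$ (swapped), whence $A_{G\sigma} = G A_\sigma S$ where $S = \left(\begin{smallmatrix} 0 & 1 \\ 1 & 0 \end{smallmatrix}\right)$ swaps the columns. Then $A_{G\sigma} = (GR^{-1}) H_\sigma S$, and $\det(GR^{-1}) = -1 \cdot 1 = -1$... so I instead compute $H_\sigma S = \left(\begin{smallmatrix} a & 1 \\ r & 0 \end{smallmatrix}\right)$ and reduce this to Hermite form by left multiplication. Since the whole matrix $A_{G\sigma}$ has determinant $(-1)(r)(- 1) = r > 0$, its Hermite form is $\left(\begin{smallmatrix} 1 & a^* \\ 0 & r \end{smallmatrix}\right)$ for some $0 \le a^* < r$, and it suffices to identify $a^*$. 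The cleanest route is: the second column of $H_\sigma S$ is $(1,0)^T$; pick integers $p,q$ with $pa - qr \cdot(\text{something})$ — more precisely, since $\gcd(a,r)=1$ choose $a', k$ with $a a' + r k = 1$, i.e. $a a' \equiv 1 \pmod r$, and left-multiply $H_\sigma S$ by the $\SL_2(\ZZ)$ matrix $\left(\begin{smallmatrix} a' & k \\ -r & a \end{smallmatrix}\right)$ (which has determinant $a' a + k r = 1$). A direct multiplication gives $\left(\begin{smallmatrix} a' & k \\ -r & a \end{smallmatrix}\right)\left(\begin{smallmatrix} a & 1 \\ r & 0 \end{smallmatrix}\right) = \left(\begin{smallmatrix} a'a + kr & a' \\ -ra + ar & -r \end{smallmatrix}\right) = \left(\begin{smallmatrix} 1 & a' \\ 0 & -r \end{smallmatrix}\right)$, then negate the bottom row by $\left(\begin{smallmatrix}1 & 0 \\ 0 & -1\end{smallmatrix}\right)$ (determinant $-1$, but combined with the earlier $-1$'s the total left factor still has determinant... ) — here I would instead bundle the sign bookkeeping by noting we only need the Hermite form up to the fixed normalisation, and reduce $a'$ mod $r$ if necessary to land in $[0,r)$. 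Setting $a^* \equiv a' \pmod r$ gives $a a^* \equiv 1 \pmod r$, as claimed.

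The main obstacle is purely one of sign/orientation bookkeeping: ensuring that the determinant of the left-multiplying matrix produced by the Hermite algorithm is correctly tracked so that the normalisation $r > 0$ (rather than $r < 0$) is maintained, and that the column swap $S$ is inserted in the right case. Once the observation "$\det A_\sigma = r > 0$ because generators are anticlockwise" is in place, and one uses that Hermite normal form with the positivity convention is a complete invariant for left $\SL_2(\ZZ)$-multiplication on positive-determinant matrices, the two cases fall out of the two displayed matrix identities with essentially no further work. I would present the $\det G = 1$ case in one short paragraph and devote the bulk of the proof to carefully computing $a^*$ in the $\det G = -1$ case via the modular-inverse matrix above.
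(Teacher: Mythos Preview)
Your approach is correct and essentially the same as the paper's: both cases reduce to computing the Hermite normal form of an explicit $2\times 2$ matrix, and the $\det G = -1$ case is resolved by producing the modular inverse of $a$ via B\'ezout. One small slip: you write $H_\sigma = A_\sigma R$ (column operations) at the outset but then correctly use left multiplication throughout; the paper's convention is the latter, $H_\sigma = U A_\sigma$.

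The only notable difference is in how the $\det G = -1$ case is set up. You work directly with $A_{G\sigma} = G A_\sigma S$ and must carry along the stray $\det = -1$ factor from $GR^{-1}$ through the computation, which is exactly the ``sign/orientation bookkeeping'' you flag as the main obstacle. The paper sidesteps this by first applying the swap matrix to form $\sigma' = \left(\begin{smallmatrix}0&1\\1&0\end{smallmatrix}\right) G\sigma$, which is $\SL_2(\ZZ)$-equivalent to $\sigma$ and hence (by the already-proved first case) has Hermite form $H_\sigma$; swapping back then gives $A_{G\sigma}$ as $\SL_2(\ZZ)$-equivalent to the concrete matrix $\left(\begin{smallmatrix} r & 0 \\ a & 1 \end{smallmatrix}\right)$, whose Hermite form is read off directly. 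This trick absorbs all the sign tracking into one clean reduction and would tighten your write-up considerably.
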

\begin{proof}
First, suppose that~$G$ has determinant~$1$. So,~$G$ preserves the orientation of the ray generators of~$\sigma$. Thus,~$A_{G\sigma} = GA_{\sigma}$. Since the Hermite normal form of matrices is invariant under unimodular transformation, it follows that~$H_{G\sigma} = H_\sigma$.

Now, suppose that~$G$ has determinant~$-1$. Consider the cone~$\sigma' \coloneqq \left(\begin{smallmatrix}
 0 & 1 \\
 1 & 0
\end{smallmatrix}\right)G\sigma$. This is~$\SL_2(\ZZ)$-equivalent to~$\sigma$, and thus has the same Hermite normal form~$H_\sigma$. In particular,~$\sigma'$ can be transformed so that its primitive ray generators are the columns of~$H_\sigma$, which are~$(1,0)$ and~$(a, r)$. Applying the matrix~$\left(\begin{smallmatrix}
 0 & 1 \\
 1 & 0
\end{smallmatrix}\right)$ again, we see that~$G\sigma$ can be transformed so that it has primitive ray generators~$(r,a)$ and~$(0,1)$.
 
We now aim to compute the Hermite normal form of the matrix~$A \coloneqq \left(\begin{smallmatrix}
 r & 0 \\
 a & 1
\end{smallmatrix}\right)$. There is some unimodular matrix~$U$ such that~$A = U\left(\begin{smallmatrix}
 1 & a^* \\
 0 & r
\end{smallmatrix}\right)$ for some~$a^* \in \ZZ$ with~$0 \le a^* < r$. The first column of~$U$ must be~$(r,a)$. So, if the second column of~$U$ is~$(x,y)$, then we must satisfy~$r(a^* + x) = 0$ and~$a a^* + rw = 1$. In particular, in the second equation, notice that this gives~$a a^* \congr 1 \mod r$. Thus, since~$G\sigma$ is~$SL_2(\ZZ)$-equivalent to~$A$, the Hermite normal form of~$G\sigma$ is as described.
\end{proof}
We can recover the singularity type of~$\sigma$ from its associated Hermite normal form~$\left(\begin{smallmatrix}
 1 & a \\
 0 & r
\end{smallmatrix}\right)$. The singularity type is simply~$\frac{1}{r}(1,-a)$. Since cones are usually considered only up to~$\GL_2(\ZZ)$-equivalence, we can also get a Hermite normal form of~$\left(\begin{smallmatrix}
 1 & a^* \\
 0 & r
\end{smallmatrix}\right)$, where~$aa^* \congr 1 \mod r$, which results in a singularity type of~$\frac{1}{r}(1,-a^*)$. This is consistent, since these two singularities are in fact indistinguishable in the world of algebraic geometry.
\subsection{Mutations of lattice polygons}
Before we define the important notion of mutation, we introduce some notation. Let~$P \subset \NQ$ be Fano polytope and fix a primitive linear form~$w \in M$. Denote by~$N_h$ the~$h$-th graded piece of~$N$ with respect to~$w$, i.e.\ $N_h \coloneqq \setcond{\vec{x} \in N}{w(\vec{x}) = h}$. Denote by~$P_h$ the~$h$-th graded piece of~$P$ with respect to~$w$, i.e.\ $P_h \coloneqq \conv(P \cap N_h)$. We denote by~$\verts(P)$ the set of vertices of~$P$, and by~$\verts(P)_h$ the set of vertices of~$P$ at height~$h$ with respect to~$w$, i.e.\ $\verts(P)_h \coloneqq \verts(P) \cap N_h$. We write~$\hmin \coloneqq \min_{\vec{x} \in P} w(\vec{x})$ and~$\hmax \coloneqq \max_{\vec{x} \in P} w(\vec{x})$ so that all non-empty graded pieces of~$P$ with respect to~$w$ are contained in the list~$P_{\hmin}, P_{\hmin+1}, \ldots, P_{\hmax}$.
\begin{definition}[\!{\cite[Definition~3.3]{minkowski_mutation}}]
Let~$F \subset \NQ$ be a lattice polytope. Then~$F$ is said to be a~\emph{factor of~$P$ with respect to~$w$} if~$w(F) = \set{0}$ and, for all integers in the range~$\hmin \leq h < 0$, there exists a (possibly empty) lattice polytope~$R_h \subset \NQ$ such that~$\verts(P)_h \subseteq R_h + |h|F \subseteq P_h$. We refer to these lattice polytopes~$R_h$ as remainders.
\end{definition}
\begin{definition}[\!{\cite[Definition~3.3]{minkowski_mutation}}]
Let~$F \subset \NQ$ be a factor of~$P$ with respect to~$w$ with choice of remainders~$\Rc = \set{R_{\hmin}, \ldots, R_{-1}}$. Then the~\emph{mutation} of~$P$ with respect to~$w$ and~$F$ (and the choice of remainders~$\Rc$) is
\[
\mut_w\left(P,F; \Rc\right) \coloneqq \conv \left(\bigcup_{h=\hmin}^{-1} R_h \ \ \cup\ \ \bigcup_{h=0}^{\hmax} (P_h + hF) \right) \subset \NQ.
\]
\end{definition}
By~\cite[Proposition 3.8]{minkowski_mutation}, all choices of remainders~$\Rc$ will produce the same polytope; thus, we simply write~$\mut_w(P,F)$.
\begin{definition}
Let~$P$ and~$Q$ be Fano polytopes in~$\NQ$. We say that~$P$ and~$Q$ are~\emph{mutation-equivalent} if there exists a sequence of mutations taking~$P$ to~$Q$.
\end{definition}
Later on, when studying mutation-equivalence classes of Fano polygons, it will be useful to have some invariants at hand. The first invariant of interest is the Ehrhart series, whose definition we now recall.
\begin{definition}
Let~$P \subset \NQ$ be a polygon. The~\emph{Ehrhart series}~$\Ehr_P(t)$ of~$P$ is defined as the formal power series
\[
\Ehr_P(t) \coloneqq \sum_{k=0}^\infty |kP \cap N|\, t^k.
\]
\end{definition}
But instead of taking the Ehrhart series of a Fano polygon, we are interested in the Ehrhart series of its dual polygon. We recall the definition of (polar) duality of polygons.
\begin{definition}
Let~$P \subset \NQ$ be a polygon. Its~\emph{dual polyhedron} is defined to be the convex set
\[
P^* \coloneqq \setcond{u \in \MQ}{u(\vec{x}) \ge -1, \ \forall \vec{x} \in P}.
\]
\end{definition}
In~\cite{minkowski_mutation}, it was shown that if we apply a mutation to a Fano polytope~$P$, its dual polytope~$P^*$ is transformed by a piecewise linear function. Thus, it follows that the Ehrhart series of the dual polytope is invariant under mutation.
\begin{proposition}[{cf.\ \cite[Proposition~3.15]{minkowski_mutation}}]
Let~$P \subset \NQ$ be a Fano polygon. Then the Ehrhart series~$\Ehr_{P^*}(t)$ of its dual polygon~$P^*$ is invariant under mutation of~$P$.
\end{proposition}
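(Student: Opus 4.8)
The plan is to reduce to a single mutation and then transport the lattice-point count through the induced map on dual polygons. By the definition of mutation-equivalence it is enough to treat the case $Q = \mut_w(P,F)$ of a single mutation, with respect to a primitive $w \in M$ and a factor $F$ of $P$; the general statement then follows by induction on the length of a mutation sequence. Since $P$ and $Q$ are Fano, the duals $P^*$ and $Q^*$ are bounded (rational) polygons in $\MQ$, so what must be shown is that $\lvert kP^* \cap M\rvert = \lvert kQ^* \cap M\rvert$ for every integer $k \geq 0$.

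The key ingredient, which I would simply quote from \cite{minkowski_mutation}, is that the passage from $P^*$ to $Q^*$ is realised by a piecewise-linear map $\varphi = \varphi_{w,F}\colon \MQ \to \MQ$ with $\varphi(P^*) = Q^*$. I would record the three properties of $\varphi$ that the argument needs, all of which are part of that construction: (i)~$\varphi$ is a homeomorphism of $\MQ$; (ii)~$\varphi$ is linear on each cone of a complete fan of rational polyhedral cones, hence positively homogeneous of degree one, so that $\varphi(\lambda u) = \lambda\varphi(u)$ for all $\lambda \geq 0$ and $u \in \MQ$; and (iii)~on each maximal cone of this fan $\varphi$ is given by an element of $\GL_2(\ZZ)$, so that $\varphi$ restricts to a bijection of the lattice $M$ onto itself. (Explicitly, $\varphi$ is assembled from $w$ together with the two support functions $u \mapsto \min_{\vec{y}\in F}u(\vec{y})$ and $u \mapsto \max_{\vec{y}\in F}u(\vec{y})$; the verification that the resulting map carries $P^*$ onto $Q^*$ is exactly the content of the cited result.)

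Granting this, the conclusion is a short formal argument. Fix $k \geq 0$. By property~(ii), $\varphi(kP^*) = k\,\varphi(P^*) = kQ^*$. By properties~(i) and~(iii), $\varphi$ is injective and carries $M$ bijectively onto $M$, so $\varphi(kP^* \cap M) = \varphi(kP^*) \cap \varphi(M) = kQ^* \cap M$ and $\varphi$ restricts to a bijection between the finite sets $kP^* \cap M$ and $kQ^* \cap M$. Hence $\lvert kP^* \cap M\rvert = \lvert kQ^* \cap M\rvert$ for all $k$, and therefore
\[
\Ehr_{P^*}(t) \;=\; \sum_{k=0}^{\infty}\lvert kP^* \cap M\rvert\,t^k \;=\; \sum_{k=0}^{\infty}\lvert kQ^* \cap M\rvert\,t^k \;=\; \Ehr_{Q^*}(t),
\]
which, together with the induction over a mutation sequence, completes the proof.

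In this approach there is essentially no obstacle of its own: all of the geometric content sits in the existence of the lattice-preserving piecewise-linear dual mutation $\varphi_{w,F}$, which is imported wholesale from \cite{minkowski_mutation}. The one point I would be careful to state explicitly is property~(ii) — that the pieces on which $\varphi$ is linear are cones with apex at the origin — since it is precisely this, rather than mere piecewise-linearity, that allows one to pass from $\varphi(P^*) = Q^*$ to $\varphi(kP^*) = kQ^*$ for every $k$.
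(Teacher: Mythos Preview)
Your proposal is correct and is essentially a fleshed-out version of what the paper does: the paper does not give its own proof of this proposition but simply cites \cite[Proposition~3.15]{minkowski_mutation}, prefacing it with the remark that mutation transforms~$P^*$ by a piecewise-linear map and that the invariance of~$\Ehr_{P^*}$ follows. Your argument makes explicit exactly this reasoning --- the lattice-preserving, cone-wise linear dual mutation $\varphi_{w,F}$ and the resulting bijection $kP^*\cap M \to kQ^*\cap M$ --- so there is no substantive difference in approach.
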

In~\cite{ARTICLE:SING_CONT}, they introduced the important notion of singularity content of a Fano polygon. They proved that the singularity content of a Fano polygon is invariant under mutation. Here, we will introduce a modified version of this invariant. In particular, while the classical singularity content keeps track of singularity types of cones, we keep track of Hermite normal forms of cones. This modification will be vital to the success of later proofs.

We first define the notion locally, i.e.\ at the level of the edge (or cone).
\begin{definition} 
Let~$E$ be an edge of a Fano polygon with length~$\ell$ and height~$h$. Then~$\ell = nh + \widetilde{\ell}$, for some integers~$n \ge 0$ and~$0 \le \elltilde < h$. So, we can subdivide~$E$ into~$n$ segments of length~$h$ and a (possibly empty) segment~$\res(E)$ of length~$\elltilde$. We call~$\res(E)$ the~\emph{residue} of~$E$. Each segment of length and height~$h$ corresponds to a~\emph{primitive T-singularity}. If~$\elltilde > 0$, then the segment~$\res(E)$ corresponds to an~\emph{R-singularity}. We define the~\emph{directed singularity content}~$\SC(E)$ of~$E$ as the tuple~$(n_E, \res(H_E))$, where~$\res(H_E)$ is the~\emph{residue} of the Hermite normal form of~$E$, i.e.\ if~$\elltilde > 0$, then~$\res(H_E) = H_{\res(E)}$; else,~$\res(H_E) = \varnothing$.
\end{definition}
We now define the notion globally, i.e.\ at the level of the polygon.
\begin{definition} 
Let~$P \subset \NQ$ be a Fano polygon. The number~$n_P$ of primitive T-singularities of~$P$ is defined as the sum over all edges~$E$ of~$P$ of the number~$n_E$ of primitive T-singularities of~$E$. The~\emph{directed basket}~$\dirbasket_P$ of~$P$ is defined as the cyclically ordered set of non-empty residues of the Hermite normal forms of the edges of~$P$ (ordered anticlockwise). We define the~\emph{directed singularity content}~$\SC(P)$ of~$P$ as the tuple~$(n_P, \dirbasket_P)$. The subscript~$P$'s are often omitted when clear from context.
\end{definition}
Next, we prove that this modified version of singularity content is indeed a mutation-invariant of Fano polygons.
\begin{proposition}
Let~$P, Q \subset \NQ$ be Fano polygons. Suppose that they are related by a mutation, i.e.\ $Q = \mut_w(P,F)$ for some~$w \in M$ and factor~$F$ of~$P$ with respect to~$w$. Then, the directed singularity contents of~$P$ and~$Q$ coincide, i.e.\ $\SC(P) = \SC(Q)$.
\end{proposition}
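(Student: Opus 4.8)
The plan is to leverage the fact (recalled from \cite{ARTICLE:SING_CONT}) that the \emph{classical} singularity content is already a mutation-invariant, and then to upgrade this to the directed version by carefully tracking orientations. More precisely, I would reduce to the case where both $P$ and $Q$ are in a normal form adapted to the mutation: fix the primitive form $w$ so that the factor is a segment $F = \conv\{\vec 0, \vec e\}$ for a primitive $\vec e \in N_0$, and change coordinates by some $G \in \SL_2(\ZZ)$ so that $w = \ed{2}$ and $\vec e = \vec e_1$; this is harmless since the directed singularity content is $\SL_2(\ZZ)$-invariant by the preceding Lemma (applying a determinant $1$ matrix leaves every $H_E$ unchanged, hence leaves $n_P$ and $\dirbasket_P$ unchanged). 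Under such coordinates the mutation $\mut_w$ is the explicit piecewise-linear map that shears the part of the polygon at positive $w$-height and contracts the part at negative height, fixing the slice $w = 0$ pointwise.

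The core of the argument is then an edge-by-edge analysis. I would argue that $n_P = n_Q$ by the invariance of the \emph{number} of primitive T-singularities, which is already known (indeed $n_P$ depends only on the dual's Ehrhart data, or can be extracted from the classical singularity content, both mutation-invariants). For the directed basket, the key observation is that the residue $\res(H_E)$ of an edge is genuinely a local datum of the cone over the residual segment $\res(E)$, and the residual segments of $Q$ are in cyclic-order-preserving bijection with those of $P$: edges of $P$ lying entirely at height $\ge 0$ (resp.\ $\le 0$) map to edges of $Q$ via an $\SL_2(\ZZ)$-transformation that is \emph{local} to a neighbourhood of that edge — the shear fixes the supporting hyperplane $w = 0$ and acts linearly with determinant $1$ on each half — so by the Lemma such an edge keeps its Hermite normal form, and hence its residue, exactly. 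Edges of $P$ crossing the hyperplane $w=0$ need the more delicate treatment: here the mutation merges or splits edges, but the classical singularity-content argument of \cite{ARTICLE:SING_CONT} already shows the underlying residual \emph{lengths}, heights and singularity types are preserved; I would check that the orientation is also preserved by noting that $\mut_w$, restricted near such an edge, is still orientation-preserving (it is given by a determinant $1$ shear on the $w\ge 0$ half and the identity-like contraction on the $w \le 0$ half, both of which preserve anticlockwise order of rays), so the Hermite normal form of the residual cone — not merely its $\GL_2(\ZZ)$-class — is unchanged.

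The main obstacle I anticipate is the bookkeeping for edges that straddle $w = 0$: when a factor of positive length is involved, a single edge of $P$ may, after mutation, be absorbed into a longer edge of $Q$ (or vice versa), and one must verify that the combinatorial surgery on the residue — the part of length $\elltilde < h$ left over after stripping off primitive T-segments — is compatible with the orientation conventions in the definition of $H_\sigma$ (primitive ray generators listed anticlockwise, then row-style Hermite normal form). Concretely the danger is an off-by-a-reflection error: the classical invariant would not see the difference between $\frac1r(1,-a)$ and $\frac1r(1,-a^*)$ with $aa^* \equiv 1 \bmod r$, whereas the directed version must distinguish them, so I would need to confirm the shear never introduces such a swap. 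I would handle this by writing the mutation map explicitly on the relevant cone, computing the images of the two primitive ray generators of the residual segment, checking their anticlockwise order is preserved, and then invoking the first case of the Lemma (determinant $1$ $\Rightarrow$ $H$ unchanged). Once every residual segment of $P$ is matched, orientation-and-all, to one of $Q$ and the cyclic order is seen to be respected (again because $\mut_w$ preserves the anticlockwise ordering of the edges around the origin), we conclude $\dirbasket_P = \dirbasket_Q$ and therefore $\SC(P) = \SC(Q)$.
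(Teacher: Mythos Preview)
Your proposal is correct and follows essentially the same approach as the paper: reduce to the classical singularity-content invariance from \cite{ARTICLE:SING_CONT} (which pins down each $H_i'$ up to the ambiguity $H_i' \in \{H_i, H_i^*\}$), and then resolve the orientation ambiguity by observing that the two local linear pieces of the mutation each have determinant $1$, so the preceding Lemma forces $H_i' = H_i$. The paper compresses the whole argument into two sentences and does not dwell on edges straddling $w=0$ --- your more careful edge-by-edge bookkeeping is fine, but not needed once you have invoked the classical invariant for the cyclic matching of residues and the determinant-$1$ fact for the orientation.
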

\begin{proof}
Let~$\dirbasket_P \coloneqq \set{H_1, H_2, \ldots, H_m}$ be the directed basket of~$P$ and~$\dirbasket_Q \coloneqq \set{H_1', H_2', \ldots, H_{m'}'}$ be the directed basket of~$Q$. By~\cite[Proposition 3.6]{ARTICLE:SING_CONT}, the usual singularity content is invariant under mutation; therefore, we have~$m=m'$ and~$H_i' \in \set{H_i, H_i^*}$, for all~$i=1,2,\ldots,m$. It remains to show that~$H_i' = H_i$ for all~$i=1,2,\ldots,m$. But this follows from the fact that Hermite normal forms of edges are invariant under~$\SL_2(\ZZ)$-transformations; from~$P$ to~$Q$, the normals of the edges undergo one of two linear transformations which have determinant~$1$.
\end{proof}
In order to describe mutations, we introduce the notion of a long edge. This is already used implicitly in, for example,~\cite[Lemma~2.3]{minimality}. It is defined so that we can mutate a polygon with respect to some edge if and only if it is long.
\begin{definition} 
Let~$E$ be an edge with lattice length~$\ell$ and height~$h$. We say that~$E$ is~\emph{long} if its length is greater than or equal to its height, i.e.\ $\ell \ge h$. Otherwise, we say that~$E$ is~\emph{short}. In the latter case, the cone~$\sigma$ over~$E$ is its own residue, i.e.\ $\sigma = \res(\sigma)$. Finally, we say that an edge is~\emph{pure} if its residue is empty. Note that a pure edge must be long.
\end{definition}
We again recall the definition of minimality for Fano polygons, which we described in the introduction. 
\begin{definition}[\!{\cite[Definition~2.4]{minimality}}]
A Fano polygon~$P \subset \NQ$ is called~\emph{minimal} if, for every mutation~$Q \coloneqq \mut_w(P,F)$ of~$P$, we have~$|\boundary P \cap N| \le |\boundary Q \cap N|$, i.e.\ $P$ has the minimal number of boundary lattice points out of all polygons related to it by a single mutation.
\end{definition}
There are several equivalent conditions for a Fano polygon to be minimal (see~\cite[Lemma~4.2]{minimality}). We summarise the main characterisation that we use.
\begin{lemma}[\!{\cite[Corollary~4.5]{minimality}}]\label{lemma:min_poly_condition}
Let~$P \subset \NQ$ be a Fano polygon. Then~$P$ is minimal if and only if~$|\hmin| \le \hmax$, for each long edge~$E$ of~$P$, where~$u_E$ is the primitive normal to~$E$.
\end{lemma}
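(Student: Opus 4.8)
The plan is to compute, for a single mutation, the change in the number of boundary lattice points, and then to read off the criterion. First I would note that only a restricted family of mutations needs to be examined. If $Q=\mut_w(P,F)$ is nontrivial, then $F$ lies in the line $\{w=0\}$ and so is a segment of some lattice length $k\ge 1$ in a primitive direction $\vec{f}$. Inspecting the factor condition at the lowest height $h=\hmin$ (which is negative, so $|\hmin|\ge 1$) shows that $P_{\hmin}$ cannot be a single vertex, hence is an edge $E$; it is parallel to $\vec{f}$, primitivity of $w$ forces $w=u_E$, and the condition then reads $\ell_E\ge k\,|\hmin|$, so $E$ is long. Conversely, every long edge $E$ yields such a mutation for each admissible $k$ (this is exactly what the definition of a long edge is arranged to allow). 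Hence it is enough to control $|\boundary Q\cap N|-|\boundary P\cap N|$ when $w=u_E$ for a long edge $E$ and $F$ is a length-$k$ segment parallel to $E$.

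The core of the argument is the change formula
\[
|\boundary Q\cap N|-|\boundary P\cap N|\;=\;k\,(\hmax+\hmin),
\]
with $\hmin,\hmax$ taken with respect to $w=u_E$, so that $|\hmin|=h_E$. To prove it I would count boundary lattice points of $P$ and of $Q$ height by height. At a height $h$ strictly between $\hmin$ and $\hmax$ the mutation acts on the slab $N_h$ by the shear $\vec{x}\mapsto\vec{x}+h\vec{f}$ (for $h\ge 0$) or by passage to a remainder (for $h<0$), and one checks that this induces a height-preserving bijection between the boundary lattice points of $P$ at those heights and those of $Q$: lattice points created at intermediate heights by the global convex hull lie in $\strint Q$, and the two boundary chains of $P$ are carried onto the two boundary chains of $Q$. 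Thus the whole change is concentrated at the two extreme heights: at $\hmin$ the bottom slice $E$, of length $\ell_E$, is replaced by a remainder of length $\ell_E-k\,h_E$, losing $k\,h_E$ lattice points; at $\hmax$ the top slice $P_{\hmax}$ (an edge or a single vertex) has its length increased by $k\hmax$, gaining $k\hmax$ lattice points; summing gives $k\hmax-k\,h_E=k(\hmax+\hmin)$. Consequently $P$ fails to be minimal precisely when some such mutation has $k(\hmax+\hmin)<0$ — equivalently, when some long edge $E$ has $\hmax<h_E=|\hmin|$, which is already witnessed by $k=1$ — so $P$ is minimal if and only if $|\hmin|\le\hmax$ for every long edge, as claimed.

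The step I expect to be the main obstacle is the height-by-height bijection in the second paragraph. The subtlety is that $Q\cap N_h$ need not equal $P_h+hF$: convexifying with neighbouring slices can produce extra lattice points, and a boundary chain of $P$ or of $Q$ can meet $N_h$ at a non-lattice point. One has to argue that neither phenomenon alters the count of boundary lattice points at any height strictly between $\hmin$ and $\hmax$, so that the entire change genuinely localises at the bottom and top slices.
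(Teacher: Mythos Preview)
The paper does not give its own proof of this lemma: it is quoted verbatim from~\cite[Corollary~4.5]{minimality} and used as a black box, so there is nothing in the present paper to compare your argument against. What you have written is essentially the argument that appears in the cited source: reduce to mutations along a long edge~$E$ with $w=u_E$, establish the change-of-boundary formula $|\boundary Q\cap N|-|\boundary P\cap N|=k(\hmax+\hmin)$, and read off the criterion.

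Your outline is sound and the formula is correct; the only substantive gap is exactly the one you flag. In dimension two it can be closed cleanly without a height-by-height lattice-point count. Split $\boundary P$ into the bottom edge~$E$, the top face $P_{\hmax}$, and the two monotone side chains joining them. Mutation fixes one side chain pointwise and acts on the other by a single unimodular shear $\vec{x}\mapsto\vec{x}+k\,w(\vec{x})\,\vec{f}$; since a unimodular shear preserves lattice length of every segment, each side edge of $P$ is carried to a side edge of $Q$ of the same lattice length. Hence $\sum_{E'}\ell_{E'}$ changes only through the bottom and top faces, giving $-k\,h_E+k\,\hmax=k(\hmax+\hmin)$ as you claim. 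This sidesteps the worry about extra lattice points appearing in $Q\cap N_h$ from the global convex hull: those points are genuinely interior and never contribute to $|\boundary Q\cap N|=\sum_{E'}\ell_{E'}$.

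One small point in your first paragraph: when you assert that a long edge $E$ \emph{automatically} supplies a valid factor, you are implicitly using that in two dimensions the remainder conditions at all intermediate negative heights are forced by the single condition $\ell_E\ge k\,h_E$ at $\hmin$. This is true, and is again most easily seen via the two-monotone-chains picture (the side chain being sheared stays inside the strip between the other chain and itself), but it deserves a sentence since it is the two-dimensional miracle that makes the ``long edge~$\Leftrightarrow$ mutable'' dictionary work.
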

An important class of Fano polygons are reflexive polygons.
\begin{definition}[\!\cite{batyrev_reflexive}]
Let~$P \subset \NQ$ be a lattice polygon. We say that~$P$ is~\emph{reflexive} if each edge~$E$ of~$P$ has height~$1$.
\end{definition}
\begin{remark}
There are two properties of reflexive polygons relevant to minimality which we wish to highlight. First, it follows by Lemma~\ref{lemma:min_poly_condition} that all reflexive polygons are minimal (see~\cite[Example~4.4]{minimality}). Second, their edges are all long and pure. In particular, all reflexive polygons have empty basket.
\end{remark}
We finally introduce the notion of edge data, which is instrumental in the proof of Theorem~\ref{thm:sym_unique}.
\begin{definition}
Let~$P \subset \NQ$ be a Fano polygon. Order its edges anticlockwise:~$E_1, E_2, \ldots, E_m$. The~\emph{edge data}~$\Ec(P)$ of~$P$ is defined as the cyclically ordered set~$\set{H_1, H_2, \ldots, H_m}$, where~$H_i$ is the Hermite normal form of the edge~$E_i$.
\end{definition}
In general, the edge data is not a mutation-invariant. Note that if we take the residue of the edge data, and remove the empty entries, then we recover the directed basket, which~\emph{is} invariant under mutation.
\begin{example}\label{eg:edge_data}
Consider the Fano polygon
\[
P = \conv\set{\pm(5,1), \pm(5,6), \pm(4,7), \pm(-3,7), \pm(-4,5)},
\]
which is shown in Figure~\ref{fig:cs_fano_example}. This has edge data
\[
\Ec(P) = \set{\left(\begin{smallmatrix}
 1 & 6 \\
 0 & 25
\end{smallmatrix}\right), \left(\begin{smallmatrix}
 1 & 3 \\
 0 & 11
\end{smallmatrix}\right), \left(\begin{smallmatrix}
 1 & 36 \\
 0 & 49
\end{smallmatrix}\right), \left(\begin{smallmatrix}
 1 & 10 \\
 0 & 13
\end{smallmatrix}\right), \left(\begin{smallmatrix}
 1 & 23 \\
 0 & 29
\end{smallmatrix}\right), \left(\begin{smallmatrix}
 1 & 6 \\
 0 & 25
\end{smallmatrix}\right), \left(\begin{smallmatrix}
 1 & 3 \\
 0 & 11
\end{smallmatrix}\right), \left(\begin{smallmatrix}
 1 & 36 \\
 0 & 49
\end{smallmatrix}\right), \left(\begin{smallmatrix}
 1 & 10 \\
 0 & 13
\end{smallmatrix}\right), \left(\begin{smallmatrix}
 1 & 23 \\
 0 & 29
\end{smallmatrix}\right)}.
\]
\end{example}
Finally, we remark that edge data is invariant under~$\SL_2(\ZZ)$-transformation. Further, applying a transformation in~$\GL_2(\ZZ)$ with determinant~$-1$ to the polygon~$P$ changes the edge data in a predictable way. Write the edge data of~$P$ as~$\Ec(P) = \set{H_1, H_2, \ldots, H_m}$ and let~$U \in \GL_2(\ZZ) \setminus \SL_2(\ZZ)$. Then,~$\Ec(UP) = \set{H^*_m, \ldots, H^*_2, H^*_1}$. Of course, the analogous fact holds for the directed basket~$\dirbasket$ of~$P$.
\subsection{Symmetric and K{\"a}hler--Einstein polygons}
The main objects of study in this paper are symmetric and \KE\ Fano polygons. In this subsection, we state their definitions and several basic results concerning them.
\begin{definition}\label{def:KE}
Let~$P \subset \NQ$ be a Fano polygon. It's called~\emph{\KE} if the barycentre of its dual polytope~$P^*$ is the origin. It's called~\emph{symmetric} if its automorphism group~$\Aut(P)$ only fixes the origin, i.e.\ $\setcond{\vec{x} \in \NQ}{G \cdot \vec{x} = \vec{x},\, \forall G \in \Aut(P)} = \set{\vec{0}}$.
\end{definition}
\begin{definition}
Let~$P \subset \NQ$ be a polygon. We call it~\emph{centrally symmetric} if for all~$\vec{x} \in P$, we also have~$-\vec{x} \in P$. We call it~\emph{$3$-symmetric} if there exists an element~$G \in \Aut(P)$ of order~$3$.
\end{definition}
It is straightforward to see that if a polygon is centrally symmetric or~$3$-symmetric, then it is also symmetric. We show that the converse also holds.
\begin{proposition}
Let~$P \subset \NQ$ be a symmetric polygon. Then~$P$ is either centrally symmetric or~$3$-symmetric.
\end{proposition}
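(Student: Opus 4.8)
The plan is to analyse the finite group $\Aut(P) \le \GL_2(\ZZ)$ directly, using the fact that it is a finite subgroup of $\GL_2(\ZZ)$ that fixes only the origin, and then invoke the classification of finite subgroups of $\GL_2(\ZZ)$. Recall that every finite subgroup of $\GL_2(\ZZ)$ is conjugate to a subgroup of one of the dihedral groups of order $8$ (symmetries of the square) or order $12$ (symmetries of the hexagon); in particular its order divides $8$ or $12$, and any element has order $1,2,3,4$ or $6$. Since $P$ is symmetric, $\Aut(P)$ is not contained in the subgroup generated by any single reflection or by a single non-central order-$2$ rotation composed with nothing — more precisely, $\Aut(P)$ must contain an element acting on $\NQ$ without nonzero fixed vectors.

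First I would observe that an element $G \in \GL_2(\ZZ)$ of finite order fixes only the origin if and only if $1$ is not an eigenvalue of $G$, which (for the finitely many possibilities) happens exactly when $G$ is $-I$ (the order-$2$ central element) or $G$ has order $3$ or order $6$ (rotations by $\pm 120^\circ$ or $\pm 60^\circ$); reflections and order-$4$ rotations both have $\pm 1$ as eigenvalues in a way that leaves a line or the whole plane... no: a $90^\circ$ rotation also fixes only the origin. So the elements of $\GL_2(\ZZ)$ fixing only the origin are: $-I$, the order-$3$ elements, the order-$4$ elements, and the order-$6$ elements. Now suppose $P$ is symmetric. If $\Aut(P)$ contains an element of order $3$ (equivalently of order $6$, since its cube then has order $2$ but we only need the order-$3$ power), then $P$ is $3$-symmetric and we are done. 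Otherwise $\Aut(P)$ contains no element of order $3$ or $6$, so by the classification its order divides $8$ and it is (conjugate to) a subgroup of the dihedral group of order $8$. I then argue that in this case $\Aut(P)$ must contain $-I$: a subgroup of $D_4$ all of whose non-identity elements fix only the origin — but the reflections in $D_4$ each fix a line, so $\Aut(P)$ can contain no reflection; hence $\Aut(P)$ lies in the cyclic rotation subgroup of order $4$, and as it is nontrivial (it fixes only $\vec 0$, so it cannot be $\{I\}$) it contains the order-$2$ rotation, which is $-I$. Therefore $-I \in \Aut(P)$, i.e.\ $P$ is centrally symmetric.

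Wrapping up: in the first case $P$ is $3$-symmetric; in the second case $-\vec{x} \in P$ for all $\vec{x} \in P$ since $-I \in \Aut(P)$, so $P$ is centrally symmetric. The one subtlety to be careful about is that conjugating $\Aut(P)$ into a standard dihedral group is done by some $U \in \GL_2(\QQ)$, not necessarily in $\GL_2(\ZZ)$; but this does not matter, because "fixes only the origin", "has order $3$", and "equals $-I$" are all conjugation-invariant properties, and $-I$ and the order-$3$ rotation are central / characteristic enough that the conclusion transfers back to $\Aut(P)$ itself. The main (very mild) obstacle is just marshalling the classification of finite subgroups of $\GL_2(\ZZ)$ correctly and checking the eigenvalue/fixed-point bookkeeping for each conjugacy class of elements; everything else is immediate.
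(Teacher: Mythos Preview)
Your approach is essentially the same as the paper's --- reduce to the classification of finite subgroups of $\GL_2(\ZZ)$ --- but there is a genuine gap in the $D_4$ branch. The symmetric condition says that the \emph{common} fixed-point set $\bigcap_{G\in\Aut(P)}\mathrm{Fix}(G)$ equals $\{\vec{0}\}$; it does \emph{not} say that every non-identity element of $\Aut(P)$ individually fixes only the origin. So your sentence ``a subgroup of $D_4$ all of whose non-identity elements fix only the origin --- but the reflections in $D_4$ each fix a line, so $\Aut(P)$ can contain no reflection'' is unjustified. For instance, $\Aut(P)$ could be the Klein four-group $\{I,-I,\mathrm{diag}(1,-1),\mathrm{diag}(-1,1)\}$: this contains two reflections, each fixing a line, yet the common fixed set is $\{\vec{0}\}$.

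The repair is short and stays inside your framework. One option: simply list the subgroups of $D_4$ not containing $-I$ (the trivial group and the four cyclic groups generated by a single reflection) and note that each has a nontrivial common fixed set, contradicting symmetry. Another option, closer to what the paper does: observe that if $\Aut(P)\cap\SL_2(\ZZ)=\{I\}$ then $\Aut(P)$ has order at most $2$ and is either trivial or generated by a single reflection, again contradicting symmetry; hence $\Aut(P)$ contains a nontrivial rotation $G$, whose order lies in $\{2,3,4,6\}$, and either $G^2$ has order $3$ or some power of $G$ equals $-I$. (Incidentally, in your order-$6$ remark you want the \emph{square}, not the cube, to get the order-$3$ element.)
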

\begin{proof}
Consider the automorphism group of~$P$, which is a finite subgroup of~$\GL_2(\ZZ)$. Since~$P$ is symmetric,~$\Aut(P)$ must contain a rotation~$G$. By~\cite[Theorem~3]{mackiw1996finite},~$\Aut(P)$ is isomorphic to a subgroup of~$D_4$ or~$D_6$, the dihedral groups of order~$8$ and~$12$, respectively. Thus, the rotation~$G$ has order in~$\set{2,3,4,6}$. If~$G$ has order~$3$, then~$P$ is~$3$-symmetric. Otherwise,~$G$ has even order~$2g$. Since the element~$G^g$, which belongs to~$\Aut(P)$, is~$\left(\begin{smallmatrix}
 -1 & 0 \\
 0 & -1
\end{smallmatrix} \right)$, it follows that~$P$ is centrally symmetric.
\end{proof}
\begin{remark}\label{remark:times_notation}
The polygon of Example~\ref{eg:edge_data} is centrally symmetric. For cyclically ordered sets like directed baskets and edge data, let us introduce the notation~$\times g$ to indicate that the objects are repeated~$g$ times. For example,~$\set{H_1, H_2, H_3} \times 2$ means~$\set{H_1, H_2, H_3, H_1, H_2, H_3}$. It is a fact that the directed basket (and edge data) of a centrally symmetric polygon can always be written as~$\Cc \times 2$, for some cyclically ordered set~$\Cc$. The analogous fact holds for~$3$-symmetric polygons (replace~$\times 2$ with~$\times 3$).
\end{remark}
Next, let us recall the definition of weights and weight matrices of polygons. As we will immediately see, K{\"a}hler-Einstien Fano triangles have a nice describe in terms of weights. Further, we will give a description in~\S\ref{section:countereg} of a \KE\ quadrilateral~$P$ in terms of the weight system of~$P^*$.
\begin{definition}
Let~$P \subset \NQ$ be a polygon and fix an ordering of its vertices~$\vec{v}_1, \vec{v}_2, \ldots, \vec{v}_m$. Let~$\vec{q} \in \ZZ^m$ be an integer vector of length~$m$. Then~$\vec{q}$ is a~\emph{weight} of~$P$ if~$\sum_{i=1}^m q_i \vec{v}_i = \vec{0}$. Let~$W \in \ZZ^{(m-2) \times m}$ be an integer matrix of rank~$m-2$. Then~$W$ is a~\emph{weight matrix} for~$P$ if~$\sum_{i=1}^m W_{ji} \vec{v}_i = \vec{0}$ for all~$j=1,2,\ldots,m-2$.
\end{definition}
\begin{remark}
The toric variety corresponding to a Fano polygon~$P \subset \NQ$ is a fake weighted projective plane~$X_P = \PP(\lambda_0 \lambda_1, \lambda_2) / G$, where~$(\lambda_0, \lambda_1, \lambda_2)$ are the weights of~$P$ and~$G$ is the group~$N / N_P$ and~$N_P$ is the sublattice of~$N$ generated by the vertices of~$P$. Since we are in dimension two,~$G$ is a cyclic group of order~$k$.

There are typically several different fake weighted projective planes with the same weights~$(\lambda_0, \lambda_1, \lambda_2)$ and index~$k$. In some cases, though, there is only one such variety up to isomorphism. In these cases, we are safe to write~$X_P = \PP(\lambda_0, \lambda_1, \lambda_2) / \ZZ_k$. For instance, throughout the paper we will write~$X_P = \PP^2$ when~$P$ is isomorphic to the triangle with vertices~$(-1,-1)$,~$(1,0)$, and~$(0,1)$. We will also write~$X_P = \PP^2 / \ZZ_3$ when~$P$ is isomorphic to the triangle with vertices~$(-1,-1)$,~$(2,-1)$, and~$(-1,2)$.
\end{remark}
We prove the following statement, which will be useful in both~\S\ref{subsec:constrain_3sym} and~\S\ref{section:KE-triangles}.
\begin{lemma}\label{lemma:KE_triangle_weights}
Let~$P \subset \NQ$ be a \KE\ Fano triangle. Then~$P$ has weights~$(1,1,1)$.
\end{lemma}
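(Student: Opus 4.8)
The plan is to exploit the duality between the Fano triangle $P$ and its dual polygon $P^*$. If $P$ is a triangle with primitive vertices $\vec{v}_1,\vec{v}_2,\vec{v}_3$, then $P^*$ is also a triangle, whose three edges are dual to the three vertices of $P$: the edge of $P^*$ corresponding to $\vec{v}_i$ lies on the hyperplane $\setcond{u \in \MQ}{u(\vec{v}_i) = -1}$. The weight $\vec{q} = (q_1,q_2,q_3)$ of $P$, normalised to be primitive with positive entries (possible since $\vec{0} \in \strint(P)$ forces all $q_i$ to have the same sign), encodes the barycentric coordinates of the origin: $\vec{0} = \sum q_i \vec{v}_i$ with $\sum q_i =: q > 0$. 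The \KE\ condition says the barycentre of $P^*$ is $\vec{0}$.

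First I would set up coordinates on $P^*$. Writing $u_1, u_2, u_3 \in \MQ$ for the vertices of $P^*$ (so $u_j$ is the intersection of the two supporting lines $u(\vec{v}_i)=-1$ for $i \neq j$), the barycentre of $P^*$ is $\tfrac13(u_1+u_2+u_3)$. Next I would compute the pairing of this barycentre against each $\vec{v}_i$. Since $u_j(\vec{v}_i) = -1$ whenever $i \neq j$, we get $(u_1+u_2+u_3)(\vec{v}_i) = -2 + u_i(\vec{v}_i)$, so the \KE\ condition $\tfrac13(u_1+u_2+u_3) = \vec{0}$ forces $u_i(\vec{v}_i) = 2$ for each $i$. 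Then, pairing the relation $\sum_i q_i \vec{v}_i = \vec{0}$ against $u_j$ gives $0 = \sum_i q_i u_j(\vec{v}_i) = -\sum_{i\neq j} q_i + 2q_j = 2q_j - (q - q_j) = 3q_j - q$, hence $q_j = q/3$ for every $j$. Thus $q_1 = q_2 = q_3$, and since the weight is primitive this gives $\vec{q} = (1,1,1)$.

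The one genuine subtlety — and the step I expect to need the most care — is justifying that the argument runs in reverse direction correctly, i.e.\ that the \KE\ hypothesis on $P^*$ really does pin down $u_i(\vec{v}_i)$ rather than merely constraining a linear combination; this rests on the observation that the covectors $\vec{v}_1,\vec{v}_2,\vec{v}_3$ span $\MQ^*$ (indeed any two of them are linearly independent, since $P$ is a genuine triangle with $\vec 0$ in the interior), so the three scalar equations $(u_1+u_2+u_3)(\vec{v}_i) = 0$ are equivalent to $u_1+u_2+u_3 = \vec 0$. One should also check the sign normalisation of $\vec q$: positivity of all $q_i$ follows because $\vec 0 = \sum q_i \vec v_i / q$ exhibits the origin as a convex combination of the vertices with the stated coefficients, and the origin lies in the strict interior, so none of the $q_i$ vanishes and they cannot have mixed signs. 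With these points addressed the computation above is complete; alternatively, one can phrase the same calculation entirely in terms of the heights $h_{E_i}$ of the edges $E_i$ of $P^*$ and the fact that for a \KE\ triangle each such height equals the reciprocal of the relevant barycentric coordinate, but the covector computation seems cleanest.
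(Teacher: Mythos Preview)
Your proof is correct and follows essentially the same route as the paper: both arguments use that the barycentre of a triangle is the average of its vertices, so the \KE\ condition on $P^*$ says $u_1+u_2+u_3=\vec 0$. The only difference is packaging: the paper then quotes \cite[Lemma~3.5]{conrads} to transfer the weights $(1,1,1)$ from $P^*$ back to $P$, whereas you carry out that transfer by hand via the pairing computation $u_j(\vec v_i)=-1$ for $i\neq j$ and $u_i(\vec v_i)=2$, which is exactly what the cited lemma amounts to in this case. Your version is thus slightly more self-contained; the paper's is shorter by outsourcing that step.
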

\begin{proof}
It is well-known that the barycentre of a triangle is the average of its vertices. Therefore, a triangle has barycentre zero if and only if the sum of its vertices is the origin. Equivalently, a triangle has barycentre zero if and only if it has weights~$(1,1,1)$. By~\cite[Lemma~3.5]{conrads}, the weights of a Fano triangle coincide with the weights of its dual. Thus, a Fano triangle is \KE\ if and only if it has weights~$(1,1,1)$.
\end{proof}
We can also use the above to give an alternative proof for the following statement, which was proven in~\cite{on_KEs} using direct computation of the barycentre of the dual.
\begin{lemma}[\!{\cite[Proposition~4.1]{on_KEs}}]\label{lemma:KE_triangle_verts}
Let~$P \subset \NQ$ be a Fano triangle. Then~$P$ is \KE\ if and only if~$P$ is isomorphic to a triangle with vertices~$(-k,a-1)$,~$(k,-a)$, and~$(0,1)$, for some integers~$k, a \ge 1$ satisfying~$\gcd(k,a) = \gcd(k,a-1) = 1$.
\end{lemma}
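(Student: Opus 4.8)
The plan is to deduce the statement entirely from Lemma~\ref{lemma:KE_triangle_weights}, which characterises \KE\ Fano triangles as exactly those whose vertices sum to zero. So the proof splits into two directions, and both are essentially normalisation arguments.

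For the ``if'' direction, I would fix $k,a \ge 1$ with $\gcd(k,a) = \gcd(k,a-1) = 1$ and check directly that $T \coloneqq \conv\set{(-k,a-1),(k,-a),(0,1)}$ is a \KE\ Fano triangle. Three things need verifying: non-degeneracy (a quick $2\times 2$ determinant, which comes out to $\pm 3k \ne 0$); primitivity of the vertices (immediate: $(0,1)$ is primitive, and the two gcd hypotheses are precisely primitivity of the other two vertices); and that the origin lies in $\strint(T)$ --- but this comes for free, since the three vertices sum to zero, so the origin is the barycentre of $T$, which always lies in the interior of a non-degenerate triangle. Then Lemma~\ref{lemma:KE_triangle_weights} gives that $T$ is \KE, precisely because its vertices sum to zero.

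For the ``only if'' direction, I would start from a \KE\ Fano triangle $P$ with vertices $\vec{v}_1,\vec{v}_2,\vec{v}_3$; Lemma~\ref{lemma:KE_triangle_weights} gives $\vec{v}_1+\vec{v}_2+\vec{v}_3 = \vec{0}$. Since $\vec{v}_3$ is primitive, I can apply a $\GL_2(\ZZ)$-transformation sending it to $(0,1)$; then $\vec{v}_1 + \vec{v}_2 = (0,-1)$, so $\vec{v}_1 = (-k,b)$ and $\vec{v}_2 = (k,-1-b)$ for some $k,b \in \ZZ$. Non-degeneracy forces $k \ne 0$, and composing with $\left(\begin{smallmatrix}-1&0\\0&1\end{smallmatrix}\right)$ if needed (it fixes $(0,1)$ and merely swaps $\vec{v}_1$ and $\vec{v}_2$) lets me assume $k \ge 1$; the shear $\left(\begin{smallmatrix}1&0\\c&1\end{smallmatrix}\right)$ fixes $(0,1)$ and shifts $b$ by a multiple of $k$, so I can normalise $b \ge 0$ and set $a \coloneqq b+1 \ge 1$. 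This puts $P$ into the stated form, and the two gcd conditions then fall out of primitivity of $\vec{v}_1$ and $\vec{v}_2$.

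There is no serious obstacle here: the statement is really a corollary of Lemma~\ref{lemma:KE_triangle_weights}, which has already done the geometric work of replacing ``barycentre of the dual is the origin'' by ``the vertices sum to zero''. The only point requiring genuine care is the book-keeping in the second direction --- in particular, that one may need a determinant-$(-1)$ transformation to make $k$ positive, and that this only permutes $\vec{v}_1$ and $\vec{v}_2$ and hence disturbs nothing.
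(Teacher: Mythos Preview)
Your proposal is correct and follows essentially the same route as the paper: both reduce the statement to Lemma~\ref{lemma:KE_triangle_weights} and then normalise via $\GL_2(\ZZ)$. The paper's proof is terser --- it only spells out the ``only if'' direction and simply asserts the sign conventions on the remaining two vertices before invoking the sum-to-zero condition --- whereas you are more explicit about the ``if'' direction and about the shear/reflection used to arrange $k \ge 1$ and $a \ge 1$, but the substance is the same.
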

\begin{proof}
Since~$P$ is Fano, we may assume without loss of generality that one of its vertices~$(0,1)$. We label its other vertices as~$(-m, b)$ and~$(k, -a)$, for some integers~$a,b,k,m \ge 1$. By Lemma~\ref{lemma:KE_triangle_weights}, since~$P$ is \KE, the sum of its vertices must be the origin. So,~$(k-m, 1+b-a) = (0,0)$. Thus,~$m = k$,~$b = a-1$, and the result follows.
\end{proof}
Finally, we must give the following lemma, which again is used in both~\S\ref{subsec:constrain_3sym} and~\S\ref{section:KE-triangles}.
\begin{lemma}\label{lemma:same_height_KE}
Let~$P \subset \NQ$ be a \KE\ Fano triangle. Suppose that all the edges of~$P$ have the same height~$h$ and are long. Then, either~$X_P = \PP^2$ or~$X_P = \PP^2 / \ZZ_3$.
\end{lemma}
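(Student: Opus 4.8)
The plan is to reduce to the explicit normal form of Lemma~\ref{lemma:KE_triangle_verts} and then simply read off the edge data. I would write $P \cong \conv\{(-k, a-1), (k,-a), (0,1)\}$ with $k, a \ge 1$ and $\gcd(k,a) = \gcd(k,a-1) = 1$, set $v_1 = (-k,a-1)$, $v_2 = (k,-a)$, $v_3 = (0,1)$, and compute. A short calculation shows the edge $v_1 v_2$ has lattice length $\gcd(k, 2a-1)$, the edge $v_1 v_3$ has length $\gcd(k, a-2)$, and the edge $v_2 v_3$ has length $\gcd(k, a+1)$ (with the convention $\gcd(k,0) = k$ covering the degenerate case $a = 2$ where $v_1 v_3$ is horizontal); moreover in each case the primitive inner normal evaluates on the edge to $-k$ divided by that edge's length, so every edge $E$ of $P$ satisfies the identity $\ell_E\, h_E = k$.

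Next I would feed in the hypotheses. Since all edges have the same height $h$, the identity $\ell_E h_E = k$ forces all edges to have the same length $d := k/h$, and $d$ then divides each of $k$, $2a-1$, $a-2$, $a+1$; in particular $d \mid (a+1)-(a-2) = 3$, so $d \in \{1,3\}$. Longness of every edge means $d = \ell_E \ge h_E = k/d$, i.e.\ $k \le d^2 \le 9$. If $d = 1$ this forces $k = 1$. If $d = 3$ then $3 \mid k$ and $k \in \{3,6,9\}$, and I would exclude $k = 6$ and $k = 9$ using the coprimality conditions: for $k = 6$, $\gcd(6,a)=1$ forces $a$ odd while $\gcd(6,a-1)=1$ forces $a$ even; for $k = 9$, the requirements $\gcd(9,a-2)=\gcd(9,a+1)=3$ force $a \equiv 2 \pmod 3$ with $9 \nmid a-2$ and $9 \nmid a+1$, which on writing $a = 3t+2$ says $3 \nmid t$ and $3 \nmid t+1$, hence $t \equiv 1 \pmod 3$; but then $2a-1 = 3(2t+1)$ is divisible by $9$, contradicting $\gcd(9,2a-1)=3$. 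So $k = 3$. In either surviving case $h = k/d = 1$, so $P$ is reflexive.

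Finally I would identify the reflexive \KE\ triangle $P \cong \conv\{(-k,a-1),(k,-a),(0,1)\}$ with $k \in \{1,3\}$. When $k = 1$, the shear $(x,y) \mapsto (x, ax+y) \in \SL_2(\ZZ)$ carries $P$ to $\conv\{(-1,-1),(1,0),(0,1)\}$, so $X_P = \PP^2$. When $k = 3$, we have $3 \mid a-2$, so writing $a = 3s+2$ the shear $(x,y) \mapsto (x, sx+y)$ carries $P$ to $\conv\{(-3,1),(3,-2),(0,1)\}$, and then $\left(\begin{smallmatrix} 0 & -1 \\ 1 & 2\end{smallmatrix}\right) \in \SL_2(\ZZ)$ carries this to $\conv\{(-1,-1),(2,-1),(-1,2)\}$, so $X_P = \PP^2/\ZZ_3$. (Alternatively, one can invoke the classification of reflexive polygons: $P$ has weights $(1,1,1)$ by Lemma~\ref{lemma:KE_triangle_weights}, and among reflexive triangles only $\PP^2$ and $\PP^2/\ZZ_3$ have those weights.)

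I expect the main obstacle to be the bookkeeping in the middle paragraph: correctly computing the three edge heights — pinning down the primitive inner normals and their signs — so as to obtain the identity $\ell_E h_E = k$, and then carrying out the modular arithmetic that eliminates $k = 6$ and $k = 9$. Once $\ell_E h_E = k$ is in hand, the rest is essentially forced.
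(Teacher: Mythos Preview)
Your argument is correct and follows essentially the same route as the paper: reduce to the normal form of Lemma~\ref{lemma:KE_triangle_verts}, use the identity $\ell_E h_E = k$ to force equal lengths $d \mid 3$, and then eliminate the excess cases by coprimality. The only cosmetic difference is that in the $d=3$ branch the paper parameterises by $h \in \{1,2,3\}$ (via $a = 3b-1$) where you parameterise by $k \in \{3,6,9\}$; since $k = 3h$ there, the two case splits are the same.
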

\begin{proof}
By Lemma~\ref{lemma:KE_triangle_verts}, we may apply an appropriate transformation so that~$P$ has vertices~$(-k,a-1)$,~$(k,-a)$, and~$(0,1)$, for some integers~$a,k \ge 1$. For each edge~$E$ of~$P$, consider the triangle with base~$E$ and peak~$\vec{0}$. It has (normalised) volume~$k$. It also has volume~$\ell_E h_E$, where~$\ell_E$ is the length of~$E$ and~$h_E$ is the height of~$E$. But by assumption, all the edges of~$P$ have the same height~$h$. Thus, all edges have the same length~$\ell$ and so we obtain the following condition:
\begin{equation}\label{eq:lengths_KE}
\ell = \gcd(k,a+1) = \gcd(k,a-2) = \gcd(2k,2a-1).
\end{equation}
We see that~$\ell$ divides~$a+1$ and~$a-2$; thus,~$\ell$ divides~$3$.

If~$\ell = 1$ then, since each edge of~$P$ is long, we have~$h=1$. We may write~$a = 1$. It follows that~$X_P = \PP^2$. Otherwise,~$\ell = 3$. We may write~$a = 3b-1$, for some integer~$b \ge 1$. Plugging this into~\eqref{eq:lengths_KE}, we obtain:
\begin{equation}\label{eq:heights_KE}
1 = \gcd(h,b) = \gcd(h,b-1) = \gcd(2h,2b-1).
\end{equation}
Since the edges of~$P$ are long, we have~$h \le 3$. If~$h=1$, then we may write~$b=1$. It follows that~$X_P = \PP^2 / \ZZ_3$. If~$h=2$, then~\eqref{eq:heights_KE} implies that~$b \not\congr 0, 1 \mod 2$, which cannot hold. Else~$h=3$, and~\eqref{eq:heights_KE} implies that~$b \not\congr 0,1,2 \mod 3$, which also cannot hold.
\end{proof}
\section{At most one symmetric}\label{section:most_one_cs_fano}
We dedicate this section to proving part of Theorem~\ref{thm:main}. Namely, we prove the following statement.
\begin{theorem}\label{thm:sym_unique}
There is at most one symmetric Fano polygon in each mutation-equivalence class.
\end{theorem}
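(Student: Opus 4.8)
The plan is to prove the theorem by showing that a \emph{symmetric} Fano polygon is determined, up to $\GL_2(\ZZ)$-equivalence, by its directed singularity content $\SC(P) = (n_P, \dirbasket_P)$ together with its symmetry type. Since the directed singularity content is a mutation-invariant (as established above) and, by the proposition above, every symmetric polygon is either centrally symmetric or $3$-symmetric, this would immediately give the theorem: two mutation-equivalent symmetric polygons $P$ and $Q$ have $\SC(P) = \SC(Q)$, and — once one disposes of the ``mixed'' case, in which $P$ is centrally symmetric but $Q$ is $3$-symmetric (and not centrally symmetric) — they are then forced to be isomorphic. For the mixed case I would note that $\dirbasket_P = \Cc\times 2$ and $\dirbasket_Q = \Cc'\times 3$ (Remark~\ref{remark:times_notation}), so the common directed basket is simultaneously $2$- and $3$-periodic, and feed this extra periodicity into the reconstruction below to force a contradiction (handling the degenerate empty-basket subcase by hand).

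The heart of the argument is a reconstruction procedure expressed through the edge data. Recall that symmetric polygons are minimal, so by Lemma~\ref{lemma:min_poly_condition} each long edge $E$ satisfies $\lvert\hmin\rvert \le \hmax$ with respect to its primitive inner normal $u_E$; and when $P$ is centrally symmetric this holds with \emph{equality}, $\hmax = h_E = \lvert\hmin\rvert$, since $-P = P$ forces $\max_P u_E = -\min_P u_E$. Write $\Ec(P) = \set{H_1,\ldots,H_k}\times g$ with $g\in\set{2,3}$, so that the ``fundamental block'' $\set{H_1,\ldots,H_k}$ is the edge data of a fan of cones $\sigma_1,\ldots,\sigma_k$ spanning an angle $2\pi/g$, whose two extreme rays are interchanged by the relevant symmetry (negation if $g=2$, the order-$3$ rotation if $g=3$). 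From $H_i = \left(\begin{smallmatrix}1 & a_i\\ 0 & r_i\end{smallmatrix}\right)$ one reads off $\ell_{E_i} = \gcd(a_i-1, r_i)$ and $h_{E_i} = r_i/\ell_{E_i}$, hence the residue $\res(H_{E_i})$ and the number $n_{E_i} = \floor{\ell_{E_i}/h_{E_i}}$ of primitive T-singularities on $E_i$; thus the residues of the $H_i$ recover a $1/g$-portion of $\dirbasket_P$, and the $n_{E_i}$ sum to $n_P/g$. The plan is then to show that, after placing the first extreme ray in standard position via $\SL_2(\ZZ)$, the remaining rays of the fundamental block are forced: gluing the cones one at a time leaves only a shear ambiguity at each step, and these ambiguities are pinned down by requiring (i) that the polygon stay convex with the origin strictly interior (this is where minimality, in the form $\hmax = h_E$ along each long edge, excludes the reconstruction ``overshooting'') and (ii) that the fan close up correctly, i.e.\ that the last ray be the prescribed symmetric image of the first. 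Using that $\dirbasket_P$, $n_P$, and, if necessary, $\Ehr_{P^*}$ agree for $P$ and $Q$, one concludes that the fundamental blocks of $\Ec(P)$ and $\Ec(Q)$ coincide, and hence $P \isom Q$.

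The step I expect to be the main obstacle is exactly (i)--(ii): showing the shear freedom in the gluing collapses to a single admissible choice. For an unconstrained fan this is false — the edge data is not even a mutation-invariant, and, for instance, $\PP^1\times\PP^1$ and the Hirzebruch surface $\mathbb{F}_1$ have identical edge data but are not isomorphic — so the argument must genuinely exploit that the fundamental sector spans a prescribed angle, closes onto a prescribed ray, and (in the centrally symmetric case) realises the balanced bound along each long edge; converting this rigidity into a clean uniqueness statement is the delicate part. A related difficulty is that primitive T-singularities along \emph{pure} edges are invisible to $\dirbasket_P$ alone, so controlling how they are distributed — equivalently, how the fundamental block subdivides into edges — will require either the Ehrhart invariant $\Ehr_{P^*}$ or an induction on $n_P$ carried out through mutations. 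Finally, the mixed centrally-symmetric/$3$-symmetric case needs its own careful treatment, since there one must produce an actual contradiction rather than merely reconcile invariants.
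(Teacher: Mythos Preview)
Your outline is broadly on the right track and mirrors the paper's two-step strategy: first show that the edge data of a symmetric Fano polygon is pinned down by mutation-invariants, then show that the edge data determines the polygon. However, two concrete ingredients are missing, and without them your step~(i)--(ii) ``the shear freedom collapses'' remains a hope rather than an argument.

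The first missing piece is a sharp structural bound on the long edges. The paper proves, via an explicit computation of $kP^*\cap M$ for small $k$ (Propositions~\ref{prop:heights_fixed_cs} and~\ref{prop:heights_fixed_3_sym}), that a symmetric Fano polygon with non-empty basket has \emph{exactly} four long edges in the centrally symmetric case (two opposite pairs, whose normals span $M$) and \emph{exactly} three in the $3$-symmetric case; moreover, the heights of these long edges are literally read off from the first jump(s) in $\Ehr_{P^*}$. This converts ``pure edges are invisible to $\dirbasket_P$'' from an open-ended difficulty into a finite bookkeeping problem: there is only one place in each fundamental block (two, in the centrally symmetric case) where a long edge can sit, and its height is already known. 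Your proposal to ``use $\Ehr_{P^*}$ or induct on $n_P$'' is pointing at the right invariant, but you have not isolated the statement that actually does the work.

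The second missing piece is the mechanism that kills the shear. The paper's Lemma~\ref{lemma:sym_det_fixed} says: once an edge $E$ is placed, the next edge $F$ with prescribed Hermite normal form $\left(\begin{smallmatrix}1&a\\0&r\end{smallmatrix}\right)$ is unique \emph{provided} the far vertex of $F$ lies in the strip $-h_E < u_E(\cdot) \le (r-1)h_E$. What makes this applicable is Lemma~\ref{lemma:sym_height_bound}: any symmetric polygon is contained between $E$ and $-2E$, so $u_E \le 2h_E$, and one then checks $r\ge 3$ using the height bounds (Lemmas~\ref{lemma:height_bound} and~\ref{lemma:height_bound_3sym}). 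Your appeal to ``minimality in the form $\hmax = h_E$'' is the centrally symmetric special case of this, but in the $3$-symmetric case one only has $\hmax \le 2h_E$, and the argument needs the full strip bound together with $r\ge 3$.

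Finally, the mixed case is not disposed of by periodicity of $\dirbasket$ alone (both $\Cc\times 2$ and $\Cc'\times 3$ could be empty, or $6$-periodic). The paper rules it out cleanly using the same Ehrhart computation: the coefficient pattern $1,\ldots,1,3,\ldots,3,5$ versus $1,\ldots,1,4$ distinguishes the two symmetry types immediately. The empty-basket case is handled separately by checking the six polygons in Figure~\ref{fig:empty_basket_syms} against the classification of~\cite{minimality}.
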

As we will see later in Proposition~\ref{prop:empty_basket}, the above theorem holds for symmetric Fano polygons with empty basket~$\Bc$. Thus, the next few subsections will mainly focus on the case~$\Bc \neq \varnothing$.
\subsection{Observation of both behaviours}\label{subsec:both_behaviours}
Before we prove Theorem~\ref{thm:sym_unique}, we will first demonstrate that both possibilities occur. More precisely, we show that there exists a mutation-equivalence class with no symmetric Fano polygons or \KE\ Fano triangles and that there exists a mutation-equivalence class with exactly one of them.
\begin{example}\label{eg:no_KE}
Consider the Fano polygon~$P \coloneqq \conv\set{(2,-3), (1,5), (-1,-2)} \subset \NQ$. Since~$P$ is a triangle with weights~$(3,7,13)$, it is not \KE\ by Lemma~\ref{lemma:KE_triangle_weights}. Further, the edges of~$P$ all have length~$1$ while their heights are~$3$,~$7$, and~$13$ --- all greater than~$1$. Thus, none of the edges of~$P$ are long. Therefore,~$P$ is the only polygon in its mutation-equivalence class. So, the mutation-equivalence class of~$P$ contains no \KE\ polygons. In particular, it contains no symmetric Fano polygons or \KE\ Fano triangles.
\end{example}
We now show that the other possibility can occur.
\begin{example}\label{eg:one_KE}
Consider the Fano polygon~$P \coloneqq \conv\set{(\pm(1,-2), \pm(2,-1), \pm(1,1)} \subset \NQ$. Clearly,~$P$ is centrally symmetric (and thus symmetric). The edges of~$P$ all have length~$1$ and height~$3$; thus, none of the edges of~$P$ are long. Therefore,~$P$ is the only polygon in its mutation-equivalence class. So, the mutation-equivalence class of~$P$ contains exactly one \KE\ polygon. In particular, it contains exactly one symmetric Fano polygon or \KE\ Fano polygon.
\end{example}
In the following subsections, we will show that for symmetric Fano polygons, no other possibility can occur, i.e.\ that if two symmetric Fano polygons are mutation-equivalent, then they are isomorphic.
\subsection{Constraints on centrally symmetric Fano polygons}\label{subsec:constrain_cs}
When mutating polygons, it's the long edges (in particular, the T-singularities) which move and change, while the short edges (i.e.\ the R-singularities) stay the same. Generally speaking, if a polygon has more long edges, then its behaviour under mutation will be richer. As we saw in Example~\ref{eg:no_KE} and Example~\ref{eg:one_KE}, if a polygon has no long edges, then its mutation-equivalence class is trivial; we can conclude that Theorem~\ref{thm:main} holds in this case. For centrally symmetric polygons, the next simplest case is when the polygon has exactly one pair of parallel long edges.
\begin{proposition}\label{prop:one_long_edge_cs}
Let~$P \subset \NQ$ be a centrally symmetric polygon with exactly one pair of long edges~$\pm E$. Let~$Q \subset \NQ$ be a symmetric polygon. Suppose~$P$ is mutation-equivalent to~$Q$. Then~$P \isom Q$.
\end{proposition}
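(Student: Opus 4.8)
The plan is to leverage the directed singularity content as the key mutation-invariant and combine it with the rigidity of centrally symmetric polygons having only one pair of long edges. First I would observe that since $P$ has exactly one pair of long edges $\pm E$, all its other edges are short, hence pure-less (they coincide with their own residues). By the central symmetry, the directed basket $\dirbasket_P$ can be written as $\Cc \times 2$ for some cyclically ordered set $\Cc$ (Remark~\ref{remark:times_notation}), and the only freedom under mutation lies in the single pair $\pm E$. I would mutate $P$ with respect to $E$ (equivalently $-E$; by central symmetry these can be performed as a single ``symmetric'' mutation) and track what happens: each such mutation changes the number $n_{\pm E}$ of primitive T-singularities on the long edges while fixing the directed basket. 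The upshot is that within the mutation-equivalence class, $P$ is connected by this single family of mutations to a finite chain of centrally symmetric polygons, and among them there is a distinguished ``most mutated down'' representative — a minimal one — characterised by Lemma~\ref{lemma:min_poly_condition}.

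Next I would bring in $Q$. Since $Q$ is symmetric, by the earlier proposition it is either centrally symmetric or $3$-symmetric. The directed singularity content is a mutation-invariant, so $\SC(Q) = \SC(P)$; in particular $\dirbasket_Q = \dirbasket_P = \Cc \times 2$. In the $3$-symmetric case, $\dirbasket_Q$ would have to be expressible as $\Cc' \times 3$, forcing $\Cc \times 2 = \Cc' \times 3$ as cyclically ordered sets, which constrains the combinatorics severely — I expect this either to be impossible outright (unless the basket is empty, handled separately) or to force $Q$ into one of a handful of exceptional shapes that can be checked directly. So the bulk of the argument is the centrally symmetric case for $Q$.

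For two centrally symmetric polygons $P, Q$ in the same mutation-equivalence class, each with the same directed basket $\Cc \times 2$, I would argue that the ``fixed'' short edges pin down everything except the long edges, and the long-edge data is then determined by closing up the polygon: the vertices of $P$ (resp.\ $Q$) are obtained by walking around the boundary, the edge directions and heights of the short edges being prescribed by $\Cc$, and the requirement that the walk closes up (together with the central symmetry and primitivity of vertices) leaves only a discrete, in fact finite, set of possibilities parametrised essentially by the length/height of the long edge. A mutation on the long edge is an explicit shear; I would show that up to such shears and $\GL_2(\ZZ)$-equivalence there is a unique normal form, so $P$ and $Q$ must both reduce to it, giving $P \isom Q$. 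Here the hypotheses that $\pm E$ is the \emph{only} pair of long edges in $P$ is what makes the normal form unique — it prevents competing mutations that could lead to genuinely different minimal representatives.

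The main obstacle I anticipate is the closing-up / normal-form step: showing that the short-edge data $\Cc$ plus central symmetry really does determine the polygon up to the one-parameter family of shears along $\pm E$, and that this family has a unique $\GL_2(\ZZ)$-reduced member. This requires carefully using the orientation information packaged into the Hermite normal forms (which is precisely why the \emph{directed} singularity content was introduced) rather than just the unoriented singularity types, since the latter would not rigidify the gluing. A secondary subtlety is handling the determinant $-1$ transformations: an isomorphism between $P$ and $Q$ might reverse orientation, in which case $\dirbasket_Q = \dirbasket_P^*$ reversed, so I would need to check that the argument is symmetric under the $H \mapsto H^*$, reverse-order operation, or else absorb it at the start by fixing an orientation.
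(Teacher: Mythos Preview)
Your proposal is considerably more elaborate than necessary, and it misses the single observation that makes the paper's proof almost immediate. The paper argues as follows: let $w$ be the primitive inner normal to $E$. Any mutation of $P$ must be along $\pm w$ (since $\pm E$ are the only long edges), and the resulting polygon again has its long edges, if any, with normals in $\{w,-w\}$ (the short edges are merely sheared and remain short). By induction, \emph{every} polygon in the mutation-equivalence class of $P$ has at most two long edges, both with normals in $\{w,-w\}$. Now $Q$ has at least one long edge (since $n_Q = n_P \ge 1$); if $Q$ were $3$-symmetric the orbit of that long edge under the order-$3$ automorphism would give three long edges, a contradiction. Hence $Q$ is centrally symmetric with exactly the pair of long edges with normals $\pm w$, and by the central symmetry each carries $n_P/2$ primitive T-singularities. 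This determines $Q$ uniquely in the chain, so $Q \cong P$.

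Compared with this, your plan has two genuine weak points. First, your treatment of the $3$-symmetric case via the equation $\Cc \times 2 = \Cc' \times 3$ on directed baskets is not a contradiction: this equality holds whenever the basket has period dividing $6$ (and trivially when it is empty), so you would need a separate argument there anyway. The paper's long-edge count bypasses this entirely. Second, your ``closing-up / normal form'' step for the centrally symmetric case is essentially an attempt to prove a special instance of the later Proposition~\ref{prop:one_cs_per_extended_basket}, which is real work; here it is unnecessary because the one-dimensionality of the mutation class (only mutations along $\pm w$ are ever available) together with the even split forced by central symmetry already pins $Q$ down. You also write that the class contains ``a finite chain of centrally symmetric polygons'': in fact the intermediate polygons in the chain are \emph{not} centrally symmetric (a single mutation along $w$ produces unequal T-counts on the two long edges), and it is precisely this that singles out $P$ as the unique centrally symmetric representative.
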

\begin{proof}
Let~$w \in M$ be the primitive inner normal to~$E$. Then~$-w$ is the primitive inner normal to~$-E$. Consider~$\mut_w(P,F)$, where~$F$ is a factor of~$P$ with respect to~$w$. Then again, this polygon will have at most two long edges, whose primitive inner normals are in~$\set{w, -w}$. Thus, by induction, all polygons mutation-equivalent to~$P$ will have at most two long edges with inner normals in~$\set{w, -w}$.

Now consider the symmetric polygon~$Q$, which is mutation-equivalent to~$P$. Since the number of primitive T-singularities is invariant under mutation, and~$P$ has at least one primitive T-singularity, it follows that~$Q$ has at least one long edge. If~$Q$ were~$3$-symmetric, then it would have at least three long edges. So, since~$Q$ has at most two long edges, it must be centrally symmetric. In particular, the number of primitive T-singularities on its two long edges must be equal. Thus,~$Q$ is isomorphic to~$P$.
\end{proof}
Now, we are left to consider centrally symmetric Fano polygons with at least two pairs of parallel long edges. The rest of the subsection is dedicated to constraining the combinatorics of these polygons. In fact, in Corollary~\ref{cor:num_edges_cs}, we discover that these polygons must have \emph{exactly} two pairs of parallel long edges with at most~$8$ primitive T-singularities in total. This restricts the complexity of their behaviour under mutation, making the study of these objects under mutation realistic.
\begin{lemma}\label{lemma:det_one}
Let~$P \subset \NQ$ be a centrally symmetric Fano polygon with non-empty basket~$\Bc$ and two pairs of opposite long edges~$\pm E_1$ and~$\pm E_2$. Then the primitive normals to~$E_1$ and~$E_2$ span the dual lattice~$M$.
\end{lemma}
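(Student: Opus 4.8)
The plan is to argue by contradiction: suppose the primitive inner normals $u_1, u_2 \in M$ to $E_1$ and $E_2$ do \emph{not} span $M$, and derive that the basket $\Bc$ must be empty, contradicting the hypothesis. Since $u_1, u_2$ are primitive and not a lattice basis, the sublattice they generate has index $d \ge 2$ in $M$; equivalently, after an $\SL_2(\ZZ)$-change of coordinates we may normalise $u_1 = (1,0)$ and write $u_2 = (a, d)$ with $d \ge 2$ and $0 \le a < d$, $\gcd(a,d) = 1$. Central symmetry means $P$ is cut out by the four supporting inequalities $\pm u_1(\vec x) \ge -h_1$ and $\pm u_2(\vec x) \ge -h_2$, together possibly with shorter edges; but since we only need information about the long edges $\pm E_1, \pm E_2$ to control the basket (the short edges' contribution to $\dirbasket$ is what we want to pin down, and long edges with long residue contribute nothing), the key is to understand the two cones at the vertices where $E_1$ meets $E_2$.

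The main step is a local computation at a vertex $\vec v = E_1 \cap E_2$ (and its antipode $-\vec v$). The cone $\sigma$ at $\vec v$ is dual to the two-dimensional cone in $\MQ$ spanned by $u_1$ and $u_2$; its multiplicity (the order of the local class group, hence the denominator $r$ appearing in its singularity type $\frac1r(1,c)$) is exactly $\det\begin{pmatrix} 1 & a \\ 0 & d \end{pmatrix} = d$ divided by suitable factors coming from the actual lattice lengths $\ell_{E_1}, \ell_{E_2}$ and heights $h_1, h_2$ of the edges. I would make this precise: the vertex $\vec v$ has the form $\vec v = (\text{something})$ determined by $u_1(\vec v) = -h_1$, $u_2(\vec v) = -h_2$, and the edge $E_i$ runs from $\vec v$ in the direction of the primitive generator of $\ker u_i$. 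Writing out the two ray generators of $\sigma$ and computing the $2\times 2$ determinant shows that $d \mid \det A_\sigma$, so $d$ divides the multiplicity of the cone $\sigma$; since $d \ge 2$, the cone $\sigma$ is singular, and crucially the residue $\res(H_{E_1})$ or $\res(H_{E_2})$ — whichever edge realises this singular cone structure — picks up a nontrivial contribution unless $\ell_{E_i} \equiv 0 \pmod{h_i}$ \emph{and} the extra $d$ is absorbed. The cleanest route: show directly that if $d \ge 2$ then at least one of $E_1, E_2$ fails to be pure, i.e.\ has nonempty residue, which means $\Bc \ne \varnothing$ is automatic — wait, that is the wrong direction.

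Let me restate the key step correctly: I expect to show instead that the \emph{opposite} holds — if $u_1, u_2$ span $M$ then one can still have nonempty basket, so the contradiction must come from assuming they do \emph{not} span. Concretely, if $\langle u_1, u_2\rangle \subsetneq M$ with index $d \ge 2$, then by central symmetry $P \subseteq \{\vec x : |u_1(\vec x)| \le h_1,\ |u_2(\vec x)| \le h_2\}$, a parallelogram $\Pi$; and $P$ contains the parallelogram $\Pi'$ spanned by the two long edges. I would show $\Pi' = \Pi$, forcing $P = \Pi$ to be exactly this parallelogram (any additional vertex would lie outside $\Pi$), hence $P$ has \emph{only} the edges $\pm E_1, \pm E_2$ and no short edges; then I compute that the index-$d$ condition forces each $E_i$ to be pure (length a multiple of height with the residue cone trivial), giving $\Bc = \varnothing$, the desired contradiction. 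The hard part will be the lattice bookkeeping showing that the non-spanning condition is exactly what makes the parallelogram $P$ have only T-singularities on its long edges; I would handle this by the Hermite-normal-form description from Section~\ref{section:prelims}, computing $H_{E_i}$ explicitly in the normalised coordinates $u_1 = (1,0)$, $u_2 = (a,d)$ and checking that the residue is empty precisely when $d \nmid$ a certain expression, which the non-spanning assumption guarantees.
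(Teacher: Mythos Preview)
Your setup is right --- normalise $u_1=(1,0)$, $u_2=(a,d)$ with $d\ge2$ and aim to show $P$ is a pure parallelogram so that $\Bc=\varnothing$ --- but the mechanism by which $d\ge2$ forces this is missing, and your two sketched routes do not supply it. The first attempt (cone multiplicity at the vertex $E_1\cap E_2$) you correctly abandon: that vertex need not even exist, and knowing that a cone is singular says nothing about whether the edges are pure. The second attempt asserts ``show $\Pi'=\Pi$'' without any argument; in fact the inclusion $\Pi'\subseteq\Pi$ is not an equality in general, and your final claim that ``the residue is empty precisely when $d\nmid$ a certain expression'' is not a correct characterisation of purity.

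The key step you are missing is a length-versus-width count. Because $u_2=(a,d)$, the primitive direction vector along $E_2$ is $(d,-a)$: each unit of lattice length on $E_2$ advances $d$ steps in the $x$-direction. But $P$ is trapped in the vertical strip $|x|\le h_1$ (from the edges $\pm E_1$), so $d\,\ell_2 \le 2h_1$; by symmetry $d\,\ell_1 \le 2h_2$. Combining these with the long-edge inequalities $\ell_i\ge h_i$ forces $d=2$ and all inequalities to be equalities, i.e.\ $\ell_1=h_1=h_2=\ell_2$. Equality in the strip bound means $E_2$ stretches all the way from $x=-h_1$ to $x=h_1$, leaving no room for any further edges between $E_1$ and $-E_1$; hence $P$ is exactly the quadrilateral on $\pm E_1,\pm E_2$, each edge satisfies $\ell_i=h_i$ (one primitive $T$-singularity, empty residue), and $\Bc=\varnothing$. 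That is the contradiction. Without this counting argument your proposal does not close.
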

\begin{proof}
We may apply an appropriate unimodular transformation so that the primitive inner normal to~$E_1$ is~$u_1 = \ed{1}$. We may also assume that the edge~$E_2$ is on the bottom. Thus, its primitive inner normal vector~$u_2 \coloneqq (a,b)^t \in M$ will have~$b > 0$, as in Figure~\ref{fig:2_long_edges}. We obtain~$\det(u_1, u_2) = b$. Thus,~$u_1$ and~$u_2$ span~$M$ if and only if~$b=1$. So, assume towards a contradiction that~$b \ge 2$.
 
Consider the primitive direction vector for~$E_2$, which is~$(b,-a)$. We can see that the starting vertex of~$E_2$ uses up one lattice point, and each unit segment of~$E_2$ uses up~$b$ lattice points. Since there are~$2h_1 + 1$ lattice points between~$E_1$ and~$-E_1$ for~$E_2$ to go through, we obtain the inequality~$b \ell_2 \le 2h_1$. We can similarly derive that~$b \ell_1 \le 2h_2$. Now, since~$E_2$ is a long edge and~$E_1$ has a smaller height than~$E_2$, we obtain that~$b h_1 \le 2 h_1$. Thus,~$b = 2$ and the inequalities become equalities, giving~$\ell_1 = h_1 = h_2 = \ell_2$. Further, all~$2h_1 + 1$ lattice points were used up by~$E_2$. This means that no more lattice points are left for any other edges going from~$E_1$ to~$-E_1$ to go through. So, no more pairs of edges exist other than~$\pm E_1$ and~$\pm E_2$. Thus,~$P$ is a centrally symmetric quadrilateral with only T-singularities, which contradicts our assumption that the basket~$\Bc$ is non-empty.
\end{proof}
\begin{figure}[h]
\centering
\begin{subfigure}{.45\textwidth}
 \begin{tikzpicture}[scale=0.5][line cap=round,line join=round,>=triangle 45,x=1cm,y=1cm]
 \begin{axis}[
 x=1cm,y=1cm,
 axis lines=middle,
 ymajorgrids=true,
 xmajorgrids=true,
 xmin=-6,
 xmax=6,
 ymin=-10,
 ymax=4,
 xtick={-6,-9,...,6},
 ytick={-10,-9,...,4}
 ]
 
 \draw [line width=2pt] (-5,3)-- (-5,-2);
 \draw [line width=2pt] (-4,-3)-- (2,-9);
 \draw [line width=2pt] (5,2)-- (5,-3);
 
 \begin{scriptsize}
 \draw [fill=black] (-5,3) circle (2.5pt);
 \draw [fill=black] (-5,-2) circle (2.5pt);
 \draw[color=black] (-5.5, 1.5) node {\huge{$E_1$}};
 \draw [fill=black] (-4,-3) circle (2.5pt);
 \draw [fill=black] (2,-9) circle (2.5pt);
 \draw[color=black] (-1.4, -6.3) node {\huge{$E_2$}};
 \draw [fill=black] (5,2) circle (2.5pt);
 \draw [fill=black] (5,-3) circle (2.5pt);
 \draw[color=black] (5.5, -1.5) node {\huge{$-E_1$}};
 \end{scriptsize}
 \end{axis}
 \end{tikzpicture}
 \caption{A configuration of two long edges of a centrally symmetric polygon; see the proof of Lemma~\ref{lemma:det_one}}
 \label{fig:2_long_edges}
\end{subfigure}
\begin{subfigure}{.45\textwidth}
 \begin{tikzpicture}[scale=0.5][line cap=round,line join=round,>=triangle 45,x=1cm,y=1cm]
 \begin{axis}[
 x=1cm,y=1cm,
 axis lines=middle,
 ymajorgrids=true,
 xmajorgrids=true,
 xmin=-8,
 xmax=8,
 ymin=-6,
 ymax=6,
 xtick={-8,...,8},
 ytick={-6,...,6}]
 
 \fill[line width = 2pt, color = blue, fill = blue, fill opacity = 0.05] (1,5) -- (6,5) -- (7,4) -- (7,-3) -- (5,-4) -- (-1,-5) -- (-6,-5) -- (-7,-4) -- (-7,3) -- (-5,4) -- cycle;
 \draw [line width=2pt,color=black] (1,5) -- (6,5);
 \draw [line width=2pt,color=black] (6,5) -- (7,4);
 \draw [line width=2pt,color=black] (7,4) -- (7,-3);
 \draw [line width=2pt,color=black] (7,-3) -- (5,-4);
 \draw [line width=2pt,color=black] (5,-4) -- (-1,-5);
 \draw [line width=2pt,color=black] (-1,-5)-- (-6,-5);
 \draw [line width=2pt,color=black] (-6,-5)-- (-7,-4);
 \draw [line width=2pt,color=black] (-7,-4)-- (-7,3);
 \draw [line width=2pt,color=black] (-7,3)-- (-5,4);
 \draw [line width=2pt,color=black] (-5,4)-- (1,5);
 
 \begin{scriptsize}
 \draw [fill=black] (1,5) circle (2.5pt);
 \draw [fill=black] (6,5) circle (2.5pt);
 \draw [fill=black] (7,4) circle (2.5pt);
 \draw [fill=black] (7,-3) circle (2.5pt);
 \draw [fill=black] (5,-4) circle (2.5pt);
 \draw [fill=black] (-1,-5) circle (2.5pt);
 \draw [fill=black] (-6, -5) circle (2.5pt);
 \draw [fill=black] (-7, -4) circle (2.5pt);
 \draw [fill=black] (-7, 3) circle (2.5pt);
 \draw [fill=black] (-5, 4) circle (2.5pt);
 \draw[color=black] (3.5, 5.5) node {\huge{$-E_2$}};
 \draw[color=black] (7.5, 1.5) node {\huge{$-E_1$}};
 \draw[color=black] (-3.5, -5.5) node {\huge{$E_2$}};
 \draw[color=black] (-7.5, -1.5) node {\huge{$E_1$}};
 \end{scriptsize}
 \end{axis}
 \end{tikzpicture}
 \caption{An example of a centrally symmetric Fano polygon with exactly two pairs of long edges~$\pm E_1, \pm E_2$ whose primitive normals span the dual lattice~$M$. Its edge data is described in Example~\ref{eg:edge_data}.}
 \label{fig:cs_fano_example}
\end{subfigure}
\end{figure}
\begin{remark}
\begin{enumerate}
\item
Let~$P \subset \NQ$ be a centrally symmetric Fano polygon with two pairs of long edges~$\pm E_1$ and~$\pm E_2$ whose primitive inner normal vectors are~$\pm u_1$ and~$\pm u_2$, respectively. If~$\Bc_P \neq \varnothing$ then, by Lemma~\ref{lemma:det_one},~$\det(u_1, u_2) = 1$. The matrix~$V$ with rows~$u_1$ and~$u_2$ is invertible in~$\GL_2(\ZZ)$. Thus, we can apply a unimodular transformation to~$P$ so that the primitive inner normals to~$E_1$ and~$E_2$ become~$\ed{1}$ and~$\ed{2}$, respectively.
\item
Later, in~\S\ref{subsec:proof_sym}, we will be working with the edge data~$\Ec(P)$ and directed basket~$\dirbasket_P$ of~$P$. Thus, we will want to only transform~$P$ by elements of~$\SL_2(\ZZ)$ so that this data stays the same. If the matrix~$V$ has determinant~$-1$, we instead simply consider the matrix~$V'$ with rows~$u_1$ and~$-u_2$. This now has determinant~$1$, and so we can apply an~$\SL_2(\ZZ)$-transformation to~$P$ so that the primitive inner normals to~$E_1$ and~$-E_2$ become~$\ed{1}$ and~$\ed{2}$, respectively.
\end{enumerate}
\end{remark}
We now give the following lemma, which helps us to prove Proposition~\ref{prop:heights_fixed_cs}.
\begin{lemma}\label{lemma:height_bound}
Let~$P \subset \NQ$ be a centrally symmetric Fano polygon with non-empty basket~$\Bc \neq \varnothing$. Suppose that~$P$ has two pairs of opposite long edges~$\pm E_1$ and~$\pm E_2$ with heights~$h_1 \le h_2$. Then,~$2 \le h_1 \le h_2 < 2h_1$.
\end{lemma}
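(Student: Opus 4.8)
The plan is to normalise the configuration by a unimodular change of coordinates and then run two elementary length counts. Since $\Bc \neq \varnothing$, Lemma~\ref{lemma:det_one} applies: the primitive inner normals $u_1$ of $E_1$ and $u_2$ of $E_2$ span $M$, so the matrix with rows $u_1,u_2$ lies in $\GL_2(\ZZ)$, and after applying a suitable element of $\GL_2(\ZZ)$ to $P$ we may assume $u_1 = \ed{1}$ and $u_2 = \ed{2}$ (heights are $\GL_2(\ZZ)$-invariant, so nothing is lost). Then $E_1$ and $-E_1$ lie on the vertical lines $x = -h_1$ and $x = h_1$, while $E_2$ and $-E_2$ lie on the horizontal lines $y = -h_2$ and $y = h_2$; consequently $P \subseteq [-h_1,h_1]\times[-h_2,h_2]$, and moreover $E_1 = P\cap\{x=-h_1\}$ and $E_2 = P\cap\{y=-h_2\}$.

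For the upper bound, observe that $E_2$ is horizontal and lies in the strip $-h_1 \le x \le h_1$, so its lattice length satisfies $\ell_2 \le 2h_1$; since $E_2$ is long, $\ell_2 \ge h_2$, and hence $h_2 \le 2h_1$. To make this strict, I would suppose $h_2 = 2h_1$, so that $\ell_2 = 2h_1$ and $E_2$ runs from $(-h_1,-h_2)$ to $(h_1,-h_2)$. Its left endpoint lies in $P\cap\{x=-h_1\} = E_1$, hence is an endpoint of $E_1$; comparing $-E_1 = P\cap\{x=h_1\}$ with the central-symmetric image of $E_1$ then forces $E_1$ to run from $(-h_1,-h_2)$ to $(-h_1,h_2)$. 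So $P$ contains all four corners of the box and, being convex and contained in it, equals the rectangle $[-h_1,h_1]\times[-2h_1,2h_1]$. But this rectangle has no R-singularities (each horizontal edge has length equal to its height $2h_1$, and each vertical edge has length $4h_1$, a multiple of its height $h_1$), so $\Bc_P = \varnothing$, contradicting the hypothesis. Hence $h_2 < 2h_1$.

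For the lower bound, suppose $h_1 = 1$. Then the upper bound gives $h_2 < 2$, so $h_2 = 1$ and $P \subseteq [-1,1]^2$. I claim any Fano polygon contained in $[-1,1]^2$ is reflexive; granting this, $\Bc_P = \varnothing$ (reflexive polygons have empty basket), the required contradiction. For the claim, let $E$ be an edge of $P$ with primitive inner normal $u = (p,q)$ and height $h$; by the eight symmetries of the square we may assume $p \ge q \ge 0$. All lattice points of $E$ lie in $[-1,1]^2$ on the line $u(\vec{x}) = -h$, so at least two of the nine values of $u$ on $\{-1,0,1\}^2$ must equal $-h$; inspecting these values shows this can only happen for $u \in \{(1,0),(1,1),(2,1)\}$, each of which gives $h = 1$. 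This proves the claim, and with it $h_1 \ge 2$, completing the argument. (Alternatively, one can enumerate directly: since $E_1, -E_1, E_2, -E_2$ are edges meeting all four sides of the square, a short case check on how far these segments reach into the corners shows $P$ is either $[-1,1]^2$ or the reflexive hexagon $\conv\{\pm(1,0),\pm(0,1),\pm(1,1)\}$, both with empty basket.)

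The main obstacle is this last step, $h_1 \ge 2$: the upper bound is a one-line length count together with a single degenerate case, whereas ruling out $h_1 = 1$ relies on the fact---itself needing a short but genuine case analysis on primitive normals---that the square $[-1,1]^2$ is too small to contain a non-reflexive Fano polygon.
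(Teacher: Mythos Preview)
Your proof is correct and follows the same overall strategy as the paper: normalise via Lemma~\ref{lemma:det_one}, bound $\ell_2$ by the width of the vertical strip, and rule out the degenerate cases by showing the basket would be empty. The organisation differs in a useful way, though. The paper proves $h_1\ge 2$ first and then, in the case $h_2=2h_1$, invokes primitivity of the rectangle's vertices to force $h_1=1$ and fall back on the lower-bound argument; you instead handle $h_2=2h_1$ directly by computing the basket of the rectangle, which makes the upper bound self-contained. This lets you reverse the order: with $h_2<2h_1$ already in hand, the assumption $h_1=1$ immediately gives $h_2=1$, so you never need the paper's separate $h_2=2$ subcase. Your treatment of $P\subseteq[-1,1]^2$ is also slightly different: rather than enumerating the two candidate polygons as the paper does, you prove the more general fact that every Fano polygon inside $[-1,1]^2$ is reflexive by classifying the primitive normals that can support an edge with two lattice points in the square. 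Both routes are short; yours trades a small enumeration for a small classification and removes one case split.
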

\begin{proof}
We may assume without loss of generality that the primitive inner normals to~$E_1$ and~$E_2$ are~$\ed{1}$ and~$\ed{2}$, respectively. We first show that~$h_1 \ge 2$. Assume towards a contradiction that~$h_1 = 1$. Then~$P$ is contained in the strip~$\set{-1 \le x \le 1}$. So,~$E_2$ has length at most~$2$. Now, since~$E_2$ is long, it must have height~$h_2 = 1$ or~$h_2 = 2$. We explore both possibilities.

Suppose that~$h_2 = 1$. Then~$P$ is contained in the box~$\set{-1 \le x, y \le 1}$. Up to isomorphism, there are two polygons in this box which have long edges~$E_1$ and~$E_2$ with the prescribed heights: either~$P = \conv\set{\pm(1,1), \pm(-1,1)}$ or~$P = \conv\set{\pm(1,0), \pm(1,1), \pm(0,1)}$. In either case,~$P$ has an empty basket~$\Bc = \varnothing$, which is a contradiction.

Now suppose that~$h_2 = 2$. Then~$P$ must be the quadrilateral~$\conv\set{\pm(1,2), \pm(-1,2)}$. This has empty basket~$\Bc = \varnothing$, so we again reach a contradiction. Thus, we have shown that~$h_1 \ge 2$.

Finally, we show that~$h_2 < 2h_1$. Note that~$P$ is contained in the strip~$\set{-h_1 \le x \le h_1}$, which implies that~$E_2$ has length~$\ell_2 \le 2h_1$. Since~$E_2$ is a long edge, we obtain that~$h_2 \le 2h_1$. We see that it now remains to show that~$h_2 \neq 2h_1$.

Suppose towards a contradiction that~$h_2 = 2h_1$. Then~$P$ must be the rectangle with vertices~$\pm(h_1, 2h_1)$ and~$\pm(-h_1, 2h_1)$. Since~$P$ is Fano, its vertices must be primitive, i.e.\ $\gcd(h_1, 2h_1) = 1$. Thus,~$h_1 = 1$. But we have already seen above that this results in a contradiction. Therefore,~$h_2 < 2h_1$.
\end{proof}
In order to decide whether two polygons~$P$ and~$Q$ are mutation-equivalent, we can compare their mutation-invariants. One such invariant is the Ehrhart series of the dual polygon, i.e.\ $\Ehr_{P^*}(t) = \Ehr_{Q^*}(t)$. We find a striking pattern in the coefficients of the Ehrhart series.
\begin{proposition}\label{prop:heights_fixed_cs}
Let~$P \subset \NQ$ be a centrally symmetric Fano polygon with non-empty basket of R-singularities. Suppose~$P$ has two non-parallel long edges~$E_1$ and~$E_2$, which have primitive inner normals~$u_1$ and~$u_2$ and heights~$h_1$ and~$h_2$, respectively, with~$h_1 \le h_2$. Then,
\[
kP^* \cap M =
\begin{cases}
 \set{0}, & 0 \le k < h_1 \\
 \set{0, \pm u_1}, & h_1 \le k < h_2 \\
 \set{0, \pm u_1, \pm u_2}, & k = h_2.
\end{cases}
\]
\end{proposition}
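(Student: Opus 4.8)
The plan is to reduce everything to an elementary estimate on the linear functionals $u\in M$ restricted to $P$, after placing $P$ in a normal form adapted to $u_1$ and $u_2$. Since $P$ is centrally symmetric, $\pm E_1$ and $\pm E_2$ are four distinct edges forming two pairs of opposite long edges, so Lemma~\ref{lemma:det_one} applies and $\{u_1,u_2\}$ is a $\ZZ$-basis of $M$. Because the claim is invariant under applying a $\GL_2(\ZZ)$-transformation simultaneously to $P$ and to the $u_i$, I may assume $u_1=\ed{1}$ and $u_2=\ed{2}$. Then $E_1$ lies in the line $\{x=-h_1\}$ and $E_2$ in $\{y=-h_2\}$, both with lattice endpoints, and taking $-E_1,-E_2$ into account, $P$ is contained in the rectangle $\{-h_1\le x\le h_1\}\cap\{-h_2\le y\le h_2\}$. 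By Lemma~\ref{lemma:height_bound}, $2\le h_1\le h_2<2h_1$, and these bounds will be used repeatedly.

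\textbf{Reformulation.} For $u\in M$ put $M_u\coloneqq\max_{\vec{x}\in P}u(\vec{x})$. Since $-P=P$ we have $\min_{\vec{x}\in P}u(\vec{x})=-M_u$, so $u\in kP^*$ if and only if $M_u\le k$, and moreover $M_u=\max_{\vec{x}\in P}|u(\vec{x})|$. Also, for any edge $E$ of $P$ with inner normal $u_E$ and height $h_E$, the functional $u_E$ takes the value $-h_E$ on $E$, takes nothing below $-h_E$ on $P$, and hence (by central symmetry) takes the value $h_E$ on $-E$; thus $M_{u_E}=h_E$. In particular $M_{\vec{0}}=0$, $M_{\pm u_1}=h_1$ and $M_{\pm u_2}=h_2$, which already yields the inclusions $\{\vec{0}\}\subseteq kP^*$ for all $k$, $\{\vec{0},\pm u_1\}\subseteq kP^*$ for $k\ge h_1$, and $\{\vec{0},\pm u_1,\pm u_2\}\subseteq kP^*$ for $k\ge h_2$.

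\textbf{The key estimate.} It now suffices to show $M_u\ge h_2+1$ for every $u=(p,q)\in M\setminus\{\vec{0},\pm u_1,\pm u_2\}$; the three displayed equalities then follow at once from $kP^*\cap M=\setcond{u\in M}{M_u\le k}$. Write the endpoints of $E_2$ as $(x_0,-h_2)$ and $(x_0+\ell_2,-h_2)$ with $\ell_2$ the length of $E_2$. As $E_2$ is long, $\ell_2\ge h_2\ge h_1$, and since $E_2\subseteq P\subseteq\{|x|\le h_1\}$ this forces $x_0\le h_1-\ell_2\le 0$ and $x_0+\ell_2\ge\ell_2-h_1\ge 0$; since both endpoints are primitive and $h_2\ge 2$, the value $0$ is impossible, so in fact $x_0\le -1$ and $x_0+\ell_2\ge 1$. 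If $p\ne 0$ and $q\ne 0$, then (replacing $u$ by $-u$ if needed, which preserves $M_u$) I may take $p\ge 1$: if $q\ge 1$ then $u(x_0,-h_2)=px_0-qh_2\le -1-h_2$, while if $q\le -1$ then $u(x_0+\ell_2,-h_2)=p(x_0+\ell_2)-qh_2\ge 1+h_2$, so $M_u\ge h_2+1$ in either case. If $q=0$ then $|p|\ge 2$ and $|u(\vec{v})|=|p|h_1\ge 2h_1\ge h_2+1$ for every vertex $\vec{v}$ of $-E_1$ (using $h_2<2h_1$). If $p=0$ then $|q|\ge 2$ and $|u(\vec{v})|=|q|h_2\ge 2h_2\ge h_2+1$ for every vertex $\vec{v}$ of $-E_2$. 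This completes the estimate, and with it the proposition.

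\textbf{Expected difficulty.} The delicate point is the mixed case $p,q\ne 0$: the bound $M_u\ge h_2+1$ is sharp (for instance $M_{(1,1)}$ can equal $h_2+1$), so an estimate using only that $P$ sits in the box $\{|x|\le h_1,\ |y|\le h_2\}$ — which loses roughly a factor of two — will not do. One must exploit that the long edge $E_2$ stretches essentially the full width of the box, which is precisely what pushes its endpoints strictly past the $y$-axis on both sides; the primitivity of the vertices together with $h_2\ge 2$ and the strict inequality $h_2<2h_1$ from Lemma~\ref{lemma:height_bound} are exactly what upgrade the weak inequalities to $M_u\ge h_2+1$.
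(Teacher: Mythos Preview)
Your proof is correct and follows essentially the same route as the paper. Both normalize via Lemma~\ref{lemma:det_one} to $u_1=\ed{1}$, $u_2=\ed{2}$, both invoke Lemma~\ref{lemma:height_bound} for $h_2<2h_1$ and $h_2\ge 2$, and both hinge on the observation that the endpoints of $E_2$ lie strictly on opposite sides of the $y$-axis (primitivity plus $h_2\ge 2$ excludes $x=0$). The only organizational difference is that the paper first excludes $\pm(1,1)^t$ and $\pm(-1,1)^t$ from $h_2P^*$ and then uses a convexity/cone argument to rule out all $u$ with both coordinates nonzero, whereas you bypass that step by evaluating an arbitrary mixed $u=(p,q)$ directly on one of the two endpoints of $E_2$; your formulation via $M_u$ makes this uniform and slightly cleaner, but the content is the same.
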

\begin{proof}
The inclusion ``$\supseteq$'' is clear; thus, it is enough to show that~$h_2 P^* \cap M \subseteq \set{0, \pm u_1, \pm u_2}$. By Lemma~\ref{lemma:det_one}, we may assume that~$u_1 = (1,0)^t$ and~$u_2 = (0,1)^t$. We first show that~$\pm(1,1)^t \not\in h_2 P^*$. This is equivalent to showing that there exists a point~$(x,y) \in P$ satisfying~$x+y < -h_2$. Let~$(a,-h_2)$ be the left vertex of~$E_2$, where~$a$ is an integer satisfying~$-h_1 \le a \le h_1 - h_2 \le 0$. In fact,~$a < 0$. Otherwise,~$(0,-h_2)$ would be a vertex of~$P$. By Lemma~\ref{lemma:height_bound},~$h_2 \ge 2$. On the other hand, since~$P$ is Fano, its vertices must be primitive. So, we must have that~$h_2 = 1$, which is a contradiction. Thus, we have~$(a,-h_2) \in P$ satisfying~$a < 0$. Equivalently,~$a - h_2 < -h_2$. We can conclude that~$(1,1)^t$ is not in~$h_2 P^*$.

Using the same logic as above, we can also show that~$\pm(-1,1)^t \not\in h_2 P^*$. Furthermore, since~$(1,1)^t \not\in h_2P^*$, we can rule out from~$h_2 P^*$ all points in the region~$(1,1)^t + \cone((1,0)^t, (0,1)^t)$. We can rule out similarly defined regions from the other 3 quadrants; see Figure~\ref{fig:ehr_cs_proof}. It now remains to treat the points on the axes.

\begin{figure}[ht]
    \centering
    \begin{tikzpicture}[scale=0.65][line cap=round,line join=round,x=1cm,y=1cm]
        \begin{axis}[
            x=1cm,y=1cm,
            axis lines=middle,
            ymajorgrids=true,
            xmajorgrids=true,
            xmin=-3,
            xmax=3,
            ymin=-3,
            ymax=3,
            xtick={-2,...,2},
            ytick={-2,...,2}
            ]
            \clip(-3,-3) rectangle (3,3);
            
            \fill[fill=red,fill opacity=0.2] (0,1) -- (1,0) -- (0,-1) -- (-1,0) -- cycle;
            
            \fill[fill=gray,fill opacity=0.1] (-1,-1) -- (-5,-1) -- (-5,-5) -- (-1,-5) -- cycle;
            \fill[fill=gray,fill opacity=0.1] (-1,1) -- (-5,1) -- (-5,5) -- (-1,5) -- cycle;
            \fill[fill=gray,fill opacity=0.1] (1,1) -- (5,1) -- (5,5) -- (1,5) -- cycle;
            \fill[fill=gray,fill opacity=0.1] (1,-1) -- (1,-5) -- (5,-5) -- (5,-1) -- cycle;
            
            \draw [line width=2pt,color=red] (0,1)-- (1,0);
            \draw [line width=2pt,color=red] (1,0)-- (0,-1);
            \draw [line width=2pt,color=red] (0,-1)-- (-1,0);
            \draw [line width=2pt,color=red] (-1,0)-- (0,1);
            
            \draw [line width=2pt] (-1,-1)-- (-5,-1);
            \draw [line width=2pt] (-5,-5)-- (-1,-5);
            \draw [line width=2pt] (-1,-5)-- (-1,-1);
            \draw [line width=2pt] (-1,1)-- (-5,1);
            \draw [line width=2pt] (-1,5)-- (-1,1);
            \draw [line width=2pt] (1,1)-- (5,1);
            \draw [line width=2pt] (1,5)-- (1,1);
            \draw [line width=2pt] (1,-1)-- (1,-5);
            \draw [line width=2pt] (5,-1)-- (1,-1);
            
            \begin{scriptsize}
                \draw [fill=black] (1,1) circle (2.5pt);
                \draw [fill=black] (-1,1) circle (2.5pt);
                \draw [fill=black] (-1,-1) circle (2.5pt);
                \draw [fill=black] (1,-1) circle (2.5pt);
            \end{scriptsize}
        \end{axis}
    \end{tikzpicture}
    \caption{In red: the polygon~$\conv\set{\pm u_1, \pm u_2}$.
    In grey: the regions which can be ruled out from~$h_2 P^*$ given that their apexes are not in~$h_2 P^*$.}
    \label{fig:ehr_cs_proof}
\end{figure}

Clearly, the only points of~$h_2 P^*$ on the~$y$-axis are~$0$ and~$\pm u_2$. By Lemma~\ref{lemma:height_bound}, we have~$h_2 < 2h_1$. Therefore, the only points of~$h_2 P^*$ on the~$x$-axis are~$0$ and~$\pm u_1$. Thus, we now have the desired result.
\end{proof}
An immediate implication of Proposition~\ref{prop:heights_fixed_cs} is that the heights of the long edges can be read off the Ehrhart series.
\begin{example}
Consider the polygon~$P \subset \NQ$ from Example~\ref{eg:edge_data}. Let~$Q \subset \NQ$ be a centrally symmetric Fano polygon mutation-equivalent to~$P$. Then it must also have at least two pairs of opposite long edges whose primitive normals span the lattice~$M$. The dual polygon~$P^*$ has Ehrhart series~$\Ehr_{P^*}(t) = 1 + t + t^2 + t^3 + t^4 + 3t^5 + 3t^6 + 5t^7 + O(t^8)$. Since~$Q$ is mutation-equivalent to~$P$, its dual~$Q^*$ has the same Ehrhart series. Since the coefficient jumps from~$1$ to~$3$ at the~$t^5$ term and then to~$5$ at the~$t^7$ term, it follows that~$Q$ has long edges with heights~$5$ and~$7$, just as~$P$ does. This is because, by Proposition~\ref{prop:heights_fixed_cs}, the non-origin lattice points in~$7Q^* \cap M$ must be primitive normals to the long edges of~$Q$.
\end{example}
The main implications of Proposition~\ref{prop:heights_fixed_cs} are summarised in the following corollary.
\begin{corollary}\label{cor:num_edges_cs}
Let~$P$ be a centrally symmetric Fano polygon with non-empty basket. Suppose that~$P$ has at least two pairs of long edges~$\pm E_1$ and~$\pm E_2$. Then the long edges of~$P$ are exactly~$\pm E_1$ and~$\pm E_2$. Moreover, if~$P$ has an R-singularity~$H_{\sigma} \in \dirbasket_P$ with the same height as one of the long edges, then~$\sigma$ will be on a long edge of~$P$, i.e.\ $H_{\sigma} = \res(H_{E_i})$ for some~$i=1,2$. In particular, the number of edges is fixed.
\end{corollary}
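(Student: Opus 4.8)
The plan is to deduce everything from Proposition~\ref{prop:heights_fixed_cs} by reading off information about $P^*$ and then translating it back to the edges of $P$. First I would recall the standard duality dictionary: the vertices of $P^*$ correspond to the edges of $P$, and more precisely, for an edge $E$ of $P$ with primitive inner normal $u_E$ and height $h_E$, the point $\tfrac{1}{h_E}u_E \in \MQ$ is the vertex of $P^*$ dual to $E$; equivalently, $u_E \in h_E P^*$ but $u_E \notin k P^*$ for $k < h_E$ (since $h_E$ is the smallest dilate that "reaches" that facet normal). So the lattice point $u_E$ first appears in $h_E P^* \cap M$ exactly at level $k = h_E$.

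Now let $h_1 \le h_2$ be the heights of the two pairs of long edges $\pm E_1, \pm E_2$ (using Lemma~\ref{lemma:det_one} to normalise $u_1 = \ed{1}$, $u_2 = \ed{2}$, noting the hypotheses of Proposition~\ref{prop:heights_fixed_cs} are met). The key step is: suppose $P$ had a \emph{third} long edge $E_3$, or a short edge whose height $h_3$ satisfies $h_3 \le h_2$. Its primitive inner normal $u_3$ is not parallel to $u_1$ or $u_2$ (distinct edges of a Fano polygon have distinct primitive inner normals, and $\pm u_1, \pm u_2$ are already accounted for by $\pm E_1, \pm E_2$). Then $u_3 \in h_3 P^* \cap M \subseteq h_2 P^* \cap M = \set{0, \pm u_1, \pm u_2}$ by Proposition~\ref{prop:heights_fixed_cs}, forcing $u_3 \in \set{\pm u_1, \pm u_2}$ — a contradiction. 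Hence every edge other than $\pm E_1, \pm E_2$ has height strictly greater than $h_2$; in particular, any \emph{long} edge has length $\ge$ its height $> h_2$, but length $\le$ (its height is bounded by the strip width), which I would rule out exactly as in the proof of Lemma~\ref{lemma:det_one} / Lemma~\ref{lemma:height_bound} using that $P \subseteq \set{-h_1 \le x \le h_1} \cap \set{-h_2 \le y \le h_2}$ — actually more directly: the same Proposition shows no lattice point of $h_2 P^*$ lies outside $\conv\set{\pm u_1, \pm u_2}$, so any edge of $P$ whose height is $\le h_2$ must be $\pm E_1$ or $\pm E_2$, and I claim no edge can have height $> h_2$ and still be long. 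For this last claim I use that $P$ sits inside the rectangle $[-h_1,h_1]\times[-h_2,h_2]$, so every edge has length at most $2h_1 \le 2h_2 \le$ (something), hence a long edge has height at most $2h_1$; combined with $h_2 < 2h_1$ from Lemma~\ref{lemma:height_bound} this still permits heights in $(h_2, 2h_1)$, so I instead argue directly that such an edge cannot exist by a lattice-point count in the strip exactly as in Lemma~\ref{lemma:det_one}. This confirms the long edges are exactly $\pm E_1, \pm E_2$.

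For the second assertion, let $H_\sigma \in \dirbasket_P$ be an R-singularity (so $\sigma$ is the cone over the residue $\res(E)$ of some edge $E$, and $\sigma$ is short) whose height equals $h_1$ or $h_2$. Its primitive inner normal coincides with the primitive inner normal of the edge $E$ containing it. If $\sigma$ sat on a short edge $E$ (i.e. $E \notin \set{\pm E_1, \pm E_2}$), then $E$ would have height $\le h_2$, contradicting the previous paragraph. Hence $E \in \set{\pm E_1, \pm E_2}$, so $H_\sigma = \res(H_{E_i})$ for $i = 1$ or $2$ (using that the residue of $\pm E_i$ has the same Hermite normal form, by Remark~\ref{remark:times_notation}). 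Finally, the number of edges is fixed: the two pairs $\pm E_1, \pm E_2$ are four long edges whose count of primitive T-singularities is determined (the total $n_P$ is a mutation-invariant and is split between the two pairs), and the remaining edges of $P$ are in bijection with the R-singularities of $\dirbasket_P$ together with the residues sitting on $\pm E_1, \pm E_2$ — but $\dirbasket_P$ is a mutation-invariant, and by the above, which residues sit on the long edges is pinned down by their heights, so the count of short edges is also invariant. The main obstacle I anticipate is the edge case analysis needed to exclude a long edge of height strictly between $h_2$ and $2h_1$; this requires the same careful lattice-point-in-a-strip argument used in Lemma~\ref{lemma:det_one}, and possibly appealing again to $\Bc \neq \varnothing$ to discard the degenerate rectangle/quadrilateral configurations.
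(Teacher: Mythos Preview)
Your overall strategy---use Proposition~\ref{prop:heights_fixed_cs} to show that any edge of height at most $h_2$ has inner normal in $\set{0,\pm u_1,\pm u_2}$, hence is one of $\pm E_1,\pm E_2$---is exactly the paper's. But you have made the first assertion harder than it needs to be, and the gap you identify (``a long edge of height strictly between $h_2$ and $2h_1$'') is real in your setup and you have not actually closed it: invoking a strip-count ``as in Lemma~\ref{lemma:det_one}'' is not a proof, and the rectangle bound you sketch does not exclude such heights.

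The paper sidesteps this entirely with a one-line relabelling: since the conclusion is that \emph{all} long edges are among $\pm E_1,\pm E_2$, you may without loss of generality take $E_1$ to be a long edge of \emph{minimum} height and $E_2$ a long edge of \emph{maximum} height among all long edges of $P$. With this choice, any hypothetical third long edge $E_3$ automatically has height $h_3\le h_2$, so $u_3\in h_2P^*\cap M=\set{0,\pm u_1,\pm u_2}$ forces the contradiction immediately---no bounding-box or strip argument needed. The statement that all other edges have height strictly greater than $h_2$, and hence the R-singularity claim, then follow exactly as you wrote. So your proposal is salvageable, but you should replace the unfinished case analysis for $h_3>h_2$ by this WLOG; that is the missing idea.
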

\begin{proof}
Without loss of generality, suppose that the height~$h_1$ of~$E_1$ is the smallest out of all long edges of~$P$ and the height~$h_2$ of~$E_2$ is the largest out of all long edges of~$P$. By Proposition~\ref{prop:heights_fixed_cs}, there are no edges of~$P$ other than~$\pm E_1$ and~$\pm E_2$ which have height between~$1$ and~$h_2$, inclusively; otherwise, the primitive inner normal of such an edge would be a lattice point of~$h_2 P^* \cap M$. Thus, we may conclude that~$\pm E_1$ and~$\pm E_2$ are the only long edges of~$P$ and that all other edges of~$P$ must have height strictly greater than~$h_2$. It now follows that if an R-singularity of~$P$ has height~$h_1$ or~$h_2$, then it must be on one of the long edges of~$P$.
\end{proof}
\subsection{Constraints on 3-symmetric Fano polygons}\label{subsec:constrain_3sym}
As stated at the beginning of~\S\ref{subsec:constrain_cs}, the more long edges a polygon has, the richer its behaviour under mutation will be. The simplest non-trivial case for~$3$-symmetric polygons is when there is~\emph{at least one} triple of long edges. In fact, similarly to the centrally symmetric case, we see that these polygons must have~\emph{exactly one} triple of long edges (Corollary~\ref{cor:num_edges_3sym}).

Since we will reuse the following result in the general setting, we emphasise that~$P$ can have either empty or non-empty basket of R-singularities in the following lemma.
\begin{lemma}\label{lemma:ord3_det}
Let~$P \subset \NQ$ be a~$3$-symmetric Fano polygon with~$G \in \Aut(P)$ having order~$3$. Suppose~$P$ has a long edge~$E$ with primitive inner normal~$u$. Then, one of the following must hold:
\begin{itemize}
\item[(i)] $\det(u,uG) = 3$ and~$X_P = \PP^2$;
\item[(ii)] $\det(u,uG) = 1$.
\end{itemize}
\end{lemma}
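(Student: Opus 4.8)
The plan is to prove that $d := |\det(u, uG)|$ lies in $\set{1,3}$ and that $d = 3$ forces $X_P = \PP^2$; the stated sign of the determinant then follows from the anticlockwise orientation conventions, since replacing $G$ by $G^{-1}$ negates $\det(u,uG)$ and exactly one of $\det(u,uG),\det(u,uG^{-1})$ is positive.

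First I would record the algebraic setup. A nontrivial unipotent element of $\GL_2(\ZZ)$ has infinite order, so the order-$3$ element $G$ has no nonzero fixed vector; hence its characteristic polynomial is $x^2+x+1$ and $G^2+G+I=0$ on both $N$ and $M$. In particular $u + uG + uG^2 = \vec 0$ in $M$. The edges $E$, $GE$, $G^2E$ are three distinct long edges of $P$: they are long because $G$ is unimodular, and distinct because a $G$-fixed edge would force $G$ to fix two linearly independent lattice points, hence $G = I$. As $G$ preserves heights, all three have the same height $h$, and their primitive inner normals form the set $\set{u, uG, uG^2}$. Therefore
\[
P \subseteq T := \setcond{\vec x \in \NQ}{u(\vec x) \ge -h,\ uG(\vec x) \ge -h,\ uG^2(\vec x) \ge -h},
\]
and $T$ is a triangle: the three normals are pairwise non-proportional (as $x^2+x+1$ is irreducible over $\QQ$) and sum to zero, so they positively span $\MQ$, making $T$ bounded with exactly those three facets.

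Next I would compute the normalised volume of $T$. The linear map $\phi\colon \NQ \to \QQ^2$, $\vec x \mapsto (u(\vec x), uG(\vec x))$, sends $N$ onto a sublattice of $\ZZ^2$ of index $d$ and, using $uG^2(\vec x) = -u(\vec x)-uG(\vec x)$, sends $T$ onto the triangle $\set{(y_1,y_2) : y_1 \ge -h,\ y_2 \ge -h,\ y_1+y_2 \le h}$ with vertices $(-h,-h)$, $(2h,-h)$, $(-h,2h)$ and normalised volume $9h^2$; hence $\Vol(T) = 9h^2/d$. On the other hand, $E$ lies on one of the three facet-lines of $T$, so it is contained in the corresponding edge of $T$, which has lattice length $\lambda$ for some $\lambda \in \QQ_{>0}$; since the opposite vertex of $T$ has lattice height $3h$ from that line, $\Vol(T) = 3h\lambda$, so $\lambda = 3h/d$. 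As $E$ is long, $h \le \ell_E \le \lambda = 3h/d$, giving $d \le 3$.

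It remains to exclude $d = 2$ and to analyse $d = 3$. If $d = 2$, the sublattice $L := \langle u, uG\rangle \subseteq M$ is $G$-stable (as $uG \in L$ and $uG^2 = -u-uG \in L$), so $G$ acts on $M/L \isom \ZZ/2\ZZ$; but $\Aut(\ZZ/2\ZZ)$ is trivial, so $G$ acts trivially, and reducing $u'+u'G+u'G^2=\vec 0$ modulo $L$ for $u' \in M$ gives $3u' \in L$. Thus $3M \subseteq L \subseteq M$, forcing $9 = [M:3M] = [M:L]\,[L:3M] = 2\,[L:3M]$, which is absurd. Hence $d \in \set{1,3}$. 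If $d = 3$, then $\lambda = h$, so $h \le \ell_E \le \lambda = h$ forces $E$ to coincide with the facet of $T$ it lies on; applying $G$, the same holds for $GE$ and $G^2E$, so all three facets of $T$ are edges of $P$ and $\partial T \subseteq \partial P$. Since $P \subseteq T$ with both convex, $P = T$. Then $P$ is a triangle whose vertices are primitive (they are vertices of a Fano polygon) and whose barycentre equals the barycentre of the $G$-invariant set $T$, which is a $G$-fixed point and hence $\vec 0$; so $P$ has weights $(1,1,1)$ and is \KE\ by Lemma~\ref{lemma:KE_triangle_weights}. All three edges of $P$ have height $h$ and are long, so Lemma~\ref{lemma:same_height_KE} gives $X_P = \PP^2$ or $X_P = \PP^2/\ZZ_3$; but $\PP^2/\ZZ_3$ has edges of lattice length $3$ and height $1$, contradicting $\ell_E = h$, so $X_P = \PP^2$.

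The step I expect to be most delicate is the volume bookkeeping: establishing $\Vol(T) = 9h^2/d$ and $\Vol(T) = 3h\lambda$ with the correct lattice normalisations, and then extracting from the equality case $d = 3$ that $E$ exhausts the corresponding edge of $T$, whence $P = T$. The exclusion of $d = 2$ is a short index computation, and the remaining conclusions follow by quoting Lemmas~\ref{lemma:KE_triangle_weights} and~\ref{lemma:same_height_KE}.
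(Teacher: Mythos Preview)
Your proof is correct and takes a genuinely different route from the paper's. The paper immediately rescales the triangle $T$ (called $Q$ there) by a rational factor $t$ so that $tQ$ has primitive vertices and is therefore a \KE\ Fano triangle with equal-height long edges, then applies Lemma~\ref{lemma:same_height_KE} to $tQ$ to obtain $X_{tQ}=\PP^2$ or $X_{tQ}=\PP^2/\ZZ_3$; the value of $\det(u,uG)$ is then read off from the inner normals of $tQ$, since $E$ and the corresponding edge of $tQ$ share the same primitive normal. You instead compute $\Vol(T)$ in two ways to extract the edge length $\lambda=3h/d$, bound $d\le 3$ directly from the long-edge hypothesis, and eliminate $d=2$ by a clean index argument in $M$ (using that $G$ acts trivially on $M/L\isom\ZZ/2\ZZ$ while $3M\subseteq L$ forces $2\mid 9$); Lemma~\ref{lemma:same_height_KE} is invoked only at the very end to finish the $d=3$ case. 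The paper's route is shorter and leans on the classification lemma as an early black box; your route is more explicit, makes the role of the long-edge hypothesis (forcing $d\le 3$) completely transparent, and handles the exclusion of $d=2$ by a self-contained lattice argument rather than via the classification.
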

\begin{proof}
Denote by~$h_E$ the height of~$E$. Consider the (not necessarily lattice) triangle~$Q$ which is the intersection of the supporting half-spaces of~$E$,~$GE$, and~$G^2 E$, i.e.
\[
Q \coloneqq \setcond{\vec{x} \in \NQ}{u(\vec{x}) \ge -h_E, (uG)(\vec{x}) \ge -h_E, (uG^2)(\vec{x}) \ge -h_E}.
\]
Due to the action of~$G$ on~$Q$, there exists a unique~$t \in \QQ_{>0}$ such that~$tQ$ has primitive vertices. Since~$Q$ contains the origin, we can conclude that~$tQ$ is Fano. Now, consider the sum of the vertices of~$tQ$. This is fixed by~$G$, so it must equal the origin. Thus,~$tQ$ has weights~$(1,1,1)$.

Let~$F$ be an edge of~$tQ$. Then the other two edges of~$tQ$ are~$GF$ and~$G^2 F$. In particular, the edges of~$tQ$ all have the same height. Further, since the original edge~$E$ of~$P$ is long, the edges~$F$,~$GF$, and~$G^2 F$ must also be long. So, we may apply Lemma~\ref{lemma:same_height_KE} to~$tQ$. We obtain that either (i)~$X_{tQ} = \PP^2$ or (ii)~$X_{tQ} = \PP^2 / \ZZ_3$.

In case (i), the result follows from the observation that~$t=1$ and~$P = Q$. In case (ii), the result follows after noticing that~$E$ and~$F$ have the same primitive inner normal vector.
\end{proof}
The above lemma is enough to prove that~$3$-symmetric Fano polygons are minimal, which we do later in Lemma~\ref{lemma:P_minimal}. For the rest of this subsection, we again focus on~$3$-symmetric Fano polygons with~\emph{non-empty} baskets. The above lemma also constrains the dual polygon~$P^*$. As in~\S\ref{subsec:constrain_cs}, we next describe the mutation-invariants~$|kP^* \cap M|$ for relevant~$k$.
\begin{proposition}\label{prop:heights_fixed_3_sym}
Let~$P \subset \NQ$ be a~$3$-symmetric Fano polygon with element~$G \in \Aut(P)$ of order~$3$.
Suppose that~$P$ has a non-empty basket of singularities and that~$P$ has a long edge~$E$ with primitive inner normal~$u$ and height~$h$. Then
\[
kP^* \cap M = \begin{cases}
 \set{0}, & 0 \le k < h \\
 \set{0, u, uG, uG^2}, & k = h.
\end{cases}
\]
\end{proposition}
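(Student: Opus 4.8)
The plan is to mimic the structure of the proof of Proposition~\ref{prop:heights_fixed_cs}, but with the cyclic symmetry of order~$3$ replacing the central symmetry. First I would note that the inclusion~``$\supseteq$'' is immediate: the origin lies in every dilate~$kP^*$, and since~$E$ is a long edge with primitive inner normal~$u$ and height~$h$, we have~$u \in hP^*$ (because~$u(\vec{x}) \ge -h$ for all~$\vec{x} \in P$, with equality on~$E$); applying the automorphism~$G$, which permutes~$P$ and hence~$P^*$, the same holds for~$uG$ and~$uG^2$. So it remains to prove that~$hP^* \cap M \subseteq \set{0, u, uG, uG^2}$ and, for~$0 \le k < h$, that~$kP^* \cap M = \set{0}$; the latter follows from the former once we know that~$kP^*$ for~$k < h$ contains no nonzero lattice point (any such point would, after scaling, lie in~$hP^*$, but the primitive ones among~$u, uG, uG^2$ have "depth" exactly~$h$, not less).

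Next I would invoke Lemma~\ref{lemma:ord3_det}: since~$P$ has non-empty basket, case~(i) is excluded (there~$X_P = \PP^2$, which is reflexive with empty basket), so we are in case~(ii), i.e.~$\det(u, uG) = 1$. Hence~$u$ and~$uG$ form a~$\ZZ$-basis of~$M$, and I would apply an~$\SL_2(\ZZ)$-transformation so that~$u = (1,0)^t$ and~$uG = (0,1)^t$; then~$uG^2 = -u - uG = (-1,-1)^t$ (the three inner normals of the "triangular hull"~$Q$ from Lemma~\ref{lemma:ord3_det} sum to zero, being the vertex-sum-zero condition dualised, or directly because~$1 + G + G^2$ annihilates~$u$ for an order-$3$ rotation). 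The target set~$\set{0, u, uG, uG^2} = \conv\set{(1,0)^t, (0,1)^t, (-1,-1)^t} \cap M$ is exactly the "standard" reflexive triangle~$(\PP^2)^*$, with one interior point and three vertices.

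The heart of the argument is the same lattice-point elimination as in Proposition~\ref{prop:heights_fixed_cs}. Let~$E'$ be the edge of~$P$ with inner normal~$u$ at height~$h$ (namely~$E$ itself up to the chosen coordinates), $GE$ the edge with inner normal~$uG$, and~$G^2E$ the edge with inner normal~$uG^2$. I would first show that the lattice point~$(1,1)^t$ is not in~$hP^*$: this is equivalent to producing~$\vec{x} \in P$ with~$(u + uG)(\vec{x}) < -h$. The vertex of~$E$ that is shared with~$GE$ (or the relevant endpoint) has~$u$-value~$-h$; if it also had~$uG$-value~$\ge 0$ it would be a lattice point forcing~$h = 1$ by primitivity — excluded since non-empty basket together with Lemma~\ref{lemma:ord3_det}(ii) and the classification forces~$h \ge 2$ (here one would need the analogue of Lemma~\ref{lemma:height_bound}: that a~$3$-symmetric Fano polygon with non-empty basket and a long edge has~$h \ge 2$, which I expect to verify by the same small-case check, since~$h=1$ with the reflexive-triangle normals pins~$P$ down to a short list of reflexive, hence empty-basket, polygons). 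Granting~$h \ge 2$, that endpoint has strictly negative~$uG$-value, so its~$(u+uG)$-value is~$< -h$, ruling out~$(1,1)^t$. Then, exactly as in Figure~\ref{fig:ehr_cs_proof}, the half-open translated cones~$(1,1)^t + \cone(u, uG)$ and its two~$G$-rotates are entirely ruled out of~$hP^*$; by the~$G$-symmetry one elimination yields all three. What remains are the lattice points on the three "axes" — the rays through~$u$, $uG$, $uG^2$ — and here one argues that~$2u \notin hP^*$ (equivalently there is~$\vec{x} \in P$ with~$u(\vec{x}) < -h/2$, i.e.~$< -1$ since... — more carefully, one uses that the edges~$GE$, $G^2E$ are long to bound how far~$P$ extends against~$u$, giving~$h < $ twice the "opposite width", so that only~$0$ and~$u$ survive on that ray), and similarly on the other two rays. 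Combining, $hP^* \cap M \subseteq \set{0, u, uG, uG^2}$, completing the proof.

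The main obstacle I anticipate is the~$h \ge 2$ base case and, relatedly, the sharp bound ruling out~$2u$ (and its rotates) from~$hP^*$: in the centrally symmetric setting this used Lemma~\ref{lemma:height_bound}'s inequality~$h_2 < 2h_1$, and here the clean analogue is that \emph{all three} long edges have equal height~$h$ (by the~$G$-action the three "outer" edges are rotates of one another, though~$P$ may have additional shorter edges), so the extent of~$P$ in the~$-u$ direction is controlled by the length of~$G^2E$, which — being long — is at least~$h$, forcing any lattice point on the positive~$u$-ray in~$hP^*$ to be~$0$ or~$u$. Making the "$P$ may have other edges" bookkeeping precise, and confirming no such extra edge can push a point like~$2u$ into~$hP^*$, is the step that needs care; everything else is a faithful transcription of the centrally symmetric argument with~$D_2$ replaced by~$\ZZ_3$.
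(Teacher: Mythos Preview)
Your overall strategy matches the paper's: reduce to case~(ii) of Lemma~\ref{lemma:ord3_det}, normalise coordinates so that $\{u,uG,uG^2\}$ are the standard $\PP^2$ normals, cover $M$ by the three cones, and reduce everything to showing $(1,1)^t\notin hP^*$. That reduction is correct.

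There is, however, a genuine gap in your argument for $(1,1)^t\notin hP^*$. You take an endpoint $\vec v$ of $E$ with $u(\vec v)=-h$ and claim that $uG(\vec v)\ge 0$ would force $h=1$ by primitivity. Primitivity of $\vec v=(-h,b)$ (in your coordinates) only gives $\gcd(h,b)=1$; this forces $h=1$ when $b=0$, but says nothing when $b>0$. And $b>0$ is not a priori excluded: there may be short edges between $E$ and its $G$-rotate, so neither endpoint of $E$ need lie in the half-plane $uG<0$. The paper's argument repairs exactly this case, and it is here that the hypothesis ``$E$ is long'' does real work: take the left vertex $(a,-h)$ of $E$ (paper's coordinates) and, because $\ell_E\ge h$, the point $(a+h,-h)$ also lies on $E$; applying the order-$3$ automorphism to it yields $(-h,-a)\in P$. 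If $a>0$ then this image has $x+y=-h-a<-h$, while if $a<0$ the original vertex already works; the remaining case $a=0$ is the one where primitivity genuinely forces $h=1$ and hence reflexivity, contradicting $\Bc\ne\varnothing$. So the missing idea is to \emph{use the $G$-action on a point of $E$} (not just on the normals), together with longness of $E$, to manufacture the witness in the bad case.

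Separately, your worry about ruling out $2u$ is misplaced: this is immediate, not a delicate width estimate. Any point $\vec x$ on $E$ has $u(\vec x)=-h$, so $(2u)(\vec x)=-2h<-h$ and $2u\notin hP^*$; likewise for $muG$ and $muG^2$ with $m\ge 2$, using the rotated edges. The only nontrivial exclusion is the apex point $(1,1)^t$ and its rotates, exactly as you identified.
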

\begin{proof}
Since the basket of R-singularities is non-empty, we are in case (ii) of Lemma~\ref{lemma:ord3_det}. Thus, we may assume, after applying a suitable linear transformation to~$P$, that~$u = (0,1)^t$ and~$uG = (1,0)^t$. This determines~$G$, and so~$uG^2 = (-1,-1)^t$. Now, since the edges~$E$,~$GE$, and~$G^2 E$ all have height~$h$, we see that~$u, uG, uG^2 \not\in kP^*$, for~$0 \le k < h$, and~$\set{0,u,uG,uG^2} \subseteq hP^* \cap M$.

In order to show the reverse inclusion, it is enough to show that~$(1,1)^t \not\in hP^*$, as we will now explain. Consider the cone~$C$ in~$\MQ$ generated by~$(1,0)^t$ and~$(0,1)^t$. It is clear that, for~$m > 1$, the points~$(m,0)^t$ and~$(0,m)^t$ do not belong to~$hP^*$. Now let~$w \in (1,1)^t + C$. It is clear that~$(1,1)^t$ is contained in~$\conv\set{(1,0)^t, (0,1)^t, w}$. So, if~$w \in hP^*$, then~$(1,1)^t \in hP^*$. Therefore, if we can show that~$(1,1)^t \not\in hP^*$, then~$hP^* \cap C \cap M = \set{0,u,uG}$. We can treat~$CG$ and~$CG^2$ similarly, and reach the desired conclusion.

\begin{figure}[ht]
\centering
\begin{tikzpicture}[scale=0.7][line cap=round,line join=round,x=1cm,y=1cm]
 \begin{axis}[
 x=1cm,y=1cm,
 axis lines=middle,
 ymajorgrids=true,
 xmajorgrids=true,
 xmin=-3,
 xmax=3,
 ymin=-2,
 ymax=2,
 xtick={-2,...,2},
 ytick={-1,0,1}
 ]
 \fill[line width=2pt,color=red,fill=red,fill opacity=0.1] (0,1) -- (1,0) -- (-1,-1) -- cycle;
 
 \fill[line width=2pt,fill=gray,fill opacity=0.1] (-1,0) -- (-1,2) -- (-3,2) -- (-3,-2) -- cycle;
 \fill[line width=2pt,fill=gray,fill opacity=0.1] (1,1) -- (1,2) -- (3,2) -- (3,1) -- cycle;
 \fill[line width=2pt,fill=gray,fill opacity=0.1] (0,-1) -- (-1,-2) -- (3,-2) -- (3,-1) -- cycle;
 
 \draw [line width=2pt,color=red] (0,1)-- (1,0);
 \draw [line width=2pt,color=red] (1,0)-- (-1,-1);
 \draw [line width=2pt,color=red] (-1,-1)-- (0,1);
 
 \draw [line width=2pt] (-1,0)-- (-1,2);
 \draw [line width=2pt] (-3,-2)-- (-1,0);
 \draw [line width=2pt] (1,1)-- (1,2);
 \draw [line width=2pt] (3,1)-- (1,1);
 \draw [line width=2pt] (0,-1)-- (-1,-2);
 \draw [line width=2pt] (3,-1)-- (0,-1);
 \begin{scriptsize}
 \draw [fill=black] (1,1) circle (2.5pt);
 \draw [fill=black] (0,-1) circle (2.5pt);
 \draw [fill=black] (-1,0) circle (2.5pt);
 \end{scriptsize}
 \end{axis}
\end{tikzpicture}
\caption{In red: the polygon~$\conv\set{u,uG,uG^2}$.
In grey: the regions which can be ruled out from~$hP^*$ given that their apexes are not in~$hP^*$.}
\label{fig:ehr_3_proof}
\end{figure}
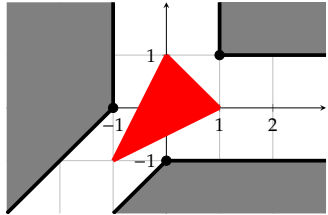

It remains to prove that~$(1,1)^t$ is not contained in~$hP^*$. This is equivalent to showing that there exists a point~$(x,y) \in P$ such that~$x+y < -h$. Let~$(a,-h)$ be a vertex of~$E$. Without loss of generality, and due to~$E$ being a long edge, we know that~$(a+h,-h) \in E$. So, we get the point~$G \cdot (a+h,-h) = (-h,-a)$ of~$P$. Then, we may set
\[
(x,y) \coloneqq \begin{cases}
 (a,-h), & a < 0 \\
 (-h,-a), & a > 0.
\end{cases}
\]
The case~$a=0$ cannot occur. Otherwise, since~$P$ is a Fano polytope, its vertex~$(a,-h)$ must be primitive. It follows that~$h=1$. Now,~$P$ has exactly one interior lattice point, i.e.\ it is reflexive. But this implies that~$P$ has an empty basket of R-singularities, which contradictions our starting assumption. Thus, we may conclude that indeed~$(1,1)^t$ does not belong to~$hP^*$.
\end{proof}
Let us now summarise the main implications of Proposition~\ref{prop:heights_fixed_3_sym}. Note that this is analogous to Corollary~\ref{cor:num_edges_cs}.
\begin{corollary}\label{cor:num_edges_3sym}
Let~$P$ be a~$3$-symmetric Fano polygon with non-empty basket. Suppose that~$P$ has at least one triple of long edges~$E$,~$GE$, and~$G^2 E$, where~$G \in \Aut(P)$ is an element of order~$3$. Then the long edges of~$P$ are exactly~$E$,~$GE$, and~$G^2 E$. Moreover, if~$P$ has an R-singularity~$H_{\sigma} \in \dirbasket_P$ with the same height as one of the long edges, then~$\sigma$ will be on a long edge of~$P$, i.e.\ $H_{\sigma} = \res(H_E)$. In particular, the number of edges is fixed.
\end{corollary}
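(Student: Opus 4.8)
The plan is to mirror the proof of Corollary~\ref{cor:num_edges_cs}, using Proposition~\ref{prop:heights_fixed_3_sym} in place of Proposition~\ref{prop:heights_fixed_cs}. Among the finitely many long edges of~$P$ (which include the given triple), choose one, say~$E$, of maximal height; write~$u$ for its primitive inner normal and~$h$ for its height. Since~$G$ is a unimodular automorphism of~$P$, it sends edges to edges preserving length and height, so~$GE$ and~$G^2E$ are again long edges of height~$h$; moreover~$E$,~$GE$,~$G^2E$ are pairwise distinct, since otherwise~$G$ would fix the barycentre of an edge, which is a nonzero point (the origin is interior to~$P$), contradicting that an order-$3$ element of~$\GL_2(\ZZ)$ fixes only the origin. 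Proposition~\ref{prop:heights_fixed_3_sym} now gives~$kP^*\cap M=\set{0}$ for~$0\le k<h$ and~$hP^*\cap M=\set{0,u,uG,uG^2}$.

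Next I would read off the edge structure exactly as in Corollary~\ref{cor:num_edges_cs}. The primitive inner normal of any edge~$E'$ of height~$h'$ lies in~$h'P^*\cap M$; hence~$P$ has no edge of height strictly below~$h$, and an edge of height exactly~$h$ has normal in~$\set{u,uG,uG^2}$, so it is one of~$E$,~$GE$,~$G^2E$ (distinct edges of a polygon have distinct primitive inner normals). Any long edge has height~$\le h$ by maximality and~$\ge h$ since there is no edge of smaller height, hence height exactly~$h$, hence lies in~$\set{E,GE,G^2E}$; as these three are long, the long edges of~$P$ are precisely~$E$,~$GE$,~$G^2E$.

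For the ``moreover'' clause, suppose~$H_\sigma\in\dirbasket_P$ is the residue of an edge~$E'$ and has the height of a long edge, namely~$h$. The residue of~$E'$ lies on the line supporting~$E'$, so~$E'$ has height~$h$ and therefore~$E'\in\set{E,GE,G^2E}$ is long; thus~$\sigma$ sits on a long edge. Since~$(\det G)^3=1$ forces~$G\in\SL_2(\ZZ)$ and Hermite normal forms of edges are~$\SL_2(\ZZ)$-invariant, $H_E=H_{GE}=H_{G^2E}$, so~$H_\sigma=\res(H_E)$. Finally, the edge count is pinned down: the three long edges give~$3$ edges, while every short edge coincides with its own nonempty residue, of height~$\ne h$ (edges of height~$h$ being long), and conversely every entry of~$\dirbasket_P$ of height~$\ne h$ is the residue of a short edge (residues of long edges have height~$h$); hence the number of short edges equals the number of entries of~$\dirbasket_P$ of height~$\ne h$. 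As~$\dirbasket_P$ and~$h$ are mutation-invariants ($h$ is the least~$k$ with~$\Ehr_{P^*}$ having a coefficient exceeding~$1$, by Proposition~\ref{prop:heights_fixed_3_sym}), the total number of edges is fixed across the mutation-equivalence class. I do not anticipate a real obstacle here: Proposition~\ref{prop:heights_fixed_3_sym} does the substantive work, and what remains is the bookkeeping above---distinctness of the three long edges, uniqueness of the edge with a given primitive inner normal, and that~$h$ is recoverable from mutation-invariant data.
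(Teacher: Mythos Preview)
Your proof is correct and follows essentially the same approach as the paper: choose a long edge of maximal height, invoke Proposition~\ref{prop:heights_fixed_3_sym} to force every edge of height at most~$h$ into the triple~$\{E,GE,G^2E\}$, and deduce that all remaining edges are short of strictly greater height. You supply more detail than the paper does---the distinctness of~$E,GE,G^2E$, the observation that~$G\in\SL_2(\ZZ)$ so~$H_E=H_{GE}=H_{G^2E}$, and an explicit count showing the number of edges is determined by~$\dirbasket_P$ and~$h$ (both mutation-invariant)---but the substance is the same.
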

\begin{proof}
Without loss of generality, take~$E$ to be a long edge of~$P$ achieving the maximal height~$h$ among all long edges of~$P$. By Proposition~\ref{prop:heights_fixed_3_sym}, the only edges of~$P$ which have height~$h' \le h$ are in~$\set{E, GE, G^2 E}$; otherwise, the primitive inner normal of such an edge would be a lattice point of~$h P^* \cap M$. Thus, we may conclude that~$E$,~$GE$, and~$G^2 E$ are the only long edges of~$P$ and that all other edges of~$P$ must have height strictly greater than~$h$. It now follows that if an R-singularity of~$P$ has height~$h$, then it must be on one of the long edges of~$P$.
\end{proof}
Finally, we give the following lemma specific to~$3$-symmetric polygons which will be of use in the following subsection.
\begin{lemma}\label{lemma:height_bound_3sym}
Let~$P \subset \NQ$ be a~$3$-symmetric polygon with non-empty basket~$\Bc$. Suppose~$E$ is an edge of~$P$ with height~$h$. Then~$h \ge 2$.
\end{lemma}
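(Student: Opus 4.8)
The plan is to mimic the arguments used in Lemma~\ref{lemma:det_one} and Lemma~\ref{lemma:height_bound}: reduce to a small, explicit list of candidate polygons of height~$1$ and check directly that each has empty basket. First I would reduce to the case where \emph{every} edge of~$P$ has height~$1$, i.e.\ where~$P$ is reflexive. Indeed, suppose for contradiction that~$P$ has some edge~$E$ of height~$h=1$. The edge~$E$ is long (since its length is a positive integer, hence~$\ell_E\ge 1=h$), and so by Corollary~\ref{cor:num_edges_3sym} the only long edges of~$P$ are~$E$,~$GE$, and~$G^2E$, each of height~$1$, while all other edges of~$P$ have height strictly greater than~$1$. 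But this already contradicts Proposition~\ref{prop:heights_fixed_3_sym} in a subtle way, so instead I would argue more directly as follows.

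Suppose~$E$ has height~$1$; it is long, so we may apply Lemma~\ref{lemma:ord3_det}. In case~(i) we have~$X_P=\PP^2$, which is reflexive and hence has empty basket~$\Bc=\varnothing$, contradicting our hypothesis. So we are in case~(ii): $\det(u,uG)=1$. As in the proof of Proposition~\ref{prop:heights_fixed_3_sym}, we may apply a unimodular transformation so that~$u=(0,1)^t$ and~$uG=(1,0)^t$, forcing~$uG^2=(-1,-1)^t$. Then~$P$ is sandwiched between the three lines~$y=-1$, $x=-1$, and~$x+y=1$ (the supporting lines of~$E$, $GE$, $G^2E$ at height~$1$), which cut out the triangle~$Q=\conv\set{(-1,-1),(2,-1),(-1,2)}$, the polygon of~$\PP^2/\ZZ_3$. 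Any~$3$-symmetric Fano polygon contained in~$Q$ with~$(-1,-1)$, its~$G$-orbit, on its boundary is then one of a very short list: either~$P=Q$ itself, or~$P$ is obtained from~$Q$ by cutting the three corners~$G$-equivariantly. Enumerating these finitely many possibilities and computing the edge heights of each, one checks that every such~$P$ is in fact reflexive, hence has~$\Bc=\varnothing$, the desired contradiction.

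Once reflexivity of~$P$ is the only remaining case, we are done immediately: a reflexive polygon has empty basket (as noted in the Remark following the definition of reflexive polygons), contradicting the hypothesis~$\Bc\neq\varnothing$. Therefore no edge of~$P$ can have height~$1$, i.e.\ every edge has height~$h\ge 2$. The main obstacle I anticipate is the bookkeeping in the middle step: verifying that the (finitely many)~$3$-symmetric Fano polygons inscribed in~$\conv\set{(-1,-1),(2,-1),(-1,2)}$ are all reflexive. This is a finite but slightly fiddly case check — one must be careful that corner truncations keep all vertices primitive and keep the origin in the strict interior — but it is entirely routine, exactly parallel to the box-enumeration in the proof of Lemma~\ref{lemma:height_bound}.
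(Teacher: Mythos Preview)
Your approach is essentially the same as the paper's: assume some edge has height~$1$, use Lemma~\ref{lemma:ord3_det} to trap~$P$ inside the reflexive triangle~$Q=\conv\set{(-1,-1),(2,-1),(-1,2)}$, and conclude that~$P$ itself is reflexive, contradicting~$\Bc\neq\varnothing$. The only substantive difference is your final step. You propose to enumerate the finitely many~$3$-symmetric Fano polygons inscribed in~$Q$ and verify reflexivity case by case. The paper bypasses this entirely with a one-line observation: since~$Q$ is reflexive, its unique interior lattice point is the origin; hence any Fano polygon~$P\subseteq Q$ also has the origin as its unique interior lattice point, and is therefore reflexive. No~$3$-symmetry, no enumeration, no primitivity bookkeeping needed. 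Your ``fiddly case check'' is correct but avoidable; the containment~$P\subseteq Q$ already does all the work.

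A small wording issue: you write ``with~$(-1,-1)$, its~$G$-orbit, on its boundary,'' but the corners of~$Q$ need not lie on~$\boundary P$ at all --- only the edges~$E,GE,G^2E$ lie on the supporting lines of~$Q$. This does not affect correctness (the enumeration is still finite), but it is imprecise as stated.
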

\begin{proof}
Suppose towards a contradiction that~$h = 1$. Then~$E$ is a long edge. Since~$\Bc \neq \varnothing$, case (ii) of Lemma~\ref{lemma:ord3_det} applies. Therefore,~$P$ is contained in a reflexive triangle~$Q$ with~$X_Q = \PP^2 / \ZZ_3$. In particular,~$P$ itself is reflexive. This is a contradiction, since it implies that~$\Bc = \varnothing$.
\end{proof}
\subsection{The proof for symmetric Fano polygons}\label{subsec:proof_sym}
In this subsection, the aim is to prove Theorem~\ref{thm:sym_unique}. We first prove it for the case when the basket of R-singularities is empty. Then, we prove the non-empty basket case.

So, we begin by proving that all symmetric Fano polygons are minimal.
\begin{lemma}\label{lemma:P_minimal}
Let~$P \subset \NQ$ be a symmetric Fano polygon. Then~$P$ is minimal.
\end{lemma}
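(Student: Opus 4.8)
The plan is to use the characterisation of minimality from Lemma~\ref{lemma:min_poly_condition}: a Fano polygon $P$ is minimal if and only if, for every long edge $E$ of $P$ with primitive inner normal $u_E$, the quantity $|\hmin|$ does not exceed $\hmax$ (where $\hmin$ and $\hmax$ are taken with respect to $w = u_E$). So I would fix an arbitrary long edge $E$ of $P$ and show $|\hmin| \le \hmax$. Since $E$ lies at height $\hmin = -h_E$, this amounts to proving that $P$ extends at least as far in the $+u_E$ direction as it does in the $-u_E$ direction, i.e.\ $\hmax \ge h_E$.

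The key idea is to exploit the symmetry. By Proposition~\ref{prop:heights_fixed_3_sym}'s surrounding results, $P$ is either centrally symmetric or $3$-symmetric. If $P$ is centrally symmetric, then $-E$ is also an edge of $P$, and $-E$ lies at height $h_E$ with respect to $u_E$; hence $\hmax \ge h_E = |\hmin|$, and minimality follows immediately. If $P$ is $3$-symmetric with $G \in \Aut(P)$ of order $3$, then I would invoke Lemma~\ref{lemma:ord3_det}: either $X_P = \PP^2$, in which case $P$ is reflexive and hence minimal by the remark following the definition of reflexive polygons, or we are in case (ii) where $\det(u_E, u_E G) = 1$. In the latter case, after a unimodular change of coordinates I may take $u_E = (0,1)^t$, $u_E G = (1,0)^t$, and $u_E G^2 = (-1,-1)^t$, with the three long edges $E$, $GE$, $G^2E$ all at height $h_E$. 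Then $G^2 E$ sits at the minimum of $u_E G$, which is $-h_E$; applying $G$ to a vertex of $G^2 E$ lands a point of $P$ at $u_E$-height equal to $h_E$ (since $G$ permutes the supporting hyperplanes cyclically and the relevant vertex of $G^2 E$ has a coordinate forcing this), so $\hmax \ge h_E$. This handles all long edges because every long edge of a $3$-symmetric polygon, in case (ii), is forced to have height $h_E$ and be one of the three symmetric edges — or one argues directly for an arbitrary long edge by the same symmetry trick.

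Concretely, for the $3$-symmetric case (ii) I would argue as in the proof of Proposition~\ref{prop:heights_fixed_3_sym}: let $(a, -h_E)$ be a vertex of $E$ in the chosen coordinates; since $E$ is long, the point $(a + h_E, -h_E)$ also lies in $E \subseteq P$, and applying $G$ gives $G \cdot (a + h_E, -h_E) = (-h_E, -a) \in P$. Its $u_E$-height is $-a$. Also applying $G^2$, or just noting $(a,-h_E)$ itself has $u_E G$-height $-h_E$ hence $(a,-h_E)$ pushed by $G^{-1}$ gives a point of positive $u_E$-height, one checks that in every case there is a point of $P$ at $u_E$-height $\ge h_E$ (the cases $a < 0$, $a > 0$, $a = 0$ being treated separately, with $a = 0$ forcing $h_E = 1$ and reflexivity). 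Therefore $\hmax \ge h_E = |\hmin|$, so the minimality criterion is met for $E$. Since $E$ was an arbitrary long edge, $P$ is minimal.

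I expect the main obstacle to be the bookkeeping in the $3$-symmetric case (ii): one must verify carefully that applying powers of $G$ to the appropriate vertex or lattice point of the long edge $E$ really does produce a point of $P$ whose height with respect to $u_E$ is at least $h_E$, handling the sign of the offset $a$ correctly and ruling out the degenerate $a = 0$ configuration via primitivity of vertices and the non-emptiness issue. (If $P$ has empty basket this degenerate case is exactly when $P$ is reflexive, and reflexive polygons are minimal, so that branch is harmless; if $P$ has non-empty basket, Lemma~\ref{lemma:height_bound_3sym} rules out $h_E = 1$ outright.) The centrally symmetric case and the $X_P = \PP^2$ subcase are immediate, so essentially all the work is this one geometric estimate, which is a mild variant of computations already carried out earlier in the section.
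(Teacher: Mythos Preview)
Your proposal is correct and takes essentially the same approach as the paper: both dispose of the centrally symmetric case immediately, invoke Lemma~\ref{lemma:ord3_det} for the $3$-symmetric case, and in case~(ii) normalise coordinates and apply powers of~$G$ to lattice points of the long edge~$E$ to exhibit a point of~$P$ at $u_E$-height at least~$h_E$. The paper's execution is marginally slicker---it computes the $u_E$-heights of~$G\cdot(a,-h)$ and~$G^2\cdot(a+h,-h)$ as~$h-a$ and~$h+a$ respectively, and since~$\max(h-a,h+a)\ge h$ always, no separate case split on the sign of~$a$ (or the degenerate~$a=0$ branch) is needed.
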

\begin{proof}
If~$P$ is centrally symmetric, then minimality of~$P$ is clear. It remains to treat the case when~$P$ is~$3$-symmetric. If~$P$ has no long edges, then we are done. Otherwise, let~$E$ be a long edge of~$P$ with length~$\ell$ and height~$h$. In order to demonstrate minimality of~$P$, we need to prove that there is a point~$\vec{x}$ of~$P$ satisfying~$u(\vec{x}) \ge h$.

We can apply Lemma~\ref{lemma:ord3_det}. In case (i), it's straightforward to see that~$P$ is minimal. We now consider case (ii). Without loss of generality,~$u = (0,1)^t$ is the inner normal to~$E$ and~$(1,0)^t$ is the inner normal to~$GE$, where~$G \in \Aut(P)$ is an element of order~$3$.
Thus,~$G = \left(\begin{smallmatrix}
 -1 & -1 \\
 1 & 0
\end{smallmatrix}\right)$. We may write~$E = \conv\set{(a,-h), (a+\ell,-h)}$ for some integer~$a$. Since~$E$ is long, the point~$(a+h,-h)$ is in~$E$.

Consider the points~$G \cdot (a,-h) = (-h,h-a)$ and~$G^2 \cdot (a+h,-h) = (-a,h+a)$, which belong to~$P$. If we evaluate them at~$u$, we obtain~$h-a$ and~$h+a$, respectively. At least one of these is greater than or equal to~$h$. Thus, we are done.
\end{proof}
Previous work of~\cite{minimality} already contains the answer for symmetric Fano polygons whose baskets of R-singularities are empty.
\begin{proposition}\label{prop:empty_basket}
Let~$P, Q \subset \NQ$ be symmetric Fano polygons with empty basket~$\Bc$. If~$P$ and~$Q$ are mutation-equivalent, then~$P$ is isomorphic to~$Q$.
\end{proposition}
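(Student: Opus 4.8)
The plan is to deduce this from the classification of minimal Fano polygons with empty basket in~\cite{minimality}, using that symmetric polygons are minimal.

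First I would record the mutation-invariants that survive the empty-basket hypothesis. Since~$\Bc_P = \Bc_Q = \varnothing$, the directed baskets are empty and the directed singularity contents reduce to~$\SC(P) = (n_P, \varnothing)$ and~$\SC(Q) = (n_Q, \varnothing)$; as~$P$ and~$Q$ are mutation-equivalent and the directed singularity content is a mutation-invariant, we get~$n_P = n_Q$. By Lemma~\ref{lemma:P_minimal}, both~$P$ and~$Q$ are minimal. Hence~$P$ and~$Q$ are minimal Fano polygons with empty basket lying in one and the same mutation-equivalence class.

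Next I would appeal to~\cite{minimality}: the minimal Fano polygons with empty basket are classified there, and each mutation-equivalence class of Fano polygons with empty basket contains exactly one such polygon up to isomorphism --- these are precisely the toric models of the ten deformation families of smooth del~Pezzo surfaces, with~$K_{X_P}^2 = 12 - n_P$ and the two degree-eight families separated within the common value~$n_P = 4$. Since~$P$ and~$Q$ are both minimal and mutation-equivalent, they coincide with this unique minimal representative, so~$P \isom Q$.

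If only finiteness (rather than uniqueness) of the minimal representative is taken from~\cite{minimality}, the proof is completed by a short enumeration: a symmetric Fano polygon is centrally symmetric or~$3$-symmetric, and an empty basket forces every edge~$E$ to satisfy~$\ell_E = n_E h_E$ with~$n_E \ge 1$, which together with the minimality criterion of Lemma~\ref{lemma:min_poly_condition} bounds the heights and leaves only finitely many such polygons; one then checks directly that no two non-isomorphic ones are mutation-equivalent, e.g.\ by comparing~$n_P$ together with~$\Ehr_{P^*}(t)$. The only real work, in either route, is this bookkeeping; there is no conceptual obstacle once minimality of~$P$ and~$Q$ is in place.
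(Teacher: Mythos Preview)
Your proposal is correct and your second route is exactly what the paper does. The paper invokes Lemma~\ref{lemma:P_minimal} to place~$P$ and~$Q$ in the finite list of minimal Fano polygons with~$\Bc=\varnothing$ from~\cite[Theorem~5.4]{minimality}, then simply extracts the six symmetric ones and observes that their values of~$n_P$ are pairwise distinct (namely~$n_P\in\{3,4,6,8,9,10\}$), so no two can be mutation-equivalent. In particular~$n_P$ alone separates them; you do not need~$\Ehr_{P^*}(t)$.

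One caution on your first route: the blanket assertion that each empty-basket mutation class contains \emph{exactly one} minimal polygon is not something you should take for granted, since the paper itself stresses that minimality is a local property and a class can in general contain several minimal polygons. For~$\Bc=\varnothing$ this uniqueness does in fact come out of the classification in~\cite{minimality}, but the paper deliberately avoids relying on it and instead argues via the explicit list of symmetric polygons and the invariant~$n_P$, which is both shorter and self-contained.
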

\begin{proof}
In~\cite[Theorem 5.4]{minimality}, all minimal Fano polygons with~$\Bc = \varnothing$ are classified up to isomorphism. Since symmetric Fano polygons are minimal by Lemma~\ref{lemma:P_minimal}, the ones with empty basket all appear in their list.

We display all~$6$ symmetric Fano polygons with empty baskets in Figure~\ref{fig:empty_basket_syms}. They each have a different number of primitive T-singularities. So, none are mutation-equivalent to each other. Therefore, no two non-isomorphic symmetric Fano polygons are mutation-equivalent to each other.

\begin{figure}[h]
\centering
\begin{subfigure}{.2\textwidth}
 \begin{tikzpicture}[scale=0.7][line cap=round,line join=round,x=1cm,y=1cm]
 \begin{axis}[
 x=1cm,y=1cm,
 axis lines=middle,
 ymajorgrids=true,
 xmajorgrids=true,
 xmin=-2,
 xmax=2,
 ymin=-2,
 ymax=2
 ]
 
 \fill[line width=2pt,color=blue,fill=blue,fill opacity=0.05] (0,1) -- (1,0) -- (-1,-1) -- cycle;
 
 \draw [line width=2pt,color=black] (0,1)-- (1,0);
 \draw [line width=2pt,color=black] (1,0)-- (-1,-1);
 \draw [line width=2pt,color=black] (-1,-1)-- (0,1);
 
 \begin{scriptsize}
 \draw [fill=black] (0,1) circle (2.5pt);
 \draw [fill=black] (1,0) circle (2.5pt);
 \draw [fill=black] (-1,-1) circle (2.5pt);
 \end{scriptsize}
 \end{axis}
 \end{tikzpicture}
 \subcaption{$n = 3$.}
\end{subfigure}
\begin{subfigure}{.2\textwidth}
 \begin{tikzpicture}[scale=0.7][line cap=round,line join=round,x=1cm,y=1cm]
 \begin{axis}[
 x=1cm,y=1cm,
 axis lines=middle,
 ymajorgrids=true,
 xmajorgrids=true,
 xmin=-2,
 xmax=2,
 ymin=-2,
 ymax=2
 ]
 
 \fill[line width=2pt,color=blue,fill=blue,fill opacity=0.05] (0,1) -- (1,0) -- (0,-1) -- (-1,0) -- cycle;
 
 \draw [line width=2pt,color=black] (0,1)-- (1,0);
 \draw [line width=2pt,color=black] (1,0)-- (0,-1);
 \draw [line width=2pt,color=black] (0,-1)-- (-1,0);
 \draw [line width=2pt,color=black] (-1,0)-- (0,1);
 
 \begin{scriptsize}
 \draw [fill=black] (0,1) circle (2.5pt);
 \draw [fill=black] (1,0) circle (2.5pt);
 \draw [fill=black] (0,-1) circle (2.5pt);
 \draw [fill=black] (-1,0) circle (2.5pt);
 \end{scriptsize}
 \end{axis}
 \end{tikzpicture}
 \subcaption{$n = 4$.}
\end{subfigure}
\begin{subfigure}{.2\textwidth}
 \begin{tikzpicture}[scale=0.7][line cap=round,line join=round,x=1cm,y=1cm]
 \begin{axis}[
 x=1cm,y=1cm,
 axis lines=middle,
 ymajorgrids=true,
 xmajorgrids=true,
 xmin=-2,
 xmax=2,
 ymin=-2,
 ymax=2,
 ]
 
 \fill[line width=2pt,color=blue,fill=blue,fill opacity=0.05] (0,1) -- (1,1) -- (1,0) -- (0,-1) -- (-1,-1) -- (-1,0) -- cycle;
 
 \draw [line width=2pt,color=black] (0,1) -- (1,1);
 \draw [line width=2pt,color=black] (1,1) -- (1,0);
 \draw [line width=2pt,color=black] (1,0) -- (0,-1);
 \draw [line width=2pt,color=black] (0,-1) -- (-1,-1);
 \draw [line width=2pt,color=black] (-1,-1) -- (-1,0);
 \draw [line width=2pt,color=black] (-1,0) -- (0,1);
 
 \begin{scriptsize}
 \draw [fill=black] (0,1) circle (2.5pt);
 \draw [fill=black] (1,1) circle (2.5pt);
 \draw [fill=black] (1,0) circle (2.5pt);
 \draw [fill=black] (0,-1) circle (2.5pt);
 \draw [fill=black] (-1,-1) circle (2.5pt);
 \draw [fill=black] (-1,0) circle (2.5pt);
 \end{scriptsize}
 \end{axis}
 \end{tikzpicture}
 \subcaption{$n = 6$.}
\end{subfigure}
\begin{subfigure}{.2\textwidth}
 \begin{tikzpicture}[scale=0.7][line cap=round,line join=round,x=1cm,y=1cm]
 \begin{axis}[
 x=1cm,y=1cm,
 axis lines=middle,
 ymajorgrids=true,
 xmajorgrids=true,
 xmin=-2,
 xmax=2,
 ymin=-2,
 ymax=2,
 ]
 
 \fill[line width=2pt,color=blue,fill=blue,fill opacity=0.05] (-1,-1) -- (-1,1) -- (1,1) -- (1,-1) -- cycle;
 
 \draw [line width=2pt,color=black] (-1,-1) -- (-1,1);
 \draw [line width=2pt,color=black] (-1,1) -- (1,1);
 \draw [line width=2pt,color=black] (1,1) -- (1,-1);
 \draw [line width=2pt,color=black] (1,-1) -- (-1,-1);
 
 \begin{scriptsize}
 \draw [fill=black] (-1,-1) circle (2.5pt);
 \draw [fill=black] (-1,1) circle (2.5pt);
 \draw [fill=black] (1,1) circle (2.5pt);
 \draw [fill=black] (1,-1) circle (2.5pt);
 \end{scriptsize}
 \end{axis}
 \end{tikzpicture}
 \subcaption{$n = 8$.}
\end{subfigure}
\begin{subfigure}{.3\textwidth}
 \begin{tikzpicture}[scale=0.65][line cap=round,line join=round,x=1cm,y=1cm]
 \begin{axis}[
 x=1cm,y=1cm,
 axis lines=middle,
 ymajorgrids=true,
 xmajorgrids=true,
 xmin=-2,
 xmax=3,
 ymin=-2,
 ymax=3,
 ]
 
 \fill[line width=2pt,color=blue,fill=blue,fill opacity=0.05] (-1,-1) -- (2,-1) -- (-1,2) -- cycle;
 
 \draw [line width=2pt,color=black] (-1,-1) -- (2,-1);
 \draw [line width=2pt,color=black] (2,-1) -- (-1,2);
 \draw [line width=2pt,color=black] (-1,2) -- (-1,-1);
 
 \begin{scriptsize}
 \draw [fill=black] (-1,-1) circle (2.5pt);
 \draw [fill=black] (2,-1) circle (2.5pt);
 \draw [fill=black] (-1,2) circle (2.5pt);
 \end{scriptsize}
 \end{axis}
 \end{tikzpicture}
 \subcaption{$n = 9$.}
\end{subfigure}
\begin{subfigure}{.3\textwidth}
 \begin{tikzpicture}[scale=0.7][line cap=round,line join=round,x=1cm,y=1cm]
 \begin{axis}[
 x=1cm,y=1cm,
 axis lines=middle,
 ymajorgrids=true,
 xmajorgrids=true,
 xmin=-3,
 xmax=3,
 ymin=-2,
 ymax=2,
 xtick={-3,...,3},
 ytick={-2,...,2}
 ]
 
 \fill[line width=2pt,color=blue,fill=blue,fill opacity=0.05] (-2,-1) -- (-2,1) -- (2,1) -- (2,-1) -- cycle;
 
 \draw [line width=2pt,color=black] (-2,-1) -- (-2,1);
 \draw [line width=2pt,color=black] (-2,1) -- (2,1);
 \draw [line width=2pt,color=black] (2,1) -- (2,-1);
 \draw [line width=2pt,color=black] (2,-1) -- (-2,-1);
 
 \begin{scriptsize}
 \draw [fill=black] (-2,-1) circle (2.5pt);
 \draw [fill=black] (-2,1) circle (2.5pt);
 \draw [fill=black] (2,1) circle (2.5pt);
 \draw [fill=black] (2,-1) circle (2.5pt);
 \end{scriptsize}
 \end{axis}
 \end{tikzpicture}
 \subcaption{$n = 10$.}
\end{subfigure}
\caption{The six symmetric Fano polygons with~$n$ primitive T-singularities and empty basket~$\Bc = \varnothing$.}
\label{fig:empty_basket_syms}
\end{figure}
\end{proof}
So, we can restrict our attention to symmetric Fano polygons which have a non-empty basket~$\Bc$ of R-singularities. We prove the remainder of Theorem~\ref{thm:sym_unique} in two big steps. In the first step, we prove that two symmetric Fano polygons with the same edge data are isomorphic (Proposition~\ref{prop:one_cs_per_extended_basket}). In the second step, we prove that if two symmetric Fano polygons are mutation-equivalent, then their edge data is the same (Proposition~\ref{prop:ESC_same}). Thus, Theorem~\ref{thm:sym_unique} would then follow.

In order to complete the first step, we need the following two lemmas.
\begin{lemma}\label{lemma:sym_det_fixed}
Let~$H = \left(\begin{smallmatrix}
 1 & a \\
 0 & r
\end{smallmatrix}\right)$ be a Hermite normal form satisfying~$\gcd(a,r) = 1$. Fix an edge~$E$ with primitive vertices~$\vec{v}_0$ and~$\vec{v}_1$ in~$N$, which are ordered anticlockwise. Let~$u_E \in M$ be the primitive inner normal of~$E$ and let~$h_E$ be the height of~$E$. Let~$\Fc$ be the set of all edges~$F$ which satisfy the following conditions:
\begin{itemize}
\item[(i)] $F$ has Hermite normal form~$H$;
\item[(ii)] The first vertex of~$F$ is the second vertex~$\vec{v}_1$ of~$E$;
\item[(iii)] The second vertex~$\vec{v}_2$ of~$F$ satisfies~$-h < u(\vec{v}_2) \le (r-1)h$.
\end{itemize}
Then,~$|\Fc| \le 1$.
\end{lemma}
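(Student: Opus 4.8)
The plan is to convert conditions (i)--(iii) into a one‑dimensional counting problem. Conditions (i) and (ii) will pin the second vertex $\vec{v}_2$ of $F$ to a fixed line parallel to $\vec{v}_1$, with the admissible points spaced in steps of $r\vec{v}_1$; condition (iii) then carves out a window of exactly the matching width, which can contain at most one of them. Concretely, fix once and for all a matrix $G_0 \in \SL_2(\ZZ)$ with $G_0\vec{v}_1 = (1,0)^t$ (possible since $\vec{v}_1$ is primitive). Given $F \in \Fc$ with second vertex $\vec{v}_2$, the cone over $F$ has primitive ray generators $\vec{v}_1, \vec{v}_2$ in anticlockwise order, so the generator matrix $A \coloneqq [\vec{v}_1 \mid \vec{v}_2]$ has positive determinant equal to $r = \det H$; hence the unimodular matrix $G_F$ with $G_F A = H$ in fact has $\det G_F = 1$, i.e.\ $G_F \in \SL_2(\ZZ)$ with $G_F\vec{v}_1 = (1,0)^t$ and $G_F\vec{v}_2 = (a,r)^t$. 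Since the stabiliser of $(1,0)^t$ in $\SL_2(\ZZ)$ is $\setcond{\left(\begin{smallmatrix}1 & m \\ 0 & 1\end{smallmatrix}\right)}{m \in \ZZ}$, we get $G_F = \left(\begin{smallmatrix}1 & m \\ 0 & 1\end{smallmatrix}\right)G_0$ for some $m \in \ZZ$, and inverting yields
\[
\vec{v}_2 = G_0^{-1}(a - mr,\, r)^t = \vec{p} - mr\,\vec{v}_1, \qquad \text{where } \vec{p} \coloneqq G_0^{-1}(a,r)^t
\]
depends only on $E$, $H$, and $G_0$, not on $F$.

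For the counting step, note that $\vec{v}_1$ is a vertex of $E$ and $u_E$ is the primitive inner normal to $E$, so $u_E(\vec{v}_1) = -h_E$. Therefore
\[
u_E(\vec{v}_2) = u_E(\vec{p}) - mr\, u_E(\vec{v}_1) = u_E(\vec{p}) + mr\, h_E,
\]
so as $m$ runs over $\ZZ$ the quantity $u_E(\vec{v}_2)$ runs over an arithmetic progression of common difference $r h_E$. Condition (iii) demands $u_E(\vec{v}_2) \in (-h_E,\ (r-1)h_E]$, a half-open interval whose length is $(r-1)h_E - (-h_E) = r h_E$. An arithmetic progression of common difference $d$ meets any half-open interval of length $d$ in exactly one point, so at most one value of $m$ is admissible; this determines $\vec{v}_2$, and since $F = \conv\set{\vec{v}_1, \vec{v}_2}$ with $\vec{v}_1$ fixed, it follows that $|\Fc| \le 1$.

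I do not expect a genuine obstacle here; the only points needing care are two orientation/bookkeeping matters: first, that the unimodular matrix realising the Hermite normal form of the cone over $F$ can be taken in $\SL_2(\ZZ)$ rather than merely in $\GL_2(\ZZ)$ --- which holds because both $A = [\vec{v}_1\mid\vec{v}_2]$ and $H$ have positive determinant --- and second, the sign convention $u_E(\vec{v}_1) = -h_E$ for the inner normal. Once these are settled, the lemma reduces to the elementary fact that an arithmetic progression meets an interval of matching length exactly once.
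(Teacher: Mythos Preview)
Your proof is correct and follows essentially the same approach as the paper's: both show that the admissible second vertices $\vec{v}_2$ form a single orbit under translation by $r\vec{v}_1$, and then use condition~(iii) to cut this down to a window of matching width. The only cosmetic difference is that the paper normalises coordinates so that $\vec{v}_1 = (1,0)$ and then argues geometrically via a strip (counting lattice points at fixed $y$-coordinate), whereas you work coordinate-free and evaluate $u_E$ directly to reduce to an arithmetic progression in a half-open interval of equal common difference; the underlying idea is identical.
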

\begin{proof}
Since the vertices are primitive, we may assume without loss of generality that~$\vec{v}_0 = (b,-s)$ and~$\vec{v}_1 = (1,0)$, for some coprime integers~$b,s > 0$. If~$\Fc = \varnothing$, then we are done. Otherwise, let~$F, F' \in \Fc$ so that, by (ii),~$F$ has vertices~$\vec{v}_1$ and~$\vec{v}_2$ and~$F'$ has vertices~$\vec{v}_2'$, ordered anticlockwise. We aim to show that~$F = F'$.

Write~$\vec{v}_2 = (x,y)$ and~$\vec{v}_2' = (x',y')$. By (i), there exists some~$U, U' \in \SL_2(\ZZ)$ such that~$F = UH$ and~$F' = U'H$. It follows that~$y = y' = r$ and~$x \congr x' \congr a \mod r$. Thus,~$x' = x + dr$ for some~$d \in \ZZ$.

It remains to show that~$d=0$. To do this, we apply condition (iii). An important observation is that the half-open strip~$S \coloneqq \setcond{\vec{x} \in \NQ}{-h < u_E(\vec{x}) \le (r-1)h}$ is the Minkowski sum of the half-open segment~$\setcond{(x,0)}{1-r \le x < 1}$ and a line with non-horizontal slope. From this, we can see that at each fixed~$y$-coordinate, there are at most~$r$ lattice points in~$S$. The vertices~$\vec{v}_2$ and~$\vec{v}_2'$ both lie in~$S$ and share the same~$y$-coordinate; consequently,~$|x-x'| < r$, i.e.\ $|dr| < r$. Thus,~$d=0$. Therefore,~$\vec{v}_2 = \vec{v}_2'$, and so~$F = F'$. It now follows that~$|\Fc| \le 1$.

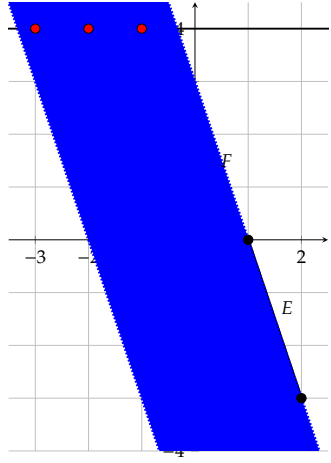
\begin{figure}[ht]
\centering
\begin{tikzpicture}[scale=0.7][line cap=round,line join=round,>=triangle 45,x=1cm,y=1cm]
 \begin{axis}[
 x=1cm,y=1cm,
 axis lines=middle,
 ymajorgrids=true,
 xmajorgrids=true,
 xmin=-3.5,
 xmax=2.5,
 ymin=-4,
 ymax=4.5,
 xtick={-3,...,2},
 ytick={-4,-3,...,4}]

 \draw [line width=1.5pt] (2,-3) -- (1,0);
 \draw [line width=1.5pt] (1,0) -- (-1,4);
 \draw [line width=1pt,domain=-4:5] plot(\x,4);
 
 \draw[line width=1.5pt,dash pattern=on 1pt off 1pt,color=blue,fill=blue,fill opacity=0.1]
 (-4,6) -- (0,-6) -- (3,-6) -- (-1,6);
 
 \begin{scriptsize}
 \draw [fill=black] (2,-3) circle (2.5pt);
 \draw [fill=black] (1,0) circle (2.5pt);
 \draw[color=black] (1.74,-1.28) node {$E$};
 \draw[color=black] (0.6,1.5) node {$F$};
 
 \draw [fill=red] (-3,4) circle (2.5pt);
 \draw [fill=red] (-2,4) circle (2.5pt);
 \draw [fill=red] (-1,4) circle (2.5pt);
 \end{scriptsize}
 \end{axis}
\end{tikzpicture}
\caption{An example to demonstrate Lemma~\ref{lemma:sym_det_fixed}. Here,~$E = \conv\set{(2,-3), (1,0)}$ and~$H = \left(\begin{smallmatrix}
 1 & 3 \\
 0 & 4
\end{smallmatrix}\right)$. The red points are the three choices for the second vertex~$\vec{v}_2$ of~$F$; the only valid choice for~$\vec{v}_2$ is~$(-1,4)$.}
\label{fig:one_vertex_choice}
\end{figure}
\end{proof}
This next lemma generalises a behaviour shared by centrally symmetric polygons and~$3$-symmetric polygons.
\begin{lemma}\label{lemma:sym_height_bound}
Let~$P \subset \NQ$ be a symmetric polygon and~$E$ be an edge of~$P$. Then,~$P$ is contained in the strip supported by~$E$ and~$-2E$.
\end{lemma}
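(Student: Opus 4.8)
The plan is to reduce to the two cases supplied by the earlier classification of symmetric polygons (centrally symmetric or $3$-symmetric), and in each case to exhibit the extra edge(s) of $P$ whose supporting half-spaces, together with the one cut out by $E$, trap $P$ inside the required strip. To set up, let $u \in M$ be the primitive inner normal of $E$ and $h$ its height, so $E \subseteq \setcond{\vec{x} \in \NQ}{u(\vec{x}) = -h}$ and $P \subseteq \setcond{\vec{x} \in \NQ}{u(\vec{x}) \ge -h}$. A point of $-2E$ is $-2\vec{x}$ for some $\vec{x} \in E$, so $-2E$ lies on $\setcond{\vec{x} \in \NQ}{u(\vec{x}) = 2h}$; hence the strip supported by $E$ and $-2E$ is exactly $\setcond{\vec{x} \in \NQ}{-h \le u(\vec{x}) \le 2h}$. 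Since the lower bound is automatic, the whole content is to show that $u(\vec{x}) \le 2h$ for every $\vec{x} \in P$.

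The centrally symmetric case I would dispose of first: there $-E$ is an edge of $P = -P$ with primitive inner normal $-u$ and height $h$, so $P \subseteq \setcond{\vec{x} \in \NQ}{u(\vec{x}) \le h} \subseteq \setcond{\vec{x} \in \NQ}{u(\vec{x}) \le 2h}$ (in fact $P$ already lies in the smaller strip between $E$ and $-E$). For the $3$-symmetric case, I would fix $G \in \Aut(P)$ of order $3$ and note that $GE$ and $G^2 E$ are again edges of $P = GP$, with primitive inner normals $u \circ G^{-1}$ and $u \circ G^{-2}$ and the same height $h$ as $E$. An order-three element of $\GL_2(\ZZ)$ has characteristic polynomial $x^2 + x + 1$, so $I + G + G^2 = 0$ by Cayley--Hamilton, whence $I + G^{-1} + G^{-2} = 0$, i.e.\ $u \circ G^{-1} + u \circ G^{-2} = -u$. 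Evaluating at an arbitrary $\vec{x} \in P$ gives $u(G^{-1}\vec{x}) \ge -h$ and $u(G^{-2}\vec{x}) \ge -h$; adding these two inequalities and applying the identity yields $-u(\vec{x}) \ge -2h$, that is, $u(\vec{x}) \le 2h$, which completes the proof.

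The weight of the argument lies in the $3$-symmetric case, and the step to get right there is the direction of the group action: that $GE$ has inner normal $u \circ G^{-1}$ rather than $u \circ G$, and correspondingly that it is precisely the inequalities coming from $GE$ and $G^2 E$ (not the one from $E$ itself) that must be summed. The single non-formal input is the identity $I + G + G^2 = 0$ for an order-three lattice automorphism; this is what produces the constant $2h$, and once it is in hand everything else is bookkeeping. I do not expect a real obstruction, since the centrally symmetric case is immediate and the $3$-symmetric case reduces to this one short linear computation.
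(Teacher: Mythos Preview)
Your proof is correct. The setup, the centrally symmetric case, and the characteristic-polynomial computation for the $3$-symmetric case are all sound; in particular, an order-$3$ element of $\GL_2(\ZZ)$ is diagonalisable (its minimal polynomial divides $x^3-1$, which has simple roots) and cannot have eigenvalue $1$, so the characteristic polynomial is indeed $x^2+x+1$, giving $I+G+G^2=0$.

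Your route differs from the paper's in the $3$-symmetric case. The paper intersects the three supporting half-spaces of $E$, $GE$, $G^2E$ to form a triangle $Q \supseteq P$, observes (via the $G$-action on its vertices) that $Q$ has weights $(1,1,1)$, writes down an explicit representative of $Q$ with one edge on the line $\{u=-h\}$, and reads off that the opposite vertex sits at height $2h$. You instead sum the two linear inequalities coming from $GE$ and $G^2E$ and use the identity $u\circ G^{-1}+u\circ G^{-2}=-u$ (equivalently $I+G+G^2=0$) to get $u(\vec{x})\le 2h$ directly. Both arguments rest on the same three half-spaces; the paper's is geometric and reuses the triangle $Q$ already introduced in Lemma~\ref{lemma:ord3_det}, while yours is a purely linear computation that avoids any explicit coordinates. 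Your version is shorter and makes transparent exactly why the constant $2$ appears (it is the number of non-identity powers of $G$).
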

\begin{proof}
Let~$S$ be the strip supported by~$E$ and~$-2E$, i.e.\ $S \coloneqq \setcond{\vec{x} \in \NQ}{-h \le u(\vec{x}) \le 2h}$, where~$u$ is the primitive inner normal to~$E$ and~$h$ is the height of~$E$. If~$P$ is centrally symmetric, then~$-E$ is an edge of~$P$. So, by convexity,~$P \subseteq S$. Otherwise,~$P$ is~$3$-symmetric. Now, as in the proof of Lemma~\ref{lemma:ord3_det}, consider the triangle~$Q \coloneqq \setcond{\vec{x} \in \NQ}{(uG^k)(\vec{x}) \ge -h, \text{ for } k=0,1,2}$. It has weights~$(1,1,1)$. Thus,~$Q$ is isomorphic to the triangle with vertices~$(-a,-h)$,~$(\ell-a,-h)$, and~$(2a-\ell,2h)$, for some~$a, \ell > 0$. So,~$Q$ is contained in the strip~$S$ and, since~$P \subseteq Q$, the result follows.
\end{proof}
We can now use the above two lemmas to complete the first step, which was to prove the following statement.
\begin{proposition}\label{prop:one_cs_per_extended_basket}
Let~$P,Q \subset \NQ$ be symmetric Fano polygons with non-empty basket of R-singularities. Suppose they have the same edge data, i.e.\ $\Ec(P) = \Ec(Q)$. Then~$P$ is isomorphic to~$Q$.
\end{proposition}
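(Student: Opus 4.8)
The plan is to reconstruct a symmetric Fano polygon from its edge data, up to $\SL_2(\ZZ)$, by unrolling its boundary one vertex at a time and using Lemma~\ref{lemma:sym_det_fixed} to pin down each successive edge. The starting observation is a combinatorial one: the only edges $E$ with $\ell_E h_E \le 2$ are the two pure long edges of height $1$, namely those with Hermite normal form $\left(\begin{smallmatrix}1&0\\0&1\end{smallmatrix}\right)$ or $\left(\begin{smallmatrix}1&0\\0&2\end{smallmatrix}\right)$. Indeed, there is no edge with primitive vertices of length $1$ and even height (the two primitivity conditions along such an edge are incompatible modulo $2$), so a short edge has either $\ell_E \ge 2$ or height $\ge 3$, whence $\ell_E h_E \ge 3$; thus $\ell_E h_E \le 2$ forces $E$ long of height $1$. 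Consequently, if $P$ has any edge with $\ell_E h_E \le 2$ then it has a height-$1$ edge, so by Lemma~\ref{lemma:height_bound_3sym} it is centrally symmetric, and by Lemma~\ref{lemma:height_bound} it cannot have two pairs of long edges; hence it has exactly one pair $\pm E_1$ of long edges and all its other edges are short (so have $\ell h \ge 3$). Since $Q$ has the same edge data, the same holds for $Q$.

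For the reconstruction, cyclically relabel so that the $i$-th edges $E_i$ of $P$ and $E_i'$ of $Q$ (anticlockwise) have the same Hermite normal form $H_i$; as $\ell_E$ and $h_E$ are determined by $H_E$, this also matches the long/short pattern. Let $m$ be the number of edges. Writing $A_{E_1}=UH_1$ with $U$ unimodular, the anticlockwise ray generators of $\cone(E_1)$ have positive determinant, so $\det U=+1$; applying $U^{-1}$ to $P$ (which leaves $\Ec(P)$ unchanged) and the analogue to $Q$, I may assume $E_1=E_1'$, so the vertices satisfy $\vec{v}_1=\vec{v}_1'$ and $\vec{v}_2=\vec{v}_2'$. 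The core is to show $\vec{v}_{k+1}=\vec{v}_{k+1}'$ for all $k$. Assuming inductively $E_k=E_k'$ (hence the same primitive inner normal $u_k$ and height $h_k$), I apply Lemma~\ref{lemma:sym_det_fixed} with $E:=E_k$ and $H:=H_{k+1}$: the resulting set $\Fc$ depends only on $E_k$ and $H_{k+1}$, so it is the same for $P$ and $Q$, and it suffices to check that the true edge $E_{k+1}$ of $P$ lies in $\Fc$ --- then so does $E_{k+1}'$, forcing $E_{k+1}=E_{k+1}'$ and $\vec{v}_{k+2}=\vec{v}_{k+2}'$. Conditions (i) and (ii) of the lemma are immediate; for (iii), the lower bound $u_k(\vec{v}_{k+2})>-h_k$ holds because $\vec{v}_{k+2}$ is a vertex of $P$ distinct from the endpoints of $E_k$ (using $m\ge 3$), and the upper bound follows from Lemma~\ref{lemma:sym_height_bound}, which gives $u_k(\vec{v}_{k+2})\le 2h_k\le (r_{k+1}-1)h_k$ as soon as $r_{k+1}=\ell_{k+1}h_{k+1}\ge 3$.

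This completes the proof when every edge of $P$ has $\ell h \ge 3$. In the remaining case, $P$ and $Q$ are centrally symmetric with exactly one pair $\pm E_1$ of long edges and all other edges short; here I would run the induction only from $E_1$ through the short edges $E_2,\ldots,E_{m/2}$ up to $-E_1=E_{m/2+1}$. Every such step has $r_{k+1}\ge 3$ because the next edge is short, so the argument above gives $\vec{v}_i=\vec{v}_i'$ for $i\le m/2+1$; central symmetry of $P$ and of $Q$ then yields $\vec{v}_{m/2+1+j}=-\vec{v}_{1+j}$ in both, so $P=Q$ after the $\SL_2(\ZZ)$-transformations, i.e.\ $P\isom Q$.

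The main obstacle is exactly the verification of hypothesis (iii) of Lemma~\ref{lemma:sym_det_fixed}. The candidate positions for $\vec{v}_{k+2}$ form an arithmetic progression spaced $r_{k+1}h_k$ apart in the $u_k$-direction, and the window in (iii) has precisely this width; since symmetry only confines $\vec{v}_{k+2}$ to a band of width $3h_k$, the vertex is pinned down uniquely exactly when $r_{k+1}\ge 3$. Securing $r_{k+1}\ge 3$ at every required step is the whole difficulty, and it is what forces us to use the classification of edges with $\ell h\le 2$, the height bounds of Lemmas~\ref{lemma:height_bound} and~\ref{lemma:height_bound_3sym} (which confine height-$1$ edges to the single-long-pair centrally symmetric case), and the device of skipping over the opposite long edge by central symmetry in that case.
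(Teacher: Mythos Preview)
Your proof is correct and follows essentially the same route as the paper: reconstruct the polygon edge by edge via Lemma~\ref{lemma:sym_det_fixed}, with the key step being to verify condition~(iii), which reduces to showing $r_{k+1}=\ell_{k+1}h_{k+1}\ge 3$.

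The only difference is in how $r\ge 3$ is justified. The paper argues that every edge $F$ has $h_F\ge 2$ (citing Lemmas~\ref{lemma:height_bound} and~\ref{lemma:height_bound_3sym}) and then uses primitivity when $h_F=2$ to force $\ell_F\ge 2$. You instead classify the exceptional edges with $\ell h\le 2$ directly and carve out a separate branch (centrally symmetric with a single height-$1$ long pair) to be finished by central symmetry. In fact that branch is vacuous: a centrally symmetric Fano polygon contained in the strip $\{-1\le y\le 1\}$ whose bottom edge has length at most $2$ is forced to be reflexive, contradicting the non-empty-basket hypothesis. So your separate case is unnecessary, though harmless. Conversely, the paper's intermediate assertion $h_F\ge 2$ is not literally true in the one-long-pair centrally symmetric situation---for instance $\conv\{\pm(1,3),\pm(1,-2)\}$ has height-$1$ long edges and non-empty basket---but there $\ell_F\ge 3$, so $r\ge 3$ still holds; your direct analysis of $\ell h$ sidesteps this wrinkle cleanly. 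One small correction: the Hermite normal form for the $\ell=2$, $h=1$ edge is $\left(\begin{smallmatrix}1&1\\0&2\end{smallmatrix}\right)$, not $\left(\begin{smallmatrix}1&0\\0&2\end{smallmatrix}\right)$, since the top-right entry must be coprime to $r$.
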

\begin{proof}
Let~$\Ec(P) = \Ec(Q) = \set{H_1, H_2, \ldots, H_m}$. To get the desired result, we aim to repeatedly apply Lemma~\ref{lemma:sym_det_fixed}. In order to do so, we must show that all the hypotheses of the lemma hold. Let~$E$ and~$F$ be two consecutive edges of~$P$, ordered anticlockwise. Let~$H$ be the Hermite normal form of~$F$. Since~$P$ is Fano, the vertices of~$F$ are primitive; hence,~$H$ satisfies the requirements of Lemma~\ref{lemma:sym_det_fixed}. We next show that~$F$ belongs to the set~$\Fc$ of Lemma~\ref{lemma:sym_det_fixed}.

Condition (i) is satisfied by the definition of~$H$. Since~$E$ is adjacent to~$F$, condition (ii) is satisfied. It remains to show condition (iii). Consider the second vertex~$\vec{v}_2$ of~$F$. Since~$\vec{v}_2$ does not lie on~$E$, the first inequality~$u_E(\vec{v}_2) > -h$ holds. In order to prove the second inequality, we must use the fact that~$P$ is symmetric. By Lemma~\ref{lemma:sym_height_bound}, we have~$u_E(\vec{v}_2) \le 2h$. Now, it remains to show that~$2h \le (r-1)h$, i.e.\ that~$r \ge 3$.

We note that~$r = \ell_F h_F$. Since~$P$ is symmetric and has empty basket, it follows from Lemma~\ref{lemma:height_bound} and Lemma~\ref{lemma:height_bound_3sym} that~$h_F \ge 2$. If~$h_F \ge 3$, then~$r \ge 3$ and we are done. Otherwise,~$F$ has height~$h_F = 2$. Since the vertices of~$P$ are primitive, the length~$\ell_F$ of~$F$ must be even; hence,~$\ell_F \ge 2$. So,~$r \ge 4$, and we are done.

We may now prove the statement of the proposition. Label anticlockwise the edges of~$P$ and~$Q$ as~$E_1, E_2, \ldots, E_m$ and~$E_1', E_2', \ldots, E_m'$, respectively, so that~$E_i$ and~$E_i'$ have Hermite normal form~$H_i$. So, there exists some~$U \in \SL_2(\ZZ)$ such that~$UE_1' = E_1$. The next edge of~$UQ$ is~$UE_2'$. We may apply Lemma~\ref{lemma:sym_det_fixed} to obtain~$UE_2' = E_2$. By induction, we obtain that~$UE_i' = E_i$ for all~$i = 1, 2, \ldots, m$. Therefore,~$Q = UP$, and we are done.
\end{proof}
We now move onto the final step in order to prove Theorem~\ref{thm:sym_unique}. Before we proceed with the final proposition of the section, we require the following small result.
\begin{lemma}\label{lemma:one_edge_below}
Let~$P \subset \NQ$ be a polygon. Let~$u \in M$ be primitive and let~$n \ge 2$ be an integer. Suppose there are exactly~$n$ vertices~$\vec{v}$ of~$P$ which satisfy~$u(\vec{v}) \le 0$. If~$P$ is centrally symmetric, then~$P$ has at most~$2n$ vertices. If~$P$ is~$3$-symmetric, then~$P$ has at most~$3n$ vertices.
\end{lemma}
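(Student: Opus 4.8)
The plan is to handle both cases uniformly using the relevant cyclic subgroup of $\Aut(P)$. I would let $\Gamma \le \Aut(P)$ be $\langle -\mathrm{Id}\rangle$ (of order~$2$) in the centrally symmetric case and $\langle G\rangle$ (of order~$3$) in the $3$-symmetric case, so that $|\Gamma|$ is exactly the factor multiplying~$n$ in the desired bound. Since $\Gamma$ permutes the vertex set $\verts(P)$, the orbit--stabiliser theorem gives that every $\Gamma$-orbit has size at most~$|\Gamma|$, whence $|\verts(P)| \le |\Gamma|\cdot t$, where $t$ is the number of $\Gamma$-orbits on $\verts(P)$. It therefore suffices to show $t \le n$.

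The key observation I would use is that every $\Gamma$-orbit contains a vertex in the closed half-plane $\setcond{\vec{x}\in\NQ}{u(\vec{x}) \le 0}$. Indeed, for any vertex $\vec{v}$ the orbit sum is $\sum_{\gamma\in\Gamma}\gamma\vec{v} = \bigl(\sum_{\gamma\in\Gamma}\gamma\bigr)\vec{v} = \vec{0}$: the identity $\sum_{\gamma\in\Gamma}\gamma = 0$ is immediate for $\Gamma = \langle -\mathrm{Id}\rangle$, and for $\Gamma = \langle G\rangle$ it follows because an order-$3$ element $G \neq \mathrm{Id}$ of $\GL_2(\ZZ)$ has minimal polynomial $x^2 + x + 1$ (the only divisor of $x^3-1$ of degree at most~$2$ other than $x-1$), so $\mathrm{Id}+G+G^2 = 0$. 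Applying the primitive form $u$ then gives $\sum_{\gamma\in\Gamma} u(\gamma\vec{v}) = 0$, so $u(\gamma\vec{v}) \le 0$ for at least one $\gamma \in \Gamma$; and $\gamma\vec{v}$ is again a vertex of $P$ since $\gamma \in \Aut(P)$.

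To finish, I would pick one such vertex from each $\Gamma$-orbit; this defines an injection from the set of orbits into $\setcond{\vec{v}\in\verts(P)}{u(\vec{v})\le 0}$, which by hypothesis has exactly~$n$ elements, so $t \le n$. Combining with the first paragraph, $|\verts(P)| \le |\Gamma|\cdot t \le |\Gamma|\cdot n$, which is $2n$ when $P$ is centrally symmetric and $3n$ when $P$ is $3$-symmetric. There is no genuine obstacle in this argument: the only points requiring any care are the vanishing of the group sum $\sum_{\gamma\in\Gamma}\gamma$ (equivalently, that no nontrivial element of $\Gamma$ has eigenvalue~$1$, a fact already exploited in the proof of Lemma~\ref{lemma:ord3_det}) and the observation that only the weak inequality $u(\gamma\vec{v})\le 0$ is needed, so a vertex landing exactly on the line $\{u = 0\}$ causes no problem. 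Note that the hypothesis $n \ge 2$ is not actually used.
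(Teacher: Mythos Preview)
Your proof is correct and takes a genuinely different route from the paper. The paper argues the two cases separately and geometrically: for centrally symmetric~$P$ it counts vertices on, below, and above the line $\{u=0\}$ directly (actually obtaining the sharper statement that $P$ has exactly $2n$ or $2n-2$ vertices), while for $3$-symmetric~$P$ it fixes the vertex $\vec{v}_0$ just above the half-plane, looks at the triangle $\conv\{\vec{v}_0, G\vec{v}_0, G^2\vec{v}_0\}$, and uses a convexity argument to force $G\vec{v}_0$ to coincide with one of the $n$ vertices in the half-plane, whence the vertex count is $3i \le 3n$. Your argument is cleaner and more uniform: the single algebraic identity $\sum_{\gamma\in\Gamma}\gamma = 0$ (equivalently, $\Gamma$ has no nonzero fixed vector) immediately forces every orbit to meet the closed half-plane, and then orbit counting finishes both cases at once. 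The trade-off is that the paper's centrally symmetric case extracts a slightly stronger conclusion, and its $3$-symmetric case pins down the cyclic structure of the vertex set more explicitly; your method, on the other hand, generalises painlessly to any finite $\Gamma \le \Aut(P)$ with trivial fixed subspace and, as you note, makes the hypothesis $n \ge 2$ visibly superfluous.
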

\begin{proof}
Without loss of generality, we may take~$u = (0,1)^t$. Let~$\vec{v}_1, \vec{v}_2, \ldots, \vec{v}_n$ be the~$n$ vertices of~$P$ satisfying~$u(\vec{v}_i) \le 0$, i.e.\ they lie on or below the~$x$-axis. We assume that the vertices are ordered anticlockwise.

First, suppose that~$P$ is centrally symmetric. Then~$P$ has either~$0$ or~$2$ vertices on the~$x$-axis and either~$n$ or~$n-2$ vertices strictly below it, respectively. By the central symmetry of~$P$, it has either~$n$ or~$n-2$ vertices strictly above it, respectively. So,~$P$ has either~$2n$ or~$2n-2$ vertices in total. In particular,~$P$ has at most~$2n$ vertices.

Instead, suppose that~$P$ is~$3$-symmetric. Let~$G \in \Aut(P)$ be an element of order~$3$. Let~$\vec{v}_0$ be the vertex of~$P$ immediately before~$\vec{v}_1$ and let~$\vec{v}_{n+1}$ be the vertex of~$P$ immediately after~$\vec{v}_n$. By assumption, these two vertices lie strictly above the~$x$-axis. Consider the triangle~$T = \conv\set{\vec{v}_0, G\vec{v}_0, G^2\vec{v}_0}$ and the line~$L$ passing through~$\vec{v}_0$ and the origin~$\vec{0}$. Without loss of generality,~$G\vec{v}_0$ lies below~$L$ and~$G^2\vec{v}_0$ lies above~$L$. By convexity of~$P$,~$G\vec{v}_0$ in fact lies below the~$x$-axis. Thus,~$G\vec{v}_0 = \vec{v}_i$ for some~$i = 1,2,\ldots,n$. This means that the vertices of~$P$ are~$\vec{v}_0, \vec{v}_1, \ldots, \vec{v}_{i-1}, G\vec{v}_0, G\vec{v}_1, \ldots, G\vec{v}_{i-1}, G^2\vec{v}_0, G^2\vec{v}_1, \ldots, G^2\vec{v}_{i-1}$. Therefore,~$P$ has~$3i \le 3n$ vertices.

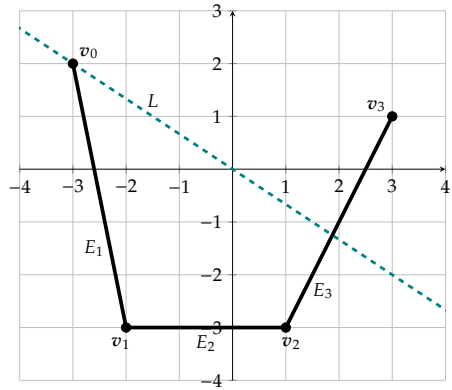
\begin{figure}[h]
\centering
\begin{tikzpicture}[scale=0.7][line cap=round,line join=round,x=1cm,y=1cm]
 \begin{axis}[
 x=1cm,y=1cm,
 axis lines=middle,
 ymajorgrids=true,
 xmajorgrids=true,
 xmin=-4,
 xmax=4,
 ymin=-4,
 ymax=3,
 xtick={-5,...,7},
 ytick={-4,...,3}]

 \draw[line width=2pt] (-3,2) -- (-2,-3); 
 \draw[line width=2pt] (-2,-3) -- (1,-3); 
 \draw[line width=2pt] (1,-3) -- (3,1); 
 \draw[dashed, color=teal, line width=1.5] (-6,4) -- (6,-4); 
 
 \begin{scriptsize}
 \draw [fill=black] (-3,2) circle (2.5pt);
 \draw[color=black] (-2.7, 2.2) node {$\vec{v}_0$};
 
 \draw [fill=black] (-2,-3) circle (2.5pt);
 \draw[color=black] (-2.1, -3.3) node {$\vec{v}_1$};
 
 \draw [fill=black] (1,-3) circle (2.5pt);
 \draw[color=black] (1.1, -3.3) node {$\vec{v}_2$};

 \draw [fill=black] (3,1) circle (2.5pt);
 \draw[color=black] (2.7, 1.2) node {$\vec{v}_3$};

 \draw[color=black] (-2.6, -1.5) node {$E_1$};
 \draw[color=black] (-0.5, -3.3) node {$E_2$};
 \draw[color=black] (1.7, -2.3) node {$E_3$};
 \draw[color=black] (-1.5, 1.3) node {$L$};
 \end{scriptsize}
 \end{axis}
\end{tikzpicture}
\caption{An example with~$n=2$ and~$P$ is assumed to be~$3$-symmetric. The points~$\vec{v}_1$ and~$\vec{v}_2$ are the only two vertices of~$P$ below the~$x$-axis. One of these must also be a vertex of the triangle~$T$.}
\label{fig:sym_edge_below}
\end{figure}
\end{proof}
We may now complete the final step, which was to prove the following statement.
\begin{proposition}\label{prop:ESC_same}
Let~$P$ and~$Q$ be symmetric Fano polygons with non-empty baskets. If they are mutation-equivalent, then their edge data is equal.
\end{proposition}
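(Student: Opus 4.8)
The plan is to show that the edge data $\Ec(P)$ of a symmetric Fano polygon with non-empty basket is determined by its directed singularity content $\SC(P)=(n_P,\dirbasket_P)$ and the Ehrhart series $\Ehr_{P^*}(t)$ of its dual. Since all of these are mutation-invariant, mutation-equivalent symmetric polygons with non-empty basket then have the same edge data (after aligning their directed baskets by a $\GL_2(\ZZ)$-transformation if necessary). Some cases can be dealt with at once. If $P$ has no long edge it is the only polygon in its mutation-equivalence class, so $Q\isom P$; hence we may assume $n_P=n_Q\ge1$, so that both polygons have a long edge. If $P$ is centrally symmetric with exactly one pair of long edges, Proposition~\ref{prop:one_long_edge_cs} already gives $P\isom Q$. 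So assume $P$ has at least two pairs of long edges (centrally symmetric case) or at least one triple of long edges ($3$-symmetric case); by Corollary~\ref{cor:num_edges_cs} or Corollary~\ref{cor:num_edges_3sym} these are then \emph{all} of its long edges, and every short edge of $P$ has height strictly larger than every long-edge height.

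Next I would extract the heights of the long edges. Combining $\Ehr_{P^*}=\Ehr_{Q^*}$ with Propositions~\ref{prop:heights_fixed_cs} and~\ref{prop:heights_fixed_3_sym}, the coefficients of $\Ehr_{P^*}(t)$ equal $1$ up to some index, and the first value after these initial $1$'s is $3$ if $P$ is centrally symmetric with distinct long-edge heights $h_1<h_2$, is $5$ if $P$ is centrally symmetric with $h_1=h_2$, and is $4$ if $P$ is $3$-symmetric; moreover the indices at which the coefficient first attains $3$ and $5$ (resp.\ $4$) are exactly the long-edge heights. Since $3$, $4$ and $5$ are distinct, $P$ and $Q$ have the same symmetry type, the same number of long edges, and the same long-edge heights. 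Note also that the lattice length and height of an edge --- hence the height of any short edge or long-edge residue --- are recovered from its Hermite normal form $\bigl(\begin{smallmatrix}1&a\\0&r\end{smallmatrix}\bigr)$ as $\gcd(a-1,r)$ and $r/\gcd(a-1,r)$. Consequently the elements of $\dirbasket_P$ of height exceeding $h_2$ are precisely the short edges of $P$, while those of height $h_1$ or $h_2$ are the residues of the non-pure long edges; this identifies the short edges and locates every non-pure long edge within the cyclic order.

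It remains to pin down, for each long edge, its full Hermite normal form --- equivalently, the number of primitive T-singularities on it --- and, for each \emph{pure} long edge, its position among the short edges. I would argue as in the proof of Proposition~\ref{prop:one_cs_per_extended_basket}: normalise so that the primitive normals of two successive long edges are $\ed1$ and $\ed2$, exactly as in the proofs of Lemmas~\ref{lemma:det_one} and~\ref{lemma:sym_height_bound} (in the $3$-symmetric case this also fixes the order-$3$ automorphism $G$), and then chain the prescribed short edges lying between consecutive long edges, using the rigidity of Lemma~\ref{lemma:sym_det_fixed}. The point is that the closing-up requirement --- that this chain of edges returns to its starting vertex and bounds a convex polygon with the origin in its strict interior --- together with the known heights $h_1,h_2$ and the central (resp.\ order-$3$) symmetry, leaves at most one choice for the lengths of the long edges and for the placement of the pure ones. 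Running the same reconstruction for $Q$, which shares all of these invariants with $P$, produces the same edge data, proving the proposition (indeed, $P\isom Q$).

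The step I expect to be the main obstacle is this last one: showing that the closing-up constraint has at most one solution. This is where Lemma~\ref{lemma:one_edge_below} enters. Taking $u$ to be a primitive normal of a long edge, the number of vertices $\vec x$ of $P$ with $u(\vec x)\le 0$ depends on how far the opposite (or, in the $3$-symmetric case, the other) long edges extend past the line $\{u(\vec x)=0\}$, and Lemma~\ref{lemma:one_edge_below} bounds this count against the total number of vertices of $P$ --- which is already fixed by $\dirbasket_P$ together with the heights --- thereby confining the long-edge lengths to a bounded range; the height estimates of Lemma~\ref{lemma:height_bound}, Proposition~\ref{prop:heights_fixed_cs} and their $3$-symmetric analogues then remove the residual ambiguity. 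Carrying out this bookkeeping uniformly over the finitely many ways the short edges can be distributed around the long edges is the technical heart of the argument.
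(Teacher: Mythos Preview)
Your overall strategy aligns with the paper's: reduce to the case where both polygons have the maximal configuration of long edges, read off the symmetry type and the long-edge heights from the first nontrivial coefficients of $\Ehr_{P^*}(t)$ via Propositions~\ref{prop:heights_fixed_cs} and~\ref{prop:heights_fixed_3_sym}, and then use the directed basket together with Lemmas~\ref{lemma:sym_det_fixed} and~\ref{lemma:one_edge_below} to force the edge data to agree. You have correctly identified every ingredient the paper uses.

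The gap is exactly where you say it is. Your ``closing-up'' argument is only a heuristic: you have not explained how Lemma~\ref{lemma:one_edge_below} and the height bounds rule out two symmetric polygons sharing $\dirbasket$, the long-edge heights, and $n_P$, but with the primitive T-singularities split differently between the long edges, or with a pure long edge inserted at a different gap in the cyclic order. Saying the bookkeeping can be carried out ``uniformly over the finitely many ways the short edges can be distributed'' is not a proof, and it is not obvious that a direct reconstruction-from-invariants argument goes through without essentially reproducing the paper's case analysis.

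The paper does not attempt to reconstruct $P$ from its invariants. Instead it compares $\Ec(P)=\{H_1,\ldots,H_m\}\times g$ and $\Ec(Q)=\{H'_1,\ldots,H'_m\}\times g$ directly, by induction on the index after aligning both with the common directed basket. The base case starts at a short edge (one exists, else $\Bc=\varnothing$). In the inductive step the cases where $H_{k+1}$ and $H'_{k+1}$ are both short, or both long, are handled using the Ehrhart constraint. The substantive case is when $H_{k+1}$ is a pure long edge but $H'_{k+1}$ is the next short basket element. Here the paper applies a shear (two shears if $H_{k+2}$ is also long) to the edges of $P$ that collapses the long edge(s); Lemma~\ref{lemma:one_edge_below} combined with Lemma~\ref{lemma:sym_height_bound} is used precisely to verify condition~(iii) of Lemma~\ref{lemma:sym_det_fixed} for the sheared edge, forcing it to coincide with the corresponding edge of $Q$. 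This places the inner normal of the collapsed long edge into $hQ^*$, contradicting Proposition~\ref{prop:heights_fixed_cs} or~\ref{prop:heights_fixed_3_sym}. This shear-then-contradict manoeuvre is the concrete mechanism your outline is missing.
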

\begin{proof}
If~$P$ and~$Q$ are isomorphic, then their edge data is equal and we are done. So, let us assume that~$P$ is not isomorphic to~$Q$. Thus,~$P$ must have at least one long edge; otherwise,~$P$ has no mutations and~$P \isom Q$ -- a contradiction. If~$P$ is centrally symmetric then, by Proposition~\ref{prop:one_long_edge_cs},~$P$ is isomorphic to~$Q$ -- a contradiction. Thus, either~$P$ is centrally symmetric with two pairs of long edges or~$P$ is~$3$-symmetric with one triple of long edges. The same holds for~$Q$. Applying Proposition~\ref{prop:heights_fixed_cs} and Proposition~\ref{prop:heights_fixed_3_sym}, we see that~$P$ and~$Q$ must either both be centrally symmetric or both be~$3$-symmetric; otherwise, the Ehrhart series~$\Ehr_{P^*}(t)$ and~$\Ehr_{Q^*}(t)$ would differ -- this is a contradiction because the Ehrhart series of the dual polygon is a mutation-invariant, and~$P$ and~$Q$ are assumed to be mutation-equivalent. Moreover, by Corollary~\ref{cor:num_edges_cs} and Corollary~\ref{cor:num_edges_3sym}, we see that the number of edges is fixed, i.e.\ $|\Ec(P)| = |\Ec(Q)|$.

So, let's write~$\Ec(P) = \set{H_1, H_2, \ldots, H_m} \times g$ and~$\Ec(Q) = \set{H'_1, H'_2, \ldots, H'_m} \times g$, for some integer~$m \ge 1$ and where~$g = 2$ if~$P$ is centrally symmetric and~$g = 3$ if~$P$ is~$3$-symmetric (see Remark~\ref{remark:times_notation} for notation). The (directed) basket of~$P$ and~$Q$ can be written~$\Bc = \set{S_1, S_2, \ldots, S_b} \times g$. We may insist that~$\Ec(P)$ is~\emph{aligned} with~$\Bc$, i.e.\ that~$\res(H_1) = S_1$ and~$\res(H_{i_j}) = S_j$, for~$1 = i_1 < i_2 < \cdots < i_b \le m$. We may also insist that~$\Ec(Q)$ is aligned with~$\Bc$.

The strategy of the remainder of the proof is by induction on the elements of~$\Ec(P)$. In particular, we want to show that~$H_1 = H'_1$ and that if~$H_n = H'_n$ for~$1 \le n \le k$, where~$1 \le k < m$, then~$H_{k+1} = H'_{k+1}$. Given these two statements, it immediately follows that~$\Ec(P) = \Ec(Q)$.

\underline{Base case:}
We first prove that~$H_1$ and~$H'_1$ coincide. Suppose towards a contradiction that all the edges of~$P$ are long. If~$P$ is centrally symmetric, then~$P$ is isomorphic to the box with vertices~$\pm(h_1, h_2)$ and~$\pm(h_1, -h_2)$, for some integers~$2 \le h_1 \le h_2$. By Corollary~\ref{cor:num_edges_cs},~$Q$ is also a quadrilateral. By Lemma~\ref{lemma:det_one},~$Q$ is isomorphic to the box with vertices~$\pm(h'_1, h'_2)$ and~$\pm(h'_1, -h'_2)$, for some integers~$2 \le h'_1 \le h'_2$. By Proposition~\ref{prop:heights_fixed_cs}, its edges have heights~$h_1$ and~$h_2$, i.e.\ $h_1 = h'_1$ and~$h_2 = h'_2$. Thus,~$P \isom Q$, a contradiction. Otherwise,~$P$ is~$3$-symmetric. So, since all its edges are long, it is isomorphic to the triangle with vertices~$(-h,-h)$,~$(2h,-h)$, and~$(-h,2h)$. Since~$P$ is Fano, its vertices must be primitive. Thus,~$h=1$, which implies that~$\Bc = \varnothing$, another contradiction.

So, we may conclude that there is an edge of~$P$ which is short. Since the edge data~$\Ec(P)$ is cyclically ordered, we may relabel its elements so that~$H_1$ represents a short edge; thus,~$H_1 = \res(H_1) = S_1$. Now, this implies that~$S_1$ has height strictly greater than the heights of the long edges of~$P$ and~$Q$. Since~$\res(H'_1) = S_1$, it follows that~$H'_1$ is also short. Thus,~$H_1 = H'_1 = S_1$.

\underline{Inductive step:}
Let~$1 \le k < m$ and suppose that~$H_n = H'_n$ for all~$1 \le n \le k$. Let~$1 \le j \le b$ be the index of the previous R-singularity~$S_j$ in~$\set{\res(H_1), \ldots, \res(H_k)}$, i.e.\ $i_j \le k$ and either~$j=b$ or~$k < i_{j+1}$. Consider~$H_{k+1}$ and~$H'_{k+1}$. We split into three cases: (i) both are short; (ii) both are long; and (iii) one is short and one is long.

Case (i). If both forms represent short edges, then~$H_{k+1} = S_{j+1}$ and~$H'_{k+1} = S_{j+1}$, i.e.\ they coincide.

Case (ii). Now suppose that~$H_{k+1}$ and~$H'_{k+1}$ both represent long edges. Fix the edge of~$P$ and~$Q$ represented by~$H_k$ so that its second vertex is~$(1,0)$. Now, the primitive inner normal vectors of the next edges of~$P$ and~$Q$, which are represented by~$H_{k+1}$ and~$H'_{k+1}$, respectively, are forced to be the same; otherwise, we would violate Proposition~\ref{prop:heights_fixed_cs} or Proposition~\ref{prop:heights_fixed_3_sym}. In order to show that~$H_{k+1}$ and~$H'_{k+1}$ coincide, we show that the next edges have the same length.

Since the edges represented by~$H_{k+1}$ and~$H'_{k+1}$ share a vertex and have the same primitive inner normal, they must have the same height. Hence, they have the same number of primitive T-singularities. If they have the same residue, then we are done. Otherwise, due to the directed basket being a mutation invariant, we must have that, without loss of generality,~$H_{k+1}$ has a residue while~$H'_{k+1}$ has no residue. Now, consider the Hermite normal form~$H'_{k+2}$ in~$\Ec(Q)$. ~$H'_{k+2}$ must represent a long edge with the same residue as~$H_{k+1}$. The long edges represented by~$H'_{k+1}$ and~$H'_{k+2}$ have the same height~$h$. By Lemma~\ref{lemma:det_one} and Lemma~\ref{lemma:ord3_det}, we can transform~$Q$ so that the common vertex of these long edges is~$(h,h)$. Since~$Q$ is Fano, its vertices must be primitive. Thus,~$h=1$. But this is a contradiction; for example, the residue of~$H_{k+1}$ is now empty. So,~$H_{k+1} = H'_{k+1}$.

Case (iii). Finally, we suppose that one of~$H_{k+1}$ and~$H'_{k+1}$ is short and the other is long. Without loss of generality, we may assume that the former is long and the latter is short. So,~$H_{k+1} = T$ represents a~\emph{pure} long edge, i.e.\ $\res(H_{k+1}) = \varnothing$, and~$H'_{k+1} = S_{j+1}$ is short. Now, consider the next edge represented in~$\Ec(P)$. There are two subcases: either (a)~$H_{k+2} = S_{j+1}$ represents a short edge or (b)~$H_{k+2} = T'$ represents a pure long edge. We aim to show that both subcases are impossible to achieve.

Subcase (a): As in Figure~\ref{fig:subcase_a}, transform~$P$ so that the edge~$E$ represented by~$T$ has inner normal~$(0,1)^t$. Denote by~$F_1$ and~$F_2$ the edges adjacent to~$E$ which are represented by~$H_k$ and~$S_{j+1}$, respectively. Let~$\vec{v}_0, \vec{v}_1, \vec{v}_2, \vec{v}_3 \in N$ be the vertices which form the edges~$F_1$,~$E$, and~$F_2$, ordered anticlockwise. Consider the shear~$A$ which maps~$\vec{v}_1$ to~$\vec{v}_2$. The edge~$AF_1$ is now adjacent to~$F_2$. We may also transform~$Q$ so that its edge represented by~$S_{j+1}$ is also~$F_2$. The aim is to show that~$AF_1$ must be the edge of~$Q$ represented by~$H_k$. If that is the case then, by convexity,~$Q$ is contained in the half-space~$\set{y \ge -h}$, i.e.\ $(0,1)^t \in hQ^*$, where~$h$ is the height of~$E$. But now, this contradicts Proposition~\ref{prop:heights_fixed_cs} and Proposition~\ref{prop:heights_fixed_3_sym}, since~$(0,1)^t$ is not an inner normal vector to an edge of~$Q$. So, we can conclude that subcase (a) is impossible to achieve. It remains to show that~$AF_1$ and~$F_2$ are indeed edges of~$Q$. To do this, we want to apply Lemma~\ref{lemma:sym_det_fixed}. In order to apply it, we must show its condition (iii) is satisfied, i.e.\ we want to show that~$A\vec{v}_0$ satisfies the chain of inequalities:~$-h_{F_2} < u_{F_2}(A\vec{v}_0) \le 2h_{F_2}$.

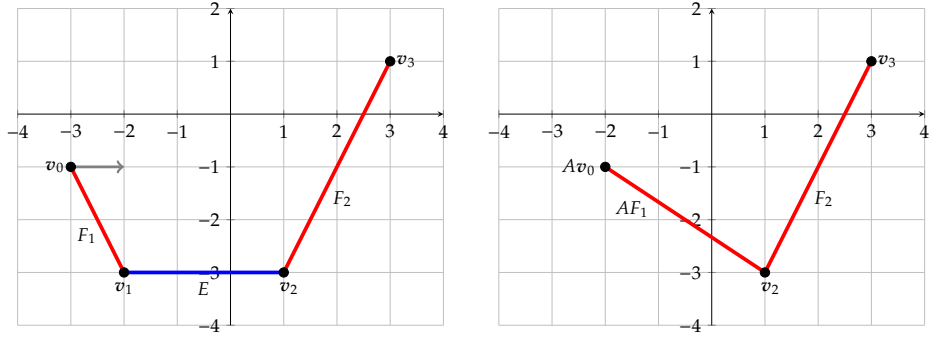
\begin{figure}[h]
\centering
\begin{subfigure}{.4\textwidth}
 \begin{tikzpicture}[scale=0.7][line cap=round,line join=round,x=1cm,y=1cm]
 \begin{axis}[
 x=1cm,y=1cm,
 axis lines=middle,
 ymajorgrids=true,
 xmajorgrids=true,
 xmin=-4,
 xmax=4,
 ymin=-4,
 ymax=2,
 xtick={-5,...,7},
 ytick={-4,...,2}]

 \draw [->,line width=1.5pt, color=gray] (-3,-1) -- (-2,-1); 
 \draw [line width=2pt, color=red] (-3,-1) -- (-2,-3); 
 \draw [line width=2pt, color=blue] (-2,-3) -- (1,-3); 
 \draw [line width=2pt, color=red] (1,-3) -- (3,1); 
 
 \begin{scriptsize}
 \draw [fill=black] (-3,-1) circle (2.5pt);
 \draw [fill=black] (-2,-3) circle (2.5pt);
 \draw [fill=black] (1,-3) circle (2.5pt);
 \draw [fill=black] (3,1) circle (2.5pt);

 \draw[color=black] (-3.3, -1) node {$\vec{v}_0$};
 \draw[color=black] (-2, -3.3) node {$\vec{v}_1$};
 \draw[color=black] (1.1, -3.3) node {$\vec{v}_2$};
 \draw[color=black] (3.3, 1) node {$\vec{v}_3$};

 \draw[color=black] (-2.7, -2.3) node {$F_1$};
 \draw[color=black] (-0.5, -3.3) node {$E$};
 \draw[color=black] (2.1, -1.6) node {$F_2$};
 \end{scriptsize}
 \end{axis}
 \end{tikzpicture}
\end{subfigure}
\begin{subfigure}{.4\textwidth}
 \begin{tikzpicture}[scale=0.7][line cap=round,line join=round,x=1cm,y=1cm]
 \begin{axis}[
 x=1cm,y=1cm,
 axis lines=middle,
 ymajorgrids=true,
 xmajorgrids=true,
 xmin=-4,
 xmax=4,
 ymin=-4,
 ymax=2,
 xtick={-5,...,7},
 ytick={-4,...,2}]

 \draw [line width=2pt, color=red] (-2,-1) -- (1,-3); 
 \draw [line width=2pt, color=red] (1,-3) -- (3,1); 
 
 \begin{scriptsize}
 \draw [fill=black] (-2,-1) circle (2.5pt);
 \draw [fill=black] (1,-3) circle (2.5pt);
 \draw [fill=black] (3,1) circle (2.5pt);
 
 \draw[color=black] (-2.5, -1) node {$A\vec{v}_0$};
 \draw[color=black] (1.1, -3.3) node {$\vec{v}_2$};
 \draw[color=black] (3.3, 1) node {$\vec{v}_3$};
 
 \draw[color=black] (-1.5, -1.8) node {$AF_1$};
 \draw[color=black] (2.1, -1.6) node {$F_2$};
 \end{scriptsize}
 \end{axis}
 \end{tikzpicture}
\end{subfigure}
\caption{The situation of subcase (a) in the proof of Proposition~\ref{prop:ESC_same}. In this example,~$E$ has length equal to its height; thus,~$A$ is the shear~$\left(\begin{smallmatrix}
 1 & -1 \\
 0 & 1
\end{smallmatrix}\right)$.}
\label{fig:subcase_a}
\end{figure}

By the assumption of subcase (a), if~$P$ is centrally symmetric, it has at least~$6$ edges and if~$P$ is~$3$-symmetric, it has at least~$9$ edges. So, in order to not contradict Lemma~\ref{lemma:one_edge_below}, it follows that at least one of~$\vec{v}_0$ and~$\vec{v}_3$ lies on or below the~$x$-axis. Without loss of generality,~$\vec{v}_0$ lies on or below the~$x$-axis.

We may now write~$\vec{v}_0 = \lambda\vec{v}_1 - (\mu,0)$, for some~$0 \le \lambda < 1$ and~$\mu > 0$. Thus,~$A\vec{v}_0 = \lambda\vec{v}_2 - (\mu,0)$. So,~$u_{F_2}(A\vec{v}_0) = -\lambda h_{F_2} -\mu u_{F_2}(1,0)$. Since~$u_{F_2}(1,0) < 0$, the first inequality holds. On the other hand,~$A\vec{v}_0 = \vec{v}_0 + (x,0)$ for some~$x \in \ZZ$. Since~$\vec{v}_0$ lies on or below the~$x$-axis, it follows that~$x \ge 0$. Furthermore, by Lemma~\ref{lemma:sym_height_bound}, we have~$u_{F_2}(\vec{v}_0) \le 2h_{F_2}$. Thus, the second inequality follows. As set out above, this is now enough to conclude that subcase (a) cannot occur.

Subcase (b): We first note that~$P$ (and~$Q$) must be centrally symmetric, since there are two long edges between~$H_1$ and~$H_m$. Thus, it follows that $H_{k+3}$ must be short, i.e. $H_{k+3} = S_{j+1}$. As in Figure~\ref{fig:subcase_b}, transform~$P$ so that the edge~$E$ represented by~$T$ has inner normal~$(0,1)^t$ and the edge~$E'$ represented by~$T'$ has inner normal~$(-1,0)^t$. Denote by~$F_1$ and~$F_2$ the edges adjacent to~$E$ which are represented by~$H_k$ and~$S_{j+1}$, respectively. Let~$\vec{v}_0, \vec{v}_1, \ldots, \vec{v}_4 \in N$ be the vertices which form the edges~$F_1$,~$E$,~$E'$, and~$F_2$, ordered anticlockwise.

\begin{figure}[h]
\centering
\begin{subfigure}{.4\textwidth}
 \begin{tikzpicture}[scale=0.7][line cap=round,line join=round,x=1cm,y=1cm]
 \begin{axis}[
 x=1cm,y=1cm,
 axis lines=middle,
 ymajorgrids=true,
 xmajorgrids=true,
 xmin=-3,
 xmax=4,
 ymin=-5.,
 ymax=4,
 xtick={-3,...,4},
 ytick={-5,...,4}]

 \draw [->,line width=1.5pt, color=gray] (-2,-3) -- (1,-3); 
 \draw [->,line width=1.5pt, color=gray] (1,-3) -- (1,-2); 
 \draw [line width=2pt, color=blue] (-1, -4) -- (3, -4);
 \draw [line width=2pt, color=blue] (3, -4) -- (3, -1);
 \draw [line width=2pt, color=red] (-1, -4) -- (-2, -3);
 \draw [line width=2pt, color=red] (3, -1) -- (2, 3);
 
 \begin{scriptsize}
 \draw [fill=black] (-1, -4) circle (2.5pt);
 \draw [fill=black] (3, -4) circle (2.5pt);
 \draw [fill=black] (3, -1) circle (2.5pt);
 \draw [fill=black] (-2, -3) circle (2.5pt);
 \draw [fill=black] (2, 3) circle (2.5pt);
 
 \draw[color=black] (-1.9, -2.7) node {$\vec{v}_0$};
 \draw[color=black] (-1, -4.3) node {$\vec{v}_1$};
 \draw[color=black] (2.9, -4.3) node {$\vec{v}_2$};
 \draw[color=black] (3.3, -0.9) node {$\vec{v}_3$};
 \draw[color=black] (1.7, 3) node {$\vec{v}_4$};
 
 \draw[color=black] (-1.7, -3.6) node {$F_1$};
 \draw[color=black] (1, -4.2) node {$E$};
 \draw[color=black] (3.2, -2.5) node {$E'$};
 \draw[color=black] (2.7, 1.2) node {$F_2$};
 \end{scriptsize}
 \end{axis}
 \end{tikzpicture}
\end{subfigure}
\begin{subfigure}{.4\textwidth}
 \begin{tikzpicture}[scale=0.7][line cap=round,line join=round,x=1cm,y=1cm]
 \begin{axis}[
 x=1cm,y=1cm,
 axis lines=middle,
 ymajorgrids=true,
 xmajorgrids=true,
 xmin=-3,
 xmax=4,
 ymin=-5.,
 ymax=4,
 xtick={-3,...,4},
 ytick={-5,...,4}]
 
 \draw [line width=2pt, color=red] (1,-2) -- (3, -1);
 \draw [line width=2pt, color=red] (3, -1) -- (2, 3);
 
 \begin{scriptsize}
 \draw [fill=black] (1, -2) circle (2.5pt);
 \draw [fill=black] (3, -1) circle (2.5pt);
 \draw [fill=black] (2, 3) circle (2.5pt);
 
 \draw[color=black] (0.8, -2.4) node {$A'A\vec{v}_0$};
 \draw[color=black] (3.3, -0.9) node {$\vec{v}_3$};
 \draw[color=black] (1.7, 3) node {$\vec{v}_4$};
 
 \draw[color=black] (2.5, -1.7) node {$A'AF_1$};
 \draw[color=black] (2.7, 1.2) node {$F_2$};
 \end{scriptsize}
 \end{axis}
 \end{tikzpicture}
\end{subfigure}
\caption{The situation of subcase (b) in the proof of Proposition~\ref{prop:ESC_same}. In this example,~$E$ and~$E'$ both have lengths equal to their heights; thus,~$A = \left(\begin{smallmatrix}
 1 & -1 \\
 0 & 1
\end{smallmatrix}\right)$ and~$A' = \left(\begin{smallmatrix}
 1 & 0 \\
 1 & 1
\end{smallmatrix}\right)$.}
\label{fig:subcase_b}
\end{figure}
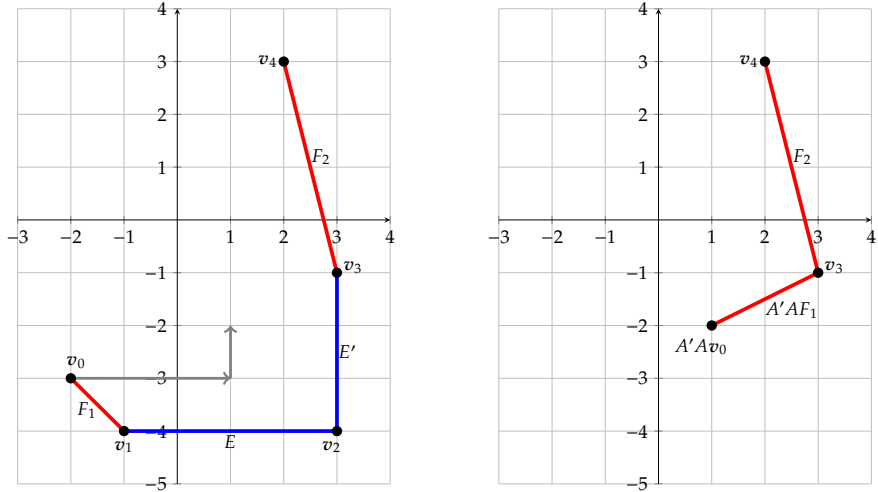

Consider the half space~$\Hc = \setcond{(x,y) \in \NQ}{x \ge y}$.
It is clear that~$\vec{v}_2 \in \Hc$. By assumption of subcase (b), note that~$P$ has at least~$8$ vertices. So, in order to not contradict Lemma~\ref{lemma:one_edge_below}, it follows that at least one of~$\vec{v}_0$ and~$\vec{v}_4$ lies belongs to~$\Hc$. Without loss of generality, we may assume that~$\vec{v}_0, \vec{v}_1, \vec{v}_2, \vec{v}_3 \in \Hc$.

Let~$A$ and~$A'$ be the shears which map~$\vec{v}_1$ to~$\vec{v}_2$ and~$\vec{v}_2$ to~$\vec{v}_3$, respectively. We want to show that~$A'A\vec{v}_0$ satisfies the inequalities~$-h_{F_2} < u_{F_2}(A'A\vec{v}_0) \le h_{F_2}$. Note that~$\vec{v}_0 = \lambda\vec{v}_1 - (\mu,0)$, for some~$0 \le \lambda < 1$ and~$\mu > 0$. Thus,~$A'A\vec{v}_0 = \lambda\vec{v}_3 - (\mu, n'\mu)$, where~$n'$ is the number of primitive singularities of~$E'$. Since both~$u_{F_2}(1,0)$ and~$u_{F_2}(0,1)$ are strictly negative, the first inequality holds. On the other hand,~$A'A\vec{v}_0 = \vec{v}_0 + (x,y)$, for some~$x, y \in \ZZ$. Since~$\vec{v}_0$ lies in~$\Hc$, it follows that~$x, y \ge 0$. Furthermore, by the central symmetry of $P$, we have~$u_{F_2}(\vec{v}_0) \le h_{F_2}$; thus, the second inequality holds.

Thus, we may apply Lemma~\ref{lemma:sym_det_fixed} and transform~$Q$ so that its edges represented by~$H_k$ and~$S_{j+1}$ are~$A'AF_1$ and~$F_2$. But now, we still have~$(-1,0)^t \in h'Q^*$, where~$h'$ is the height of the long edge~$E'$. But by Proposition~\ref{prop:heights_fixed_cs}, the non-origin lattice points in~$h'Q^*$ must be primitive inner normals to long edges; thus, we reach a contradiction.
\end{proof}
We are now ready to prove the main theorem of this section.
\begin{proof}[Proof of Theorem~\ref{thm:sym_unique}]
Let~$P$ and~$Q$ be two symmetric Fano polygons which are mutation-equivalent. By Proposition~\ref{prop:empty_basket}, we may assume that~$P$ and~$Q$ have non-empty basket~$\Bc$. By Proposition~\ref{prop:ESC_same}, we know that their edge data is equal. Finally, we apply Proposition~\ref{prop:one_cs_per_extended_basket} and we may conclude that~$P$ and~$Q$ are isomorphic.
\end{proof}
\section{The behaviour of other K{\"a}hler--Einstein polygons under mutation}\label{section:KE-triangles}
In this section, we aim to complete the proof of Theorem~\ref{thm:main}. Previously, in~\S\ref{section:most_one_cs_fano}, we proved that there is at most one symmetric Fano polygon in each mutation-equivalence class. Equivalently, we proved that if two symmetric Fano polygons are mutation-equivalent, then they are isomorphic. So, in order to complete the proof of Theorem~\ref{thm:main}, we need to prove that (a) if two \KE\ Fano triangles are mutation-equivalent, then they are isomorphic and (b) if a symmetric Fano polygon is mutation-equivalent to a \KE\ triangle, then they are isomorphic. We prove (a) in Proposition~\ref{prop:triangle_triangle}. Since the arguments for centrally symmetric and~$3$-symmetric polygons will be different, we further split the proof of (b) into Proposition~\ref{prop:triangle_cs} and Proposition~\ref{prop:triangle_3sym}.
\begin{proposition}\label{prop:triangle_triangle}
Let~$P, Q \subset \NQ$ be \KE\ Fano triangles. Suppose that~$P$ and~$Q$ are mutation-equivalent. Then~$P$ is isomorphic to~$Q$.
\end{proposition}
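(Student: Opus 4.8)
The plan is to pin down a \KE\ Fano triangle, up to isomorphism, by a short list of mutation invariants --- the Ehrhart series of its dual together with its directed singularity content --- so that two mutation-equivalent such triangles necessarily agree.

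If $P$ has no long edge it admits no mutation, so its mutation-equivalence class is $\set{P}$ and $Q \isom P$; I therefore assume that $P$, and hence also $Q$, has a long edge. By Lemma~\ref{lemma:KE_triangle_verts} I may take $P = \conv\set{(-k, a-1),\, (k, -a),\, (0,1)}$ with $k, a \ge 1$. A one-line determinant computation shows that each of the three triangles spanned by the origin and an edge of $P$ has normalised volume $k$; hence every edge $E$ satisfies $\ell_E h_E = k$, $\Vol(P) = 3k$, and --- using Lemma~\ref{lemma:KE_triangle_weights} --- $X_P$ is a fake weighted projective plane with weights $(1,1,1)$ and index $k$ (the index being $k$ because $[N : N_P] = |\det((-k,a-1),(k,-a))| = k$). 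Its anticanonical degree is then $(-K_{X_P})^2 = \frac{(1+1+1)^2}{k} = \frac 9k$, and since $(-K_{X_P})^2 = \Vol(P^*)$ is recovered from the leading coefficient of the mutation-invariant series $\Ehr_{P^*}(t)$, the value of $k$ --- and so $\Vol(P) = \Vol(Q) = 3k$ --- is the same for $P$ and $Q$.

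Next I would reconstruct the shape of each edge. For an edge $E$ with a non-empty residue, its residual cone has primitive ray generators $(1,0)$ and $(b,r)$, where $\res(H_E) = \left(\begin{smallmatrix} 1 & b \\ 0 & r \end{smallmatrix}\right)$; thus the residual segment has lattice length $\gcd(b-1,r)$ and height $h_E = r/\gcd(b-1,r)$, and then $\ell_E = k/h_E$ together with the number of primitive T-singularities on $E$ follow from $\ell_E h_E = k$. So the directed basket $\dirbasket_P$ --- a mutation invariant --- determines, in cyclic order, the full Hermite normal form of every non-pure edge of $P$. For a pure long edge one has $h_E^2 \mid k$, and the remaining heights are recovered either from the arithmetic forced by the weights $(1,1,1)$, or directly from $\Ehr_{P^*}(t)$: exactly as in Propositions~\ref{prop:heights_fixed_cs} and~\ref{prop:heights_fixed_3_sym}, the vertices of $P^*$ are the points $u_i/h_i$ (with $u_i$ the primitive inner normal and $h_i$ the height of the $i$-th edge), they sum to $\vec 0$, and one checks that the small dilates $mP^*$ contain no lattice points beyond $\vec 0$ and the $u_i$ with $h_i \le m$, so the multiset $\set{h_1, h_2, h_3}$ is visible in the Ehrhart series.

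To finish: fix one edge of $P$ in a standard position by an $\SL_2(\ZZ)$-transformation; since the three vertices of $P$ sum to $\vec 0$, once two consecutive vertices and the Hermite normal form of the next edge are known the third vertex is forced. Hence $P$ is determined up to isomorphism by $k$, $\dirbasket_P$, and the edge-height multiset --- all mutation invariants --- and the same data for $Q$ coincides with that of $P$, so $Q \isom P$. I expect the main difficulty to be the height-reading step in the presence of a pure long edge, where $P$ need not be symmetric and one must rule out unexpected lattice points in small dilates of the (rational) dual triangle; a convenient special case is the reflexive one, where all $h_i = 1$, every edge is long of equal height, and Lemma~\ref{lemma:same_height_KE} gives $X_P \in \set{\PP^2,\, \PP^2/\ZZ_3}$ outright, the two being told apart by $k$.
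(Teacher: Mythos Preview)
Your strategy---recover one full edge Hermite normal form from mutation invariants, then use the sum-to-zero relation $\vec v_0+\vec v_1+\vec v_2=\vec 0$ to pin down the third vertex---is different from the paper's and works cleanly when the directed basket is non-empty. In that case a residue $\res(H_E)=\left(\begin{smallmatrix}1&b\\0&r'\end{smallmatrix}\right)$ does determine $h_E=r'/\gcd(b-1,r')$, hence $\ell_E=k/h_E$, and one checks (using that the residue endpoint $(1+\tilde\ell d,\tilde\ell h)$ stays primitive because $v_1$ is) that the full $a$ in $H_E=\left(\begin{smallmatrix}1&a\\0&k\end{smallmatrix}\right)$ is recovered from $d\equiv (b-1)/\tilde\ell\pmod h$. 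So one edge of $P$ and one of $Q$ share the same $H_E$, and the sum-to-zero relation finishes it.

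The genuine gap is the empty-basket case, which you flag but do not close. Your proposed height-reading from $\Ehr_{P^*}$ fails as stated: take the index-$9$ triangle $P'$ of Example~\ref{eg:nonsym_KE_tri}, whose edges are all pure with heights $\{1,3,3\}$. One computes $|P'^*\cap M|=2$, $|2P'^*\cap M|=4$, $|3P'^*\cap M|=7$, so the dilates pick up points such as $(0,-2),(0,-3),(0,1)$ beyond the primitive normals, and the jumps do not read off the height multiset. Worse, even if you knew the heights, a pure edge has $H_E=\left(\begin{smallmatrix}1&a\\0&k\end{smallmatrix}\right)$ with $a$ not determined by $h_E$ alone, so your reconstruction step has nothing to anchor on. The reflexive special case you mention covers only $k\in\{1,3\}$.

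The paper's proof avoids all of this by working directly with the parametrisation $(k,a)$: after matching $k$ via $\Vol(P^*)$, it sets up the congruences $a\equiv -1\pmod{\ell_0}$, $a\equiv 2\pmod{\ell_1}$, $2a\equiv 1\pmod{\ell_2}$, observes that any common factor of two $\ell_i$ divides $3$, and then does a short case split on $\gcd(\ell_0,\ell_1)\in\{1,3\}$ (using Lemma~\ref{lemma:KE_odd} to rule out even indices) to force $a$ uniquely $\bmod\ k$. This argument is uniform in whether the basket is empty, and is what you need to replace your Ehrhart step with.
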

Before we do the proof of Proposition~\ref{prop:triangle_triangle}, we require the following lemma.
\begin{lemma}\label{lemma:KE_odd}
Let~$P \subset \NQ$ be a \KE\ Fano triangle of index~$k$. Then~$k$ is odd.
\end{lemma}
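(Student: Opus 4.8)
Let $P \subset \NQ$ be a \KE\ Fano triangle of index $k$. Then $k$ is odd.

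The plan is to use the explicit description of \KE\ Fano triangles from Lemma~\ref{lemma:KE_triangle_verts}, namely that $P$ is isomorphic to the triangle with vertices $\vec{v}_1 = (-k', a-1)$, $\vec{v}_2 = (k', -a)$, and $\vec{v}_3 = (0,1)$ for some integers $k', a \ge 1$ with $\gcd(k',a) = \gcd(k',a-1) = 1$. First I would check that the parameter $k'$ appearing here really is the index $k$ of the polygon: the index is the order of the group $G = N / N_P$, where $N_P$ is the sublattice generated by the vertices. Computing the determinant of the matrix whose columns are $\vec{v}_1 - \vec{v}_3$ and $\vec{v}_2 - \vec{v}_3$ (equivalently, using that the vertices sum to zero and $\vec{v}_3 = (0,1)$ is primitive), one finds that the index of the sublattice $N_P \subseteq N$ equals $k'$, so $k = k'$. (One should be a little careful here: strictly the index is $[N : N_P]$, and since $\vec{v}_3 = (0,1)$ we may compute this as $|\det(\vec{v}_1, \vec{v}_2)|$ reduced appropriately; I would verify $|\det(\vec{v}_1,\vec{v}_2)| = |{-k'\cdot(-a)} - (a-1)k'| = |k'a - k'a + k'| = k'$.)

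With $k = k'$ established, the core of the argument is an elementary parity computation. Suppose for contradiction that $k$ is even. Look at the vertices modulo $2$: we have $\vec{v}_3 = (0,1) \equiv (0,1)$, $\vec{v}_1 = (-k, a-1) \equiv (0, a-1)$, and $\vec{v}_2 = (k,-a) \equiv (0, a)$, all mod $2$. Now the coprimality conditions $\gcd(k, a) = \gcd(k, a-1) = 1$ together with $k$ even force $a$ to be odd \emph{and} $a - 1$ to be odd, which is impossible since $a$ and $a-1$ have opposite parities. Hence $k$ cannot be even.

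The argument is essentially a one-line parity observation once the identification $k = k'$ is in place, so I expect no serious obstacle. The only point requiring a little care is the bookkeeping in that identification step: one must confirm that the "$k$" in the statement of the lemma is indeed the same integer as the "$k$" in the normal form of Lemma~\ref{lemma:KE_triangle_verts}, rather than, say, a divisor of it or a multiple. This follows from the determinant computation above, but it is worth spelling out explicitly (and noting that the sublattice $N_P$ is generated by $\vec{v}_1, \vec{v}_2, \vec{v}_3$, of which $\vec{v}_3$ and $\vec{v}_3 + \vec{v}_2$ already generate a rank-one direction, so $[N:N_P]$ is governed by how $\vec{v}_2 - \vec{v}_3 = (k, -a-1)$ sits relative to $(0,1)$, giving index $k$). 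The remaining deduction is then immediate.
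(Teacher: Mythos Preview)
Your argument is correct and essentially identical to the paper's: both invoke Lemma~\ref{lemma:KE_triangle_verts} and observe that if $k$ is even then one of the two vertices $(-k,a-1)$, $(k,-a)$ fails to be primitive (the paper phrases this via primitivity directly, you via the equivalent coprimality conditions $\gcd(k,a)=\gcd(k,a-1)=1$). Your extra care in verifying that the normal-form parameter $k'$ agrees with the index $[N:N_P]$ is a step the paper silently assumes, so your version is if anything slightly more complete.
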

\begin{proof}
By Lemma~\ref{lemma:KE_triangle_verts}, we may apply an appropriate transformation so that~$P$ has vertices~$(-k,a-1)$, $(k,-a)$, and~$(0,1)$, for some integers~$a$ and~$k$. Suppose towards a contradiction that~$k$ is even. Now, since~$2$ divides either~$a-1$ or~$a$, it follows that either~$(-k, a-1)$ or~$(k, -a)$ is not primitive. So,~$P$ has a non-primitive vertex, which contradicts that~$P$ is Fano.
\end{proof}
We may now proceed with the proof.
\begin{proof}[Proof of Proposition~\ref{prop:triangle_triangle}]
By Lemma~\ref{lemma:KE_triangle_verts},~$P$ is isomorphic to a triangle with primitive vertices~$(-k, a-1)$, $(k,-a)$, and~$(0,1)$, for some integers~$a,k \ge 1$. Similarly,~$Q$ is isomorphic to a triangle with primitive vertices~$(-k', a'-1)$,~$(k',-a')$, and~$(0,1)$, for some integers~$a',k' \ge 1$. The dual polygons~$P^*$ and~$Q^*$ have normalised volumes~$9/k$ and~$9/k'$, respectively. Since~$P$ and~$Q$ are mutation-equivalent, these volumes coincide; thus,~$k=k'$. It now remains to show that~$a \congr a' \mod k$.

Label the edges of~$P$ as~$E_0, E_1, E_2$. We can write~$k = \ell_i h_i$, where~$\ell_i$ is the length of~$E_i$ and~$h_i$ is its height. We obtain the following system of linear congruences:
\begin{equation}\label{eq:ke_triangle_congr}
a \congr -1 \mod \ell_0, \ \ \ \ \ \ \
a \congr 2 \mod \ell_1, \ \ \ \ \ \ \
2a \congr 1 \mod \ell_2.
\end{equation}
Thus, if~$m$ divides two of the lengths~$\ell_i$ and~$\ell_j$ for~$i \neq j$, then~$m$ must divide~$3$.

If~$P$ does not have at least two long edges, then~$Q$ must be isomorphic to~$P$. So, we assume otherwise. Thus, without loss of generality,~$E_0$ and~$E_1$ are long. This implies that~$\ell_0, \ell_1 \geq \sqrt{k}$.

If~$\gcd(\ell_0, \ell_1) = 1$, then~$\ell_0 \ell_1$ divides~$k$. Since~$\ell_0 \ell_1 \ge k$, we obtain~$\ell_0 = \sqrt{k} = \ell_1$. Thus,~$k=1$. But now, there is only one such triangle with index~$k=1$, so~$P$ and~$Q$ are isomorphic; in particular,~$X_P = X_Q = \PP^2$.

Now consider the case~$\gcd(\ell_0, \ell_1) = 3$. Here, we have~$\ell_0 = 3\widehat{\ell}_0$ and~$\ell_1 = 3\widehat{\ell}_1$, where~$\widehat{\ell}_0$ and~$\widehat{\ell}_1$ are coprime. So,~$k = 3\widehat{\ell}_0 \widehat{\ell}_1 \widehat{k} \le 9\widehat{\ell}_0 \widehat{\ell}_1$. So,~$\widehat{k} \leq 3$. By Lemma~\ref{lemma:KE_odd}, we have that~$\widehat{k} \neq 2$.
This leaves two possibilities for~$\widehat{k}$.

If~$\widehat{k}=3$, then~$\ell_0 = \ell_1 = \sqrt{k}$. Thus,~$\widehat{\ell_0} = \widehat{\ell_1} = 1$,~$\ell_0 = \ell_1 = 3$, and~$k=9$. Up to isomorphism, there is only one Fano triangle with weights~$(1,1,1)$ and index~$k=9$. In fact, this is the triangle $P'$ appearing in Example~\ref{eg:nonsym_KE_tri}. Therefore,~$P$ and~$Q$ must be isomorphic.

If~$\widehat{k} = 1$, then~$k=3\widehat{\ell}_0 \widehat{\ell}_1$. Since~$\widehat{\ell}_0$ and~$\widehat{\ell}_1$ are coprime, we may assume without loss of generality that~$\widehat{\ell}_1$ is not divisible by~$3$. Now, consider the system~\eqref{eq:ke_triangle_congr}. We obtain~$a \congr -1 \mod 3\widehat{\ell}_0$ and~$a \congr 2 \mod \widehat{\ell_1}$. Since~$3\widehat{\ell}_0$ and~$\widehat{\ell}_1$ are coprime, we may apply the Chinese Remainder theorem. We obtain a unique solution for~$a$ modulo~$k$, which determines~$P$ and~$Q$ up to isomorphism. Thus,~$P$ and~$Q$ are isomorphic.
\end{proof}
Now that we have shown the triangle/triangle case, it remains to show the triangle/symmetric case. It will be useful to have the following lemma as it allows us to verify the case when the basket of R-singularities is empty.
\begin{lemma}\label{lemma:KE_triangle_minimal}
Let~$P \subset \NQ$ be a \KE\ Fano triangle. Then~$P$ is minimal.
\end{lemma}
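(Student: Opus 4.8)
The plan is to use the characterisation of minimality from Lemma~\ref{lemma:min_poly_condition}: a Fano polygon~$P$ is minimal if and only if, for every long edge~$E$ of~$P$ with primitive inner normal~$u_E$, we have~$|\hmin| \le \hmax$ where the heights are taken with respect to~$u_E$. Concretely, writing~$h_E$ for the height of~$E$, this says that there must exist a point~$\vec{x} \in P$ with~$u_E(\vec{x}) \ge h_E$. So the strategy is: normalise~$P$ using Lemma~\ref{lemma:KE_triangle_verts} so that its vertices are~$(-k,a-1)$,~$(k,-a)$, and~$(0,1)$ for integers~$k,a \ge 1$ with~$\gcd(k,a)=\gcd(k,a-1)=1$; pick an arbitrary long edge~$E$; and exhibit a vertex of~$P$ whose pairing with~$u_E$ is at least~$h_E$.

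First I would dispose of the case where~$P$ has no long edges, in which case minimality is immediate. Otherwise, let~$E$ be a long edge. I would run through the three edges of the normalised triangle. For each edge~$E$, the triangle with base~$E$ and peak~$\vec{0}$ has normalised volume~$k$ and also volume~$\ell_E h_E$, so~$\ell_E h_E = k$; in particular~$h_E \le k$, with equality only if~$\ell_E = 1$, in which case~$E$ cannot be long unless~$h_E = 1$ as well. The key computation is then to evaluate~$u_E$ at the opposite vertex. For instance, if~$E$ is the edge joining~$(-k,a-1)$ and~$(k,-a)$, its primitive inner normal~$u_E$ satisfies~$u_E(-k,a-1) = u_E(k,-a) = -h_E$, and evaluating at the third vertex~$(0,1)$ gives~$u_E(0,1)$. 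Since the three vertices sum to~$\vec{0}$, we have~$u_E(0,1) = -u_E(-k,a-1) - u_E(k,-a) = 2h_E \ge h_E$, so the condition of Lemma~\ref{lemma:min_poly_condition} holds for this edge. The same argument, using only that the three vertices sum to zero (which is the content of the \KE\ condition via Lemma~\ref{lemma:KE_triangle_weights}), works verbatim for the other two edges: for any edge~$E$, if the two vertices on~$E$ pair to~$-h_E$ under~$u_E$, then the opposite vertex pairs to~$2h_E$.

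The heart of the argument is thus the observation that a \KE\ triangle has weights~$(1,1,1)$ (Lemma~\ref{lemma:KE_triangle_weights}), so the sum of its vertices is the origin, and hence the vertex opposite any edge~$E$ lies at height~$2h_E$ above~$E$ — which is more than enough for minimality, regardless of which edge is long. I do not anticipate a serious obstacle; the only point requiring a little care is confirming that~$u_E$ is genuinely the \emph{primitive} inner normal so that~$h_E$ is the height as defined (not a multiple of it), but this is automatic since~$E$ is an edge of a lattice polygon and~$u_E$ is taken primitive by definition. Since the choice of long edge~$E$ was arbitrary, Lemma~\ref{lemma:min_poly_condition} gives that~$P$ is minimal.
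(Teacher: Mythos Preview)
Your proposal is correct and takes essentially the same approach as the paper: both use that the vertices sum to~$\vec{0}$ (weights~$(1,1,1)$, Lemma~\ref{lemma:KE_triangle_weights}) to conclude that the vertex opposite any edge~$E$ sits at height~$2h_E$, which verifies the minimality criterion of Lemma~\ref{lemma:min_poly_condition}. The paper packages this slightly more cleanly by working with the dual vertices~$u_i \in P^*$ (so~$u_i(\vec{v}_j) = -1$ for~$i \neq j$) and reading off~$u_i(\vec{v}_i) = 2$ directly, without the normalisation via Lemma~\ref{lemma:KE_triangle_verts} or the~$\ell_E h_E = k$ digression, but the substance is identical.
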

\begin{proof}
By Lemma~\ref{lemma:KE_triangle_weights},~$P$ has weights~$(1,1,1)$. Let~$\vec{v}_0, \vec{v}_1, \vec{v}_2$ be the vertices of~$P$ and~$u_0, u_1, u_2$ be the vertices of~$P^*$, ordered so that~$u_i(\vec{v}_j) = -1$ if and only if~$i \neq j$. In order to show minimality of~$P$, it's enough to show that~$u_i(\vec{v}_i) \ge 1$, for all~$i=0,1,2$. To do this, we simply rearrange the identity~$u_i(\vec{v}_0 + \vec{v}_1 + \vec{v}_2) = 0$. We obtain~$u_i(\vec{v}_i) = 2 \ge 1$, and so we are done.
\end{proof}
We can now prove the remaining two propositions.
\begin{proposition}\label{prop:triangle_cs}
No \KE\ Fano triangle is mutation-equivalent to a centrally symmetric Fano polygon.
\end{proposition}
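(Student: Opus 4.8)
The plan is to argue by contradiction: assume a \KE\ Fano triangle $P$ is mutation-equivalent to a centrally symmetric Fano polygon $Q$. First I would record some basic facts. A triangle has an odd number of vertices and so is never centrally symmetric; hence $P\not\isom Q$, and therefore $P$ has a long edge (otherwise its mutation-equivalence class is $\set{P}$). Since $P$ is \KE, it has weights $(1,1,1)$ by Lemma~\ref{lemma:KE_triangle_weights}, so the three triangles obtained by coning an edge of $P$ to $\vec 0$ all have the same normalised volume $k$, the index of $P$; thus $\ell_E h_E=k$ for every edge $E$. By Lemma~\ref{lemma:KE_odd} the index $k$ is odd, so every $\ell_E$ and every $h_E$ is odd. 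Finally, since $Q$ is centrally symmetric, so is $Q^*$, so $|kQ^*\cap M|$ is odd for every $k\ge 0$; equivalently, every coefficient of $\Ehr_{Q^*}(t)$ is odd.

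I would then split into two cases. If $P$ (equivalently $Q$) has empty basket, every edge $E$ of $P$ is pure, so $n_E=\ell_E/h_E$ is a quotient of two odd integers and hence odd; thus $n_P=\sum_E n_E$ is odd. But $Q$ is a minimal (Lemma~\ref{lemma:P_minimal}) centrally symmetric Fano polygon with empty basket, so by the classification underlying Proposition~\ref{prop:empty_basket} (Figure~\ref{fig:empty_basket_syms}) it has $n_Q\in\set{4,6,8,10}$, which is even --- contradicting the mutation-invariance of the number of primitive T-singularities.

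For the non-empty basket case, the first step is to show that $P$ has a \emph{unique} edge of minimal height. If two edges shared the minimal height, they would have equal length $L$, and, as in the proof of Proposition~\ref{prop:triangle_triangle}, a common divisor of two edge-lengths of a \KE\ triangle divides $3$, so $L\in\set{1,3}$. If $L=1$ then all three edges have length $1$ and height $k$, forcing $k=1$ (so $P\isom\PP^2$, with empty basket) or forcing $P$ to have no long edge --- both impossible. If $L=3$ then $3\mid k$, and a short computation with Lemma~\ref{lemma:KE_triangle_verts} shows that then every edge of $P$ has length divisible by $3$; so the third edge also has length $3$, all edges have length $3$ and height $k/3$, and again we are forced to $k/3>3$ (no long edge) or $k\in\set{3,9}$ (so $P$ is $\PP^2/\ZZ_3$ or the index-$9$ \KE\ triangle of Example~\ref{eg:nonsym_KE_tri}, both with empty basket) --- all impossible. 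Hence $P$ has a unique minimal-height edge $E_0$, of height $h_0$ and primitive inner normal $u_0$. Next I would compute $h_0 P^*\cap M=\set{\vec 0,u_0}$: since $P$ is \KE, $P^*$ is a triangle with barycentre $\vec 0$ whose vertex dual to $E_0$ is $\frac1{h_0}u_0$, and the weights $(1,1,1)$ give $\max_{\vec x\in P}u_0(\vec x)=2h_0$, so $u_0\in h_0 P^*$ while $-u_0\notin h_0 P^*$; combining this with the uniqueness of $E_0$, a region-exclusion argument in the style of Propositions~\ref{prop:heights_fixed_cs} and~\ref{prop:heights_fixed_3_sym} rules out every other lattice point. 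Then the coefficient of $t^{h_0}$ in $\Ehr_{P^*}(t)$ is $2$; but $\Ehr_{P^*}(t)=\Ehr_{Q^*}(t)$, since the Ehrhart series of the dual is a mutation-invariant, and all coefficients of $\Ehr_{Q^*}(t)$ are odd --- a contradiction.

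The hard part is the identity $h_0 P^*\cap M=\set{\vec 0,u_0}$. Excluding $u_1,u_2$ (which enter $kP^*$ only at the larger levels $h_1,h_2$) and $-u_0$ (which enters at level $2h_0$) is immediate, but ruling out the remaining lattice points of $h_0 P^*$ requires the same careful exclusion of translated cones as in the proofs of Propositions~\ref{prop:heights_fixed_cs} and~\ref{prop:heights_fixed_3_sym}, now adapted to the asymmetry of the dual of a \KE\ triangle; it is probably cleanest to isolate this --- together with the description of $kP^*\cap M$ for $k<h_0$ --- as a separate proposition.
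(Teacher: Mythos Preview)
Your empty-basket case is correct, and in fact cleaner than the paper's (which appeals to the full classification of minimal polygons with $\Bc=\varnothing$): the observation that $k$ odd forces every $n_E$ odd, hence $n_P$ odd, while central symmetry forces $n_Q$ even, is all that is needed.

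The non-empty basket case, however, has a genuine gap. Your key claim $h_0 P^*\cap M=\{0,u_0\}$ is false as stated. Take the \KE\ triangle with $k=35$, $a=4$ (vertices $(0,1)$, $(-35,3)$, $(35,-4)$): its edge heights are $5,7,35$, so $h_0=5$ is the unique minimum, yet one checks directly that $5P^*\cap M=\{(0,0),(0,1),(0,-1),(1,10)\}$ has four elements, not two. (This triangle has three R-singularities, so it cannot be mutation-equivalent to a centrally symmetric polygon; but your argument as written does not use that restriction, and you acknowledge the lattice-point count as ``the hard part'' without actually establishing it.) So the region-exclusion argument you sketch does not go through in the generality you claim, and would need the extra hypothesis that the two non-minimal edges share the same height --- which does follow from the assumed mutation-equivalence, but then still has to be proved.

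There is a far shorter route that stays entirely within your own Case~1 toolkit and is essentially what the paper does. Since the basket is a mutation-invariant and $Q$ is centrally symmetric, the basket has an even number of R-singularities, each occurring with even multiplicity; as $P$ is a triangle this forces exactly two R-singularities of the \emph{same} type. Hence two edges $F_1,F_2$ of $P$ share the same height, and the third edge $F_0$ is pure. From $\ell_{F_0}h_{F_0}=k$ odd you get, exactly as in your Case~1, that $n_{F_0}=\ell_{F_0}/h_{F_0}$ is odd; and $F_1,F_2$, having equal height and equal determinant $k$, contribute equal numbers $m$ of primitive T-singularities. Thus $n_P=n_{F_0}+2m$ is odd, contradicting $n_Q$ even. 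No Ehrhart computation is needed.
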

\begin{proof}
Let~$P$ be a centrally symmetric Fano polygon and~$Q$ be a \KE\ triangle. Suppose towards a contradiction that~$P$ and~$Q$ are mutation-equivalent. Due to the central symmetry of~$P$, the number of R-singularities in the basket must be even. Since~$Q$ has three edges, there are at most three R-singularities. Thus, there are either zero or two R-singularities in the basket.

First suppose that there are no R-singularities in the basket. By Lemma~\ref{lemma:KE_triangle_minimal}, the triangle~$Q$ will be minimal. Thus, the triangle~$Q$ appears in the list of~\cite{minimality}. The centrally symmetric~$P$ is also minimal and appears in that list. We see that there are no examples in the same mutation-equivalence class.

Otherwise, there are two copies of the same R-singularity in~$\Bc$. Consider the edges of the triangle~$Q$. Two of the edges~$F_1, F_2$ host R-singularities and one edge~$F_0$ is a pure T-singularity. It follows from Lemma~\ref{lemma:KE_triangle_weights} that the determinant of each edge is~$k$, for some positive integer $k$. So, the edges~$F_1$ and~$F_2$ have the same number~$m \ge 0$ of primitive T-singularities. Now, since~$P$ is centrally symmetric, it has an even number~$2n$ of primitive T-singularities. Thus, the pure edge~$F_0$ of~$Q$ has~$2n - 2m$ primitive T-singularities. Further, its determinant~$k$ must now be divisible by~$2$. But this contradicts Lemma~\ref{lemma:KE_odd}.
\end{proof}
\begin{proposition}\label{prop:triangle_3sym}
Let~$P \subset \NQ$ be a \KE\ Fano triangle and~$Q \subset \NQ$ be a~$3$-symmetric Fano polygon. Suppose that~$P$ is mutation-equivalent to~$Q$. Then~$P \isom Q$.
\end{proposition}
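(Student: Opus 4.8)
The plan is to control the \emph{directed basket}. Since $P$ and $Q$ are mutation-equivalent they have the same directed singularity content, so $\dirbasket_P = \dirbasket_Q$ and $n_P = n_Q$. As $Q$ is $3$-symmetric, Remark~\ref{remark:times_notation} writes $\dirbasket_Q = \Cc \times 3$, so $|\dirbasket_P| = |\dirbasket_Q|$ is a multiple of~$3$; as $P$ is a triangle, $|\dirbasket_P| \le 3$. Hence either the basket is empty, or $\dirbasket_P = \dirbasket_Q = \set{S}\times 3$ for a single Hermite normal form~$S$.

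In the empty-basket case, $P$ and $Q$ are minimal by Lemmas~\ref{lemma:KE_triangle_minimal} and~\ref{lemma:P_minimal}, so I would argue from the classification underlying~\cite{minimality}: the $3$-symmetric Fano polygons with empty basket are $\PP^2$, $\PP^2/\ZZ_3$ and the hexagon of Figure~\ref{fig:empty_basket_syms}, whose numbers of primitive T-singularities are $3$, $9$ and~$6$. As $n_P = n_Q$, it then remains to observe that no \KE\ triangle with empty basket has $n = 6$, and that a \KE\ triangle with empty basket and $n \in \set{3,9}$ has all edges of equal height, so by Lemma~\ref{lemma:same_height_KE} it is $\PP^2$ or $\PP^2/\ZZ_3$, and in either case it is isomorphic to~$Q$. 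Both facts follow from the identity $\ell_i h_i = k$ for the $i$-th edge together with $h_i \mid \ell_i$ (empty basket) and $k$ odd (Lemma~\ref{lemma:KE_odd}).

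Now suppose the basket is non-empty, so $\dirbasket_P = \dirbasket_Q = \set{S}\times 3$. If $n_P = 0$ then $P$ has no long edge, so it is alone in its mutation-equivalence class and $Q = P$, a triangle. Otherwise $n_P = n_Q \ge 1$, so $Q$ has a long edge and hence a $G$-orbit $E, GE, G^2E$ of long edges for some order-$3$ element $G \in \Aut(Q)$. By Corollary~\ref{cor:num_edges_3sym} these are \emph{all} the long edges of~$Q$, they share a common height $h$ (which is $\ge 2$ by Lemma~\ref{lemma:height_bound_3sym}), and every other edge of~$Q$ has height $> h$. Combining this with Remark~\ref{remark:times_notation} and $|\dirbasket_Q| = 3$, the edge data of~$Q$ has the form $\set{F_0, F_1, \dots, F_j}\times 3$, where $F_0$ carries the residue~$S$ and $F_1,\dots,F_j$ are pure long edges; so $Q$ has three edges of residue~$S$ and $3j$ pure long edges. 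The aim is to show $j = 0$: then $Q$ is a $3$-symmetric triangle, hence has weights $(1,1,1)$ and is \KE\ by Lemma~\ref{lemma:KE_triangle_weights}, and Proposition~\ref{prop:triangle_triangle} gives $P \isom Q$.

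The main obstacle is ruling out $j \ge 1$. Corollary~\ref{cor:num_edges_3sym} already forces $j = 1$ and forces the three residue-$S$ edges to be short, so the only remaining possibility is that $Q$ is a $3$-symmetric hexagon with three pure long edges (inner normals forming one $G$-orbit, common height $h$) and three short edges of height $> h$. To exclude this I would use Proposition~\ref{prop:heights_fixed_3_sym} to record $|kQ^*\cap M|$ for $k \le h$ and transfer it to $P$ through mutation-invariance of~$\Ehr_{P^*}$: from $|kP^*\cap M| = 1$ for $k < h$ one gets that the primitive inner normal of every edge of~$P$ lies at level $\ge h$ in~$P^*$, and then the relation that the three vertices of the barycentre-zero triangle~$P^*$ sum to zero, together with $|hP^*\cap M| = 4$, forces all three edges of~$P$ to have height exactly~$h$ with inner normals forming a $G$-orbit. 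But then $P$ equals the triangle cut out by those three supporting half-spaces, which by Lemma~\ref{lemma:ord3_det} is $\PP^2$ or $\PP^2/\ZZ_3$ — reflexive, hence of empty basket, contradicting the standing assumption. Assembling the cases then completes the proof.
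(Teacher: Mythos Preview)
Your empty-basket case is fine and a bit more self-contained than the paper's (which simply cites the classification from~\cite{minimality}); the parity trick with $k$ odd works. The non-empty basket case, however, has a genuine gap in the exclusion of $j=1$.

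You correctly reach $\dirbasket_P=\{S\}\times 3$, so each of the three edges of the triangle $P$ carries residue $S$ and therefore has height equal to the height $h_S$ of $S$. In the $j=1$ scenario you yourself note that the three short edges of $Q$ have height $>h$, so $h_S>h$. But your Ehrhart-transfer step asserts that $|kP^*\cap M|=1$ for $k<h$ together with $|hP^*\cap M|=4$ and $\sum w_i=0$ forces all edge-heights of $P$ to equal $h$; that is not established, and in fact it contradicts $h_i=h_S>h$ which you already know from the basket. Knowing only the Ehrhart counts does not pin down which lattice points appear in $hP^*$, so you cannot conclude that the three non-zero points are the primitive edge-normals of $P$, nor that they form a $G$-orbit; invoking Lemma~\ref{lemma:ord3_det} then has no footing, since that lemma needs a $3$-symmetric polygon and you have not produced an order-$3$ action on $P$.

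The fix is the paper's direct route, which you already have the ingredients for: from $\dirbasket_P=\{S\}\times 3$ every edge of $P$ has height $h_S$; since $n_P=n_Q$ is a multiple of $3$ (by $3$-symmetry of $Q$) and equals $3n_i$ for the common per-edge count $n_i$, either $n_P=0$ (then $P$ is alone in its class and $P\cong Q$) or each edge of $P$ is long. In the latter case Lemma~\ref{lemma:same_height_KE} applies directly to $P$ and yields $\PP^2$ or $\PP^2/\ZZ_3$, contradicting $\Bc\neq\varnothing$. This replaces your Ehrhart detour entirely and closes the argument.
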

\begin{proof}
First, we know that~$P$ and~$Q$ are minimal. So, if~$\Bc = \varnothing$, then~$P$ and~$Q$ must appear in the list of minimal polygons in Theorem 5.4 of~\cite{minimality}. We see that there is at most one \KE\ triangle or~$3$-symmetric polygon for each number~$n$ of primitive T-singularities, which is a mutation-invariant. Therefore,~$P \isom Q$.

Now, we have~$\Bc \neq \varnothing$. The number of R-singularities of~$P$ is a multiple of~$3$. Further, the number of R-singularities of~$Q$ is at most~$3$. Since~$P$ and~$Q$ are mutation-equivalent, they have the same number of R-singularities, which must be~$3$. Because an edge can have at most one R-singularity and~$Q$ is a triangle, each of its edges has an R-singularity. Due to the symmetry of order~$3$ on~$P$, these three R-singularities must be the same; in particular, they have the same height~$h$. Therefore, each edge of~$Q$ has the same height~$h$. Since each edge of~$Q$ must also be long, we may apply Lemma~\ref{lemma:same_height_KE}. So, we must have~$X_Q = \PP^2$ or~$X_Q = \PP^2 / \ZZ_3$. In either case,~$Q$ now has no R-singularities, which is a contradiction.
\end{proof}
Now we simply put all these results together to derive the main theorem.
\begin{proof}[Proof of Theorem~\ref{thm:main}]
The result straightforwardly follows from Theorem~\ref{thm:sym_unique}, Proposition~\ref{prop:triangle_triangle}, Proposition~\ref{prop:triangle_cs}, and Proposition~\ref{prop:triangle_3sym}.
\end{proof}
\section{Discussion of K{\"a}hler--Einstein polygons}\label{section:KE_polygons}\label{section:countereg}
Given Conjecture~\ref{conj:hk}, it would follow that there is at most one \KE\ Fano polygon in each mutation-equivalence class. However, in the first part of this section, we provide a counterexample to the conjecture. In particular, we show the existence of a \KE\ Fano polygon which is neither symmetric nor a triangle. This is enough to disprove the conjecture.

The second part of this section is dedicated to describing all \KE\ Fano quadrilaterals. In particular, we describe how quadrilaterals with barycentre as the origin must be constrained.

In the final part of this section, we construct a family of \KE Fano hexagons which are not minimal.
\subsection{A K{\"a}hler--Einstein Fano quadrilateral}\label{sec:proof_of_countereg}
So, as stated above, we will prove that the quadrilateral in Proposition~\ref{prop:countereg} is indeed \KE\ and non-symmetric.
\begin{proof}[Proof of Proposition~\ref{prop:countereg}]
Since all of the vertices of~$P$ are primitive lattice points and the origin is contained in its strict interior,~$P$ is Fano. Using Magma~\cite{magma}, we find that~$P$ has a trivial automorphism group; thus,~$P$ is not symmetric. As~$P$ is clearly not a triangle, it only remains to show that the barycentre of~$P^*$ is the origin.

Again using Magma, we compute the dual~$P^*$ of~$P$.
\[
P^* = \conv\set{\frac{1}{3149}(1,-28)^t, \frac{1}{1739}(151,2)^t, \frac{1}{481}(-31,4)^t, \frac{1}{871}(-43,-2)^t} \subset \MQ.
\]
Label its vertices as~$u_1, u_2, u_3, u_4$, respectively. Note that these are adjacent and given in anticlockwise order. To compute the barycentre, we subdivide~$P^*$ into two triangles~$T_1 = \conv\set{u_1, u_2, u_3}$ and~$T_2 = \conv\set{u_3, u_4, u_1}$. Let~$b_1$ and~$b_2$ be the barycentres of~$T_1$ and~$T_2$, respectively. Then,~$b_1 = \frac{1}{1514669}(11461, 290)^t$ and~$b_2 = \frac{1}{1514669}(-57305, -1450)^t$. The volumes of~$T_1$ and~$T_2$ are~$\frac{3240}{1514669}$ and~$\frac{648}{1514669}$, respectively. Thus, the barycentre of~$P^*$ is~$\frac{1}{\Vol(P^*)}(\Vol(T_1) b_1 + \Vol(T_2) b_2) = (0,0)^t$.

So,~$P$ is a \KE\ Fano polygon which is not symmetric and not a triangle. Its existence thus disproves Conjecture~\ref{conj:hk}.
\end{proof}
This means that it is still a wide-open question whether there is at most one \KE\ polygon per mutation-equivalence class.
\begin{remark}
We can ask whether the polygon~$P$ in the proof of Proposition~\ref{prop:countereg} is mutation-equivalent to any other \KE\ polygon. Since the edges of~$P$ have lengths 1, 2, 7, 2 and heights 3149, 871, 481, 1739, respectively, it follows that none of its edges are long. Thus, the mutation-equivalence class consists only of~$P$. So, in this case,~$P$ is not mutation-equivalent to any other other \KE\ polygon.
\end{remark}
Having this example of a non-symmetric \KE\ quadrilateral allows us to test questions regarding barycentric transformations. These transformations were introduced in~\cite{barycentric_transformations} as a way to measure how close a polytope is to being \KE. We recall the definition, specialised to dimension two.
\begin{definition}[\!{\cite[Lemma~2.2]{barycentric_transformations}}]
Let~$P$ be a Fano polygon with vertices~$\vec{v}_1, \vec{v}_2, \ldots, \vec{v}_m$ written in anticlockwise order. Then the~\emph{barycentric transformation}~$B(P)$ of~$P$ is defined as
\[
B(P) \coloneqq \conv\set{\frac{\vec{v}_{12}}{\gcd(\vec{v}_{12})}, \frac{\vec{v}_{23}}{\gcd(\vec{v}_{23})}, \ldots, \frac{\vec{v}_{m1}}{\gcd(\vec{v}_{m1})}},
\]
where~$\vec{v}_{ij} = \vec{v}_i + \vec{v}_j$. We also write~$B^1(P) \coloneqq B(P)$ and~$B^{k+1}(P) \coloneqq B^k(B(P))$, for integer~$k \ge 1$.
\end{definition}
Note that the vertices of~$B(P)$ are guaranteed to be primitive. However, it isn't certain that the origin is contained in the strict interior of~$B(P)$. So, the barycentric transformation of a Fano polygon might not be Fano itself.
\begin{definition}[\!{\cite[Definition~2.4]{barycentric_transformations}}]
A Fano polygon~$P$ is said to be of~\emph{type~$B_k$} if~$B^k(P)$ is also Fano. $P$ is of~\emph{strict} type~$B_k$ if~$P$ is of type~$B_k$ but not of type~$B_{k+1}$. $P$ is of \emph{type~$B_\infty$} if~$B^k(P)$ is Fano for all integer~$k \ge 1$.
\end{definition}
In~\cite{barycentric_transformations}, they expect that all smooth \KE\ (Fano) polytopes are of type~$B_\infty$. They show that this expectation holds in all dimensions less than~$9$, except for possibly one four-dimensional polytope~\cite[Theorem~1.3]{barycentric_transformations}. In two dimensions, they drop the smooth condition and prove that all \KE\ polygons are of type~$B_1$~\cite[Theorem~1.6]{barycentric_transformations}. They question whether \KE\ Fano polygons are all of type~$B_\infty$. If so, they also question whether the \KE\ property is preserved under barycentric transformation. We answer both questions in the negative.
\begin{example}\label{eg:barytra}
We compute iterated barycentric transformations of the polygon~$P$ from Proposition~\ref{prop:countereg} to determine its strict type. Consider~$B(P) = \conv\set{(10, -163), (17, 70), (2, 225), (-11, -39)}$. It contains the origin in its strict interior, so it is Fano. However, the barycentre of its dual is not the origin; thus,~$B(P)$ is not \KE. Next,~$B^2(P) = \conv\set{(9, -31), (19, 295), (-3, 62), (-1, -202)}$. Since it still contains the origin in its strict interior, it is Fano. But now,~$B^3(P) = \conv\set{(7, 66), (16, 357), (-1,-35), (8,-233)}$. This polygon does not contain the origin in its strict interior; thus,~$B^3(P)$ is not Fano. We can conclude that~$P$ is of strict type~$B_2$.
\end{example}
\subsection{The weight systems of quadrilaterals with barycentre as the origin}
\begin{proposition}\label{prop:KE_quad_weights}
Let~$P \subset \NQ$ be a \KE\ quadrilateral. Then its dual polygon~$P^*$ is a quadrilateral with weight system
\[
W^* = \begin{pmatrix}
 \lambda_1 & \lambda_2 & 0 & \lambda_4 \\
 0 & \mu_2 & \mu_3 & \mu_4
\end{pmatrix},
\]
for some positive integers~$\lambda_1, \lambda_2, \lambda_4, \mu_2, \mu_4$ and negative integer~$\mu_3$ which satisfy
\begin{align}
\label{eq:weights1} (\lambda_1 - \lambda_2) f \cdot \mu_3^2 &= (\mu_3 - \mu_2) g \cdot \lambda_1^2 \\
\label{eq:weights2} (\lambda_1 - \lambda_4) f \cdot \mu_3^2 &= (\mu_3 - \mu_4) g \cdot \lambda_1^2,
\end{align}
where~$f = \lambda_1 + \lambda_2 + \lambda_4$ and~$g = \mu_2 + \mu_3 + \mu_4$.
\end{proposition}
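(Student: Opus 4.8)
The plan is as follows. Since $P$ is a Fano quadrilateral, its dual $P^{*}$ is a bounded polygon with the origin in its strict interior, having one vertex for each of the four edges of $P$; hence $P^{*}$ is a quadrilateral, and I write its vertices as $u_{1},u_{2},u_{3},u_{4}\in\MQ$ in anticlockwise order (explicitly, $u_{i}$ is the primitive inner normal of the $i$-th edge of $P$ divided by its height). The two diagonals of $P^{*}$ cut it into four triangles; in the main case the origin lies in the relative interior of exactly one of them, and after a cyclic relabelling of the vertices I may assume it is the triangle incident to the edge $u_{1}u_{2}$, so that $\vec{0}\in\strint\conv\{u_{1},u_{2},u_{3}\}\cap\strint\conv\{u_{1},u_{2},u_{4}\}$. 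The remaining, degenerate configuration — where the origin lies on a diagonal of $P^{*}$, equivalently two opposite edges of $P$ are parallel — must be excluded, since even for a parallelogram $P^{*}$ admits no weight matrix of the displayed shape; I would note this explicitly.

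Next I build the two rows of $W^{*}$. From $\vec{0}\in\strint\conv\{u_{1},u_{2},u_{4}\}$ the barycentric coordinates of the origin give $\lambda_{1}u_{1}+\lambda_{2}u_{2}+\lambda_{4}u_{4}=\vec{0}$ with positive rationals $\lambda_{i}$, which I clear to positive integers; this is the first row $(\lambda_{1},\lambda_{2},0,\lambda_{4})$. Similarly $\vec{0}\in\strint\conv\{u_{1},u_{2},u_{3}\}$ yields $\alpha_{1}u_{1}+\alpha_{2}u_{2}+\alpha_{3}u_{3}=\vec{0}$ with positive integers $\alpha_{i}$. The combination $\alpha_{1}(\lambda_{1},\lambda_{2},0,\lambda_{4})-\lambda_{1}(\alpha_{1},\alpha_{2},\alpha_{3},0)$ kills the first coordinate and gives a second weight $(0,\mu_{2},\mu_{3},\mu_{4})$ with $\mu_{3}=-\lambda_{1}\alpha_{3}<0$ and $\mu_{4}=\alpha_{1}\lambda_{4}>0$. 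The one subtle sign is $\mu_{2}=\alpha_{1}\lambda_{2}-\lambda_{1}\alpha_{2}>0$: writing the positive representatives of the barycentric coordinates as $2\times 2$ determinants of the $u_{i}$ and applying the Plücker relation $p_{12}p_{34}-p_{13}p_{24}+p_{14}p_{23}=0$ among the maximal minors of the matrix with columns $u_{2},u_{3},u_{4},u_{1}$, one finds $\mu_{2}$ is a positive multiple of $\det(u_{1}\ u_{2})\det(u_{3}\ u_{4})$, which is positive since consecutive vertices of a convex polygon with the origin in its interior form a positively oriented pair. The two rows have different zero patterns, so $W^{*}$ has rank $2$; this step uses only convexity, not the \KE\ hypothesis.

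Finally I impose that the barycentre of $P^{*}$ is the origin, which is where \eqref{eq:weights1} and \eqref{eq:weights2} come from. As barycentres commute with linear maps, I pass to the basis in which $u_{2}=(1,0)^{t}$ and $u_{4}=(0,1)^{t}$; the two weight relations then give $u_{1}=(-\lambda_{2}/\lambda_{1},-\lambda_{4}/\lambda_{1})^{t}$ and $u_{3}=(-\mu_{2}/\mu_{3},-\mu_{4}/\mu_{3})^{t}$, and in these coordinates the ordering $u_{2},u_{3},u_{4},u_{1}$ is anticlockwise with all four shoelace terms positive. Applying the standard polygon-centroid formula, setting both coordinates of the centroid to zero, and then clearing denominators and simplifying with $f=\lambda_{1}+\lambda_{2}+\lambda_{4}$ and $g=\mu_{2}+\mu_{3}+\mu_{4}$, the $x$-coordinate equation collapses to $(\lambda_{1}-\lambda_{2})f\mu_{3}^{2}=(\mu_{3}-\mu_{2})g\lambda_{1}^{2}$, which is \eqref{eq:weights1}; the $y$-coordinate equation follows from it by the symmetry $\lambda_{2}\leftrightarrow\lambda_{4}$, $\mu_{2}\leftrightarrow\mu_{4}$ (which fixes $f$, $g$, $\lambda_{1}$, $\mu_{3}$), giving \eqref{eq:weights2}. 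The main obstacles are pinning down the sign of $\mu_{2}$ — resolved by the Plücker relation — and the sign bookkeeping in the centroid computation, together with isolating and dismissing the degenerate case.
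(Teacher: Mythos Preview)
Your proposal is correct and arrives at the same equations, but the route differs from the paper's in two respects.

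First, you do substantially more work establishing that a weight matrix with the stated sign pattern exists at all. The paper simply takes $W^*$ as given and proceeds; you locate the origin in one of the four small triangles cut out by the two diagonals, read off two positive barycentric relations, combine them, and then pin down the sign of $\mu_2$ via the Pl\"ucker relation among the $2\times 2$ minors of $(u_1,u_2,u_3,u_4)$. This is genuinely additional content, and your observation about the degenerate case---when $P$ has a pair of parallel opposite edges, so that the origin lies on a diagonal of $P^*$---is apt: for $P$ the unit square, no cyclic labelling yields a first row $(\lambda_1,\lambda_2,0,\lambda_4)$ with $\lambda_1>0$, so the proposition as stated tacitly excludes this case. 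The paper does not flag this.

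Second, for the derivation of \eqref{eq:weights1}--\eqref{eq:weights2} you fix the basis $u_2=(1,0)^t$, $u_4=(0,1)^t$ and apply the shoelace centroid formula directly. The paper instead splits $P^*$ along the diagonal $u_2u_4$ into triangles $T=\conv\{u_1,u_2,u_4\}$ and $T'=\conv\{u_2,u_3,u_4\}$, observes that the barycentre condition becomes $\Vol(T)\,b_T+\Vol(T')\,b_{T'}=0$, uses Lemma~\ref{lemma:triangle_vols} to express $\Vol(T):\Vol(T')$ as $-f\mu_3:\lambda_1 g$, and reads off a new weight vector which it then writes as a $\QQ$-linear combination of the two rows of $W^*$; comparing the second and fourth entries gives \eqref{eq:weights1} and \eqref{eq:weights2}. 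The two computations are close cousins---your shoelace sum is essentially the same triangle decomposition written out---but the paper's version avoids the coordinate bookkeeping and makes the role of the ratio $-f\mu_3:\lambda_1 g$ transparent, at the cost of being less explicit about why the signs line up.
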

Before we prove Proposition~\ref{prop:KE_quad_weights}, we must first prove the following lemma about triangles.
\begin{lemma}\label{lemma:triangle_vols}
Let~$T \subset \NQ$ be a triangle with vertices~$\vec{v}_0, \vec{v}_1, \vec{v}_2$ and weights~$(\lambda_0, \lambda_1, \lambda_2)$, so that~$\sum_{i=0}^2 \lambda_i \vec{v}_i = \vec{0}$. Label the edges of~$T$ as~$E_0, E_1, E_2$ so that~$\vec{v}_i \not\in E_i$ for~$i=0,1,2$. Suppose that the weights are~\emph{reduced}, i.e.\ $\gcd(\lambda_0, \lambda_1, \lambda_2) = 1$. Then, for all~$i=0,1,2$, the volume of~$\Delta_{E_i}$ is~$k|\lambda_i|$, for some positive~$k \in \QQ$ not depending on~$i$.
\end{lemma}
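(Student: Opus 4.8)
The plan is to convert the statement into an identity about $2\times 2$ determinants. Write $A \coloneqq \det(\vec{v}_1,\vec{v}_2)$, $B \coloneqq \det(\vec{v}_2,\vec{v}_0)$, and $C \coloneqq \det(\vec{v}_0,\vec{v}_1)$. Since $E_i$ is the edge spanned by the two vertices other than $\vec{v}_i$, the triangle $\Delta_{E_i}$ with base $E_i$ and apex the origin is $\conv\{\vec{0},\vec{v}_j,\vec{v}_k\}$, whose normalised volume is $|\det(\vec{v}_j,\vec{v}_k)|$; so the three volumes in question are exactly $|A|$, $|B|$, $|C|$. Thus it suffices to show that $(A,B,C)$ is a nonzero rational multiple of $(\lambda_0,\lambda_1,\lambda_2)$, and then take $k$ to be the absolute value of that multiple.

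To extract the proportionality, I would apply the linear functional $\vec{x}\mapsto \det(\vec{x},\vec{v}_j)$ to the relation $\lambda_0\vec{v}_0+\lambda_1\vec{v}_1+\lambda_2\vec{v}_2=\vec{0}$, once for each $j=0,1,2$. Using bilinearity, antisymmetry, and $\det(\vec{v}_j,\vec{v}_j)=0$, this produces the three identities $\lambda_2 B=\lambda_1 C$, $\lambda_0 C=\lambda_2 A$, $\lambda_1 A=\lambda_0 B$. Read together, these say precisely that the cross product $(\lambda_0,\lambda_1,\lambda_2)\times(A,B,C)$ vanishes in $\QQ^3$, i.e.\ the two vectors are parallel. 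The vector $(\lambda_0,\lambda_1,\lambda_2)$ is nonzero because the weights are reduced, and $(A,B,C)$ is nonzero because $T$ is full-dimensional: indeed $A+B+C=\det(\vec{v}_1-\vec{v}_0,\vec{v}_2-\vec{v}_0)\neq 0$. Hence $(A,B,C)=\kappa\,(\lambda_0,\lambda_1,\lambda_2)$ for a unique $\kappa\in\QQ\setminus\{0\}$, so $\Delta_{E_i}$ has volume $|\lambda_i|\,|\kappa|$ for every $i$, and $k\coloneqq|\kappa|$ is the required positive rational, manifestly independent of $i$.

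I do not expect a real obstacle here; the computation is essentially one line once set up. The only points requiring care are (a) getting the signs right in the three determinant identities, which is routine but easy to botch, and (b) justifying that $\kappa\neq 0$ — equivalently that no $\Delta_{E_i}$ is degenerate — for which non-degeneracy of $T$ (via $A+B+C=\det(\vec{v}_1-\vec{v}_0,\vec{v}_2-\vec{v}_0)\neq0$) is exactly what is needed. I would also note in passing that the hypothesis $\gcd(\lambda_0,\lambda_1,\lambda_2)=1$ is not used beyond guaranteeing $(\lambda_0,\lambda_1,\lambda_2)\neq\vec{0}$ and making the weight vector (hence the constant $k$) canonical.
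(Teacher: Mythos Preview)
Your proof is correct and is essentially the paper's approach made explicit: the paper simply cites Cramer's rule (via~\cite{conrads}) for the proportionality between the weights and the sub-determinants, whereas you write out those determinant identities by hand using the functionals $\vec{x}\mapsto\det(\vec{x},\vec{v}_j)$. The cross-product formulation and the non-degeneracy check via $A+B+C=\det(\vec{v}_1-\vec{v}_0,\vec{v}_2-\vec{v}_0)\neq 0$ are nice touches that make your argument fully self-contained, but mathematically the content is the same.
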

\begin{proof}
As in the remark under Definition~2.3 in~\cite{conrads}, this result also follows in the non-integral case using Cramer's rule.
\end{proof}
We are now ready to prove the main result of this subsection.
\begin{proof}[Proof of Proposition~\ref{prop:KE_quad_weights}]
Let~$\ell = \gcd(\lambda_1, \lambda_2, \lambda_4)$ and~$m = \gcd(\mu_2, \mu_3, \mu_4)$. Consider~\eqref{eq:weights1}. Since~$\ell$ divides~$f$, we see that~$\ell^2$ divides the left-hand side and the right-hand side. Further, since~$m$ divides~$g$, we see that~$m^2$ also divides both the left-hand and right-hand sides. The analogous statement holds for~\eqref{eq:weights2}. Thus, we may divide the~$\lambda_i$ by~$\ell$ and the~$\mu_j$ by~$m$. Hence, we may assume without loss of generality that~$\ell = m = 1$.

Let~$u_1, u_2, u_3, u_4 \in \MQ$ be the vertices of~$P^*$, given in anticlockwise order and aligned so that~$W^* (u_1, u_2, u_3, u_4) = (0, 0)$. Then we obtain two triangles~$T = \conv\set{u_1, u_2, u_4}$ and~$T' = \conv\set{u_2, u_3, u_4}$ which partition~$P^*$. The barycentres of~$T$ and~$T'$ are~$b = \frac13(u_1 + u_2 + u_4)$ and~$b' = \frac13(u_2 + u_3 + u_4)$, respectively.

Now,~$T$ has weights~$(\lambda_1, \lambda_2, \lambda_4)$ and~$T'$ has weights~$(\mu_2, \mu_3, \mu_4)$. Let~$E_i = \conv\setcond{u_n}{n \neq i}$ and~$E'_j = \conv\setcond{u_n}{n \neq j}$, for~$i=1,2,4$ and~$j=2,3,4$. By Lemma~\ref{lemma:triangle_vols}, the triangles~$\Delta_{E_i}$ have volumes~$k_T \lambda_i$, for~$i=1,2,4$, and the triangles~$\Delta_{E'_i}$ have volumes~$k_{T'} |\mu_j|$, for~$j=2,3,4$, where~$k_T, k_{T'} \in \QQ$ are positive. But now,~$\Delta_{E_1} = \Delta_{E'_3}$. In particular,~$\Vol(\Delta_{E_1}) = \Vol(\Delta_{E'_3})$. It follows that the ratio between~$k_1$ and~$k_2$ is~$(-\mu_3 : \lambda_1)$. Therefore, the ratio between the volumes of~$T$ and~$T'$ is~$(-f\mu_3 : \lambda_1 g)$.

Since the barycentre of~$P^*$ is~$0$, it follows that~$\Vol(T)b + \Vol(T')b' = 0$. Plugging in our expressions for the volumes and barycentres of~$T$ and~$T'$, we obtain that~$-f\mu_3 \cdot u_1 + (\lambda_1 g - f\mu_3) \cdot u_2 + \lambda_1 g \cdot u_3 + (\lambda_1 g - f\mu_3) \cdot u_4 = 0$. Equivalently,~$(-f\mu_3, \lambda_1 g - f\mu_3, \lambda_1 g, \lambda_1 g - f\mu_3)$ is a weight for~$P^*$. So, it must be expressable as a~$\QQ$-linear combination of the rows of~$W^*$:
\begin{equation}\label{eq:lin_comb}
(-f\mu_3, \lambda_1 g - f\mu_3, \lambda_1 g, \lambda_1 g - f\mu_3) = a \cdot (\lambda_1, \lambda_2, 0, \lambda_4) + b \cdot (0, \mu_2, \mu_3, \mu_4),
\end{equation}
for some~$a,b \in \QQ$. Comparing the first coordinates in~\eqref{eq:lin_comb} gives~$a = -f\mu_3 / \lambda_1$ and comparing the third coordinates in~\eqref{eq:lin_comb} gives~$b = \lambda_1 g / \mu_3$. Then, obtaining~\eqref{eq:weights1} is simply a matter of comparing the second coordinates in~\eqref{eq:lin_comb}, substituting in the expressions of~$a$ and~$b$, and rearranging. Equation~\eqref{eq:weights2} is obtained similarly from comparing the fourth coordinates in~\eqref{eq:lin_comb}.
\end{proof}
\begin{remark}\label{remark:KE_quad}
We can create a nice parameterisation of the weight matrix in Proposition~\ref{prop:KE_quad_weights}. First, we may assume that each row of the weight matrix is reduced, i.e.\ $\gcd(\lambda_1, \lambda_2, \lambda_4) = 1$ and~$\gcd(\mu_2, \mu_3, \mu_4) = 1$. It follows from Equations~\eqref{eq:weights1} and~\eqref{eq:weights2} that~$\lambda_1^2$ divides~$f \cdot \mu_3^2$ and~$\mu_3^2$ divides~$g \cdot \lambda_1^2$. So, let~$a = \gcd(\lambda_1, \mu_3)$ and let~$\ell$ and~$m$ be coprime integers such that~$\lambda_1 = a\ell$ and~$\mu_3 = am$. Now, let~$d = \gcd(f, g)$ and let~$r$ and~$s$ be coprime integers such that~$f = d\ell^2 s$ and~$g = dm^2 r$. Finally, let~$b = \gcd(\lambda_1 - \lambda_2, \mu_3 - \mu_2)$ and~$c = \gcd(\lambda_1 - \lambda_4, \mu_3 - \mu_4)$. We can now express all~$\lambda_i$ and~$\mu_j$ in terms of the parameters~$a, b, c, d, \ell, m, r, s$. The weight matrix looks like the following.
\[
W^* = \begin{pmatrix}
 a\ell & a\ell + br & 0 & a\ell + cr \\
 0 & am + bs & am & am + cs
\end{pmatrix}.
\]

Using the identities~$f = \lambda_1 + \lambda_2 + \lambda_4$ and~$g = \mu_2 + \mu_3 + \mu_4$, we eventually obtain the equations
\begin{align}
\label{eq:param1} b + c &= -d\ell m \\
\label{eq:param2} 3a &= d(mr + \ell s).
\end{align}
We can use this parameterisation to search for further examples of non-symmetric \KE\ Fano polygons.
\end{remark}
\begin{example}
Let us fix~$a = 48$. By~\eqref{eq:param2},~$d$ divides~$144$. We choose~$d = 36$. By~\eqref{eq:param1},~$b+c$ must be divisible by~$36$. Choose~$b = 19$ and~$c = 17$. Referring back to~\eqref{eq:param1}, we see that~$36 = -36\ell m$. Thus,~$\ell = 1$ and~$m = -1$. Finally,~\eqref{eq:param2} gives~$4 = -r + s$. We may let~$r = 1$ and~$s = 5$.

This gives the following weight matrix
\[
W^* = \begin{pmatrix}
 48 & 67 & 0 & 65 \\
 0 & 47 & -48 & 37
\end{pmatrix},
\]
which corresponds to a quadrilateral with barycentre zero. Taking the dual polygon and restricting to the lattice spanned by its vertices, we obtain the \KE\ Fano quadrilateral from Proposition~\ref{prop:countereg}.
\end{example}
\subsection{Non-symmetric K{\"a}hler--Einstein polygons coming from symmetric polygons}\label{subsec:nonsym_from_3sym}
In the previous subsection, we showed one way to construct non-symmetric \KE\ polygons which are not triangles. By construction, all previous examples were quadrilaterals. In this subsection, we will construct a different type of non-symmetric \KE\ polygon. We first illustrate the main idea of the construction with the following example.
\begin{example}\label{eg:nonsym_KE_tri}
Take the triangle~$P$ with vertices~$(-1,-1)$,~$(1,0)$, and~$(0,1)$. This is \KE\ and symmetric, and has corresponding toric variety~$X_P = \PP^2$. Consider~$P$ with respect to the lattice~$N + \frac{1}{9}(1,2)\ZZ$. This is isomorphic to the triangle~$P'$ with vertices~$(-5,1)$,~$(1,-2)$, and~$(4,1)$, with respect to~$N$, and whose corresponding toric variety is~$X_{P'} = \PP^2 / \ZZ_9$. The weights of~$P'$ are the same as those of~$P$, so~$P'$ is still \KE. However, the automorphism group of~$P'$ is now only generated by a reflection. Thus,~$P'$ is a \KE\ triangle which is not symmetric.
\end{example}
\begin{remark}
Note that a similar phenomenon was discussed in~\cite[Example~1.7]{on_KEs}. They looked at a triangle~$P$ with~$X_P = \PP^2 / \ZZ_{11}$. In this case,~$P$ had a trivial automorphism group.
\end{remark}
We want to find more examples of non-symmetric \KE\ Fano polygons. To do this, we try to extend what we did in Example~\ref{eg:nonsym_KE_tri} to other symmetric polygons. In particular, we consider other symmetric polygons with respect to finer lattices in an attempt to destroy the symmetry but keep the \KE\ property. Of course, if a polygon is centrally symmetric with respect to one lattice, it will be centrally symmetric with respect to any other lattice. Thus, we restrict our attention to~$3$-symmetric polygons.
\begin{proposition}\label{prop:KE_nonsym_from_3sym}
Let~$h,k$ be integers such that~$h,k \ge 2$ and~$k$ is coprime to~$h$,~$h-1$, and~$2h-1$. Then the polygon
\[
P_{h,k} \coloneqq \conv\begin{pmatrix}
 -hk & (1-h)k & (2h-1)k & (2h-1)k & (1-h)k & -hk \\
 1-h & -h & -h & 1-h & 2h-1 & 2h-1
\end{pmatrix}
\]
is a \KE\ Fano polygon which is not symmetric. Moreover,~$P_{h,k}$ is of type~$B_{\infty}$. If we further suppose that~$k \ge 2h-1$, then~$P_{h,k}$ is not minimal.
\end{proposition}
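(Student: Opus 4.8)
The plan is to establish the five assertions in turn, the organising idea being that $P_{h,k}$ is the image of a $3$-symmetric Fano polygon under the diagonal map $\phi = \left(\begin{smallmatrix} k & 0 \\ 0 & 1 \end{smallmatrix}\right)$. First I would record the combinatorics of $P_{h,k}$: labelling the six vertices $\vec{v}_1, \dots, \vec{v}_6$ in the listed order, the edges $E_i = \conv\set{\vec{v}_i, \vec{v}_{i+1}}$ have lattice lengths $(1, (3h-2)k, 1, 3h-2, k, 3h-2)$, heights $((2h-1)k, h, (2h-1)k, hk, 2h-1, hk)$, and primitive inner normals $(1,k)^t, (0,1)^t, (-1,0)^t, (-1,-k)^t, (0,-1)^t, (1,0)^t$. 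Given this, Fano-ness is immediate: all heights are positive, so $\vec{0} \in \strint P_{h,k}$, and each vertex is primitive because $\gcd(h,h-1) = \gcd(h,2h-1) = \gcd(h-1,2h-1) = 1$ and $k$ is coprime to $h$, $h-1$, $2h-1$. I would also note here that, since one of $h$, $h-1$ is even and $k$ is coprime to both, $k$ must be odd; this fact is used below.

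For the \KE\ property I would push forward from $k = 1$. The polygon $P_{h,1}$ is Fano by the same check and is preserved by the order-$3$ rotation $G = \left(\begin{smallmatrix}-1 & -1\\ 1 & 0\end{smallmatrix}\right)$ (which cyclically permutes $\vec{v}_1\mapsto\vec{v}_3\mapsto\vec{v}_5$ and $\vec{v}_2\mapsto\vec{v}_4\mapsto\vec{v}_6$), hence $P_{h,1}$ is $3$-symmetric, hence symmetric, hence \KE\ by Theorem~\ref{thm:syms_are_KE}; so $P_{h,1}^*$ has barycentre $\vec{0}$. Since $P_{h,k} = \phi(P_{h,1})$ we have $P_{h,k}^* = (\phi^t)^{-1}(P_{h,1}^*)$, and the barycentre commutes with linear maps, so $P_{h,k}^*$ also has barycentre $\vec{0}$ and $P_{h,k}$ is \KE. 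Non-symmetry is handled by the edge lengths: a non-trivial rotation in $\Aut(P_{h,k})$ would cyclically shift the edge sequence, forcing equalities among $(1, (3h-2)k, 1, 3h-2, k, 3h-2)$ that fail for $h, k \ge 2$; so $\Aut(P_{h,k})$ contains no non-trivial rotation, hence fixes a whole line, and $P_{h,k}$ is not symmetric.

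For type $B_\infty$ the plan is to show the barycentric orbit falls into the class of centrally symmetric Fano polygons and stays there. Computing the six consecutive vertex sums and dividing by their gcds (which, using that $k$ is odd, are $2h-1, h, 2h-1, h, 2h-1, h$) yields
\[
B(P_{h,k}) = \conv\set{(-k,-1)^t, (k,-2)^t, (2k,-1)^t, (k,1)^t, (-k,2)^t, (-2k,1)^t} =: Q_k,
\]
which is independent of $h$, visibly centrally symmetric, and has primitive vertices, hence is Fano. I would then prove the general fact that if $R$ is a centrally symmetric Fano polygon then so is $B(R)$: central symmetry because $-I$ is unimodular, so negation does not change gcds; primitive vertices by construction; and full-dimensionality with $\vec{0}$ in the interior because the midpoints of the edges of $R$ have direction vectors sweeping once around the circle, hence so do the primitive generators of $B(R)$. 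Induction then gives that $B^j(P_{h,k}) = B^{j-1}(Q_k)$ is Fano for every $j \ge 1$, so $P_{h,k}$ is of type $B_\infty$.

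Finally, for non-minimality when $k \ge 2h - 1$: the top edge $E_5$ has length $k$ and height $2h-1$, so it is long once $k \ge 2h-1$; its inner normal $(0,-1)^t$ satisfies $\hmin = -(2h-1)$ and $\hmax = h$ on $P_{h,k}$, so $|\hmin| = 2h-1 > h = \hmax$ for $h \ge 2$, and Lemma~\ref{lemma:min_poly_condition} yields non-minimality. The step I expect to be the main obstacle is type $B_\infty$: the cleanest route is the one above, but it requires spotting that $B(P_{h,k})$ is centrally symmetric (and $h$-independent) and then verifying that the barycentric transformation keeps $\vec{0}$ in the interior within that class; the other four assertions are routine once the edge data and the push-forward picture are in place.
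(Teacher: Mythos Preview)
Your proposal is correct and follows essentially the same line as the paper: push forward the \KE\ property from the $3$-symmetric $P_{h,1}$ via the diagonal map, rule out symmetry by comparing edge lengths, compute $B(P_{h,k})$ and observe it is the centrally symmetric hexagon with vertices $\pm(k,-2),\pm(2k,-1),\pm(k,1)$ (exactly the paper's computation), and use the top edge $E_5$ together with Lemma~\ref{lemma:min_poly_condition} for non-minimality. The only difference is in the $B_\infty$ step: the paper cites \cite[Theorem~1.6]{barycentric_transformations} to conclude from central symmetry of $B(P_{h,k})$, whereas you prove directly that the barycentric transformation preserves the class of centrally symmetric Fano polygons; both arguments are valid, and yours has the small advantage of being self-contained.
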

\begin{proof}
We first show that the polygon is \KE. The polygon~$P_{h,1}$ is~$3$-symmetric, and thus \KE. The polygon~$P_{h,k}$ is isomorphic to~$P_{h,1}$ with respect to the lattice~$N + (1/k, 0)\ZZ$. The weights of a polygon are the same, regardless of which lattice it is considered on. In particular,~$P_{h,k}$ has the same weights are~$P_{h,1}$. Thus,~$P_{h,k}$ is also \KE.

Next, given the assumption that~$k$ is coprime to~$h$,~$h-1$, and~$2h-1$, it easily follows that the vertices of~$P_{h,k}$ are primitive. Thus, since the origin is still contained in the strict interior, the polygon is Fano. It remains to show that~$P_{h,k}$ is not symmetric. To do this, we compare the lengths of its edges. Starting from the bottom-left edge of~$P_{h,k}$ and going anticlockwise, the lengths are~$1$,~$(3h-2)k$,~$1$,~$3h-2$,~$k$, and~$3h-2$. Since~$k \ge 2$, the lengths don't repeat three times. Thus,~$P_{h,k}$ is not~$3$-symmetric. Since it's clearly not centrally symmetric, we may conclude that the polygon isn't symmetric.

Now, we determine the type of~$P_{h,k}$. We compute its barycentric transformation and obtain that~$B(P_{h,k})$ has vertices~$\pm(k,-2)$,~$\pm(2k,-1)$, and~$\pm(k,1)$. In particular,~$B(P_{h,k})$ is centrally symmetric. Thus, by~\cite[Theorem~1.6]{barycentric_transformations},~$P_{h,k}$ is of type~$B_{\infty}$.

Finally, we show that if~$k \ge 2h-1$, then the polygon is not minimal. The bottom edge of~$P_{h,k}$ has length~$(3h-2)k$ and height~$h$, so it is a long edge. The top edge has length~$k$ and height~$2h-1$. Since~$k \ge 2h-1$, by assumption, it follows that it is also a long edge. Thus,~$P_{h,k}$ is not minimal.
\end{proof}
So, we have found an infinite number of non-symmetric, non-triangle \KE\ Fano polygons.
\subsection*{Acknowledgements}
I would like to thank my supervisors Alexander Kasprzyk and Johannes Hofscheier for their patient guidance and for the many helpful discussions we had throughout this project.
\bibliographystyle{plain}
\bibliography{References}

\begin{thebibliography}{10}

\bibitem{mirror_symmetry_orbifolds}
Mohammad Akhtar, Tom Coates, Alessio Corti, Liana Heuberger, Alexander~M. Kasprzyk, Alessandro Oneto, Andrea Petracci, Thomas Prince, and Ketil Tveiten.
\newblock Mirror symmetry and the classification of orbifold del {P}ezzo surfaces.
\newblock {\em Proc. Amer. Math. Soc.}, 144(2):513--527, 2016.
\newblock \doi{10.1090/proc/12876}.

\bibitem{minkowski_mutation}
Mohammad Akhtar, Tom Coates, Sergey Galkin, and Alexander~M. Kasprzyk.
\newblock Minkowski polynomials and mutations.
\newblock {\em SIGMA Symmetry Integrability Geom. Methods Appl.}, 8:Paper 094, 17, 2012.
\newblock \doi{10.3842/SIGMA.2012.094}.

\bibitem{batyrev_reflexive}
Victor~V. Batyrev.
\newblock Dual polyhedra and mirror symmetry for {C}alabi-{Y}au hypersurfaces in toric varieties.
\newblock {\em J. Algebraic Geom.}, 3(3):493--535, 1994.

\bibitem{bat_sym}
Victor~V. Batyrev and Elena~N. Selivanova.
\newblock Einstein-{K}\"{a}hler metrics on symmetric toric {F}ano manifolds.
\newblock {\em J. Reine Angew. Math.}, 512:225--236, 1999.
\newblock \doi{10.1515/crll.1999.054}.

\bibitem{YTD_recent1}
Robert~J. Berman, S\'{e}bastien Boucksom, and Mattias Jonsson.
\newblock A variational approach to the {Y}au--{T}ian--{D}onaldson conjecture.
\newblock {\em J. Amer. Math. Soc.}, 34(3):605--652, 2021.
\newblock \doi{10.1090/jams/964}.

\bibitem{K-moduli3}
Harold Blum, Yuchen Liu, and Chenyang Xu.
\newblock Openness of {K}-semistability for {F}ano varieties.
\newblock {\em Duke Math. J.}, 171(13):2753--2797, 2022.
\newblock \doi{10.1215/00127094-2022-0054}.

\bibitem{magma}
Wieb Bosma, John Cannon, and Catherine Playoust.
\newblock The {M}agma algebra system. {I}. {T}he user language.
\newblock volume~24, pages 235--265. 1997.
\newblock \doi{10.1006/jsco.1996.0125}.

\bibitem{CaveyKutas}
Daniel Cavey and Edwin Kutas.
\newblock Classification of minimal polygons with specified singularity content.
\newblock In {\em Interactions with lattice polytopes}, volume 386 of {\em Springer Proc. Math. Stat.}, pages 115--134. Springer, Cham, 2022.
\newblock \doi{10.1007/978-3-030-98327-7\_5}.

\bibitem{YTD_conj1}
Xiuxiong Chen, Simon Donaldson, and Song Sun.
\newblock K\"{a}hler-{E}instein metrics on {F}ano manifolds. {I}: {A}pproximation of metrics with cone singularities.
\newblock {\em J. Amer. Math. Soc.}, 28(1):183--197, 2015.
\newblock \doi{10.1090/S0894-0347-2014-00799-2}.

\bibitem{YTD_conj2}
Xiuxiong Chen, Simon Donaldson, and Song Sun.
\newblock K\"{a}hler-{E}instein metrics on {F}ano manifolds. {II}: {L}imits with cone angle less than {$2\pi$}.
\newblock {\em J. Amer. Math. Soc.}, 28(1):199--234, 2015.
\newblock \doi{10.1090/S0894-0347-2014-00800-6}.

\bibitem{mirror_symmetric_fano_manifolds}
Tom Coates, Alessio Corti, Sergey Galkin, Vasily Golyshev, and Alexander~M. Kasprzyk.
\newblock Mirror symmetry and {F}ano manifolds.
\newblock In {\em European {C}ongress of {M}athematics}, pages 285--300. Eur. Math. Soc., Z\"{u}rich, 2013.
\newblock \doi{10.4171/120}.

\bibitem{conrads}
Heinke Conrads.
\newblock Weighted projective spaces and reflexive simplices.
\newblock {\em Manuscripta Math.}, 107(2):215--227, 2002.
\newblock \doi{10.1007/s002290100235}.

\bibitem{13sings}
Alessio Corti and Liana Heuberger.
\newblock Del {P}ezzo surfaces with {$\frac{1}{3}(1,1)$} points.
\newblock {\em Manuscripta Math.}, 153(1-2):71--118, 2017.
\newblock \doi{10.1007/s00229-016-0870-y}.

\bibitem{kstab_donaldson}
Simon Donaldson.
\newblock Scalar curvature and stability of toric varieties.
\newblock {\em J. Differential Geom.}, 62(2):289--349, 2002.
\newblock \doi{10.4310/jdg/1090950195}.

\bibitem{on_KEs}
DongSeon Hwang and Yeonsu Kim.
\newblock On {K}{\"a}hler--{E}instein {F}ano polygons.
\newblock \arxiv{2012.13373}{math.AG}, 2020.

\bibitem{barycentric_transformations}
DongSeon Hwang and Yeonsu Kim.
\newblock On barycentric transformations of {F}ano polytopes.
\newblock {\em Bull. Korean Math. Soc.}, 58(5):1247--1260, 2021.
\newblock \doi{10.4134/BKMS.b200928}.

\bibitem{mutation_deformations}
Nathan~Owen Ilten.
\newblock Mutations of {L}aurent polynomials and flat families with toric fibers.
\newblock {\em SIGMA Symmetry Integrability Geom. Methods Appl.}, 8:Paper 047, 7, 2012.
\newblock \doi{10.3842/SIGMA.2012.047}.

\bibitem{K-moduli2}
Chen Jiang.
\newblock Boundedness of {$\Bbb Q$}-{F}ano varieties with degrees and alpha-invariants bounded from below.
\newblock {\em Ann. Sci. \'{E}c. Norm. Sup\'{e}r. (4)}, 53(5):1235--1248, 2020.
\newblock \doi{10.24033/asens.2445}.

\bibitem{ARTICLE:SING_CONT}
Alexander~M. Kasprzyk and Mohammad Akhtar.
\newblock Singularity content.
\newblock \arxiv{1401.5458}{math.AG}, 2014.

\bibitem{FanoPoly}
Alexander~M. Kasprzyk and Benjamin Nill.
\newblock Fano polytopes.
\newblock In {\em Strings, gauge fields, and the geometry behind}, pages 349--364. World Sci. Publ., Hackensack, NJ, 2013.

\bibitem{minimality}
Alexander~M. Kasprzyk, Benjamin Nill, and Thomas Prince.
\newblock Minimality and mutation-equivalence of polygons.
\newblock {\em Forum Math. Sigma}, 5:Paper No. e18, 48, 2017.
\newblock \doi{10.1017/fms.2017.10}.

\bibitem{YTD_recent2}
Chi Li, Gang Tian, and Feng Wang.
\newblock The uniform version of {Y}au--{T}ian--{D}onaldson conjecture for singular {F}ano varieties.
\newblock {\em Peking Math. J.}, 5(2):383--426, 2022.
\newblock \doi{10.1007/s42543-021-00039-5}.

\bibitem{K-moduli1}
Chi Li, Xiaowei Wang, and Chenyang Xu.
\newblock On the proper moduli spaces of smoothable {K}\"{a}hler-{E}instein {F}ano varieties.
\newblock {\em Duke Math. J.}, 168(8):1387--1459, 2019.
\newblock \doi{10.1215/00127094-2018-0069}.

\bibitem{mackiw1996finite}
George Mackiw.
\newblock Finite groups of {$2\times 2$} integer matrices.
\newblock {\em Math. Mag.}, 69(5):356--361, 1996.
\newblock \doi{10.2307/2691281}.

\bibitem{nill_nonsym_KE}
Benjamin Nill and Andreas Paffenholz.
\newblock Examples of {K}\"{a}hler-{E}instein toric {F}ano manifolds associated to non-symmetric reflexive polytopes.
\newblock {\em Beitr. Algebra Geom.}, 52(2):297--304, 2011.
\newblock \doi{10.1007/s13366-011-0041-y}.

\bibitem{kstab_tian}
Gang Tian.
\newblock K\"{a}hler-{E}instein metrics with positive scalar curvature.
\newblock {\em Invent. Math.}, 130(1):1--37, 1997.
\newblock \doi{10.1007/s002220050176}.

\bibitem{YTD_conj3}
Gang Tian.
\newblock K-stability and {K}\"{a}hler-{E}instein metrics.
\newblock {\em Comm. Pure Appl. Math.}, 68(7):1085--1156, 2015.
\newblock \doi{10.1002/cpa.21578}.

\end{thebibliography}
\end{document}